\newcommand{\stkout}[1]{\ifmmode\text{\sout{\ensuremath{#1}}}\else\sout{#1}\fi}
\newtheorem{theorem}{Theorem}[section]
\newtheorem{remark}[theorem]{Remark}
\newtheorem{assumption}[theorem]{Assumption}
\newtheorem{lemma}[theorem]{Lemma}
\newtheorem{proposition}[theorem]{Proposition}
\newtheorem{definition}[theorem]{Definition}
\def \E{\mathsf{E}}
\def \P{\mathsf{P}}
\def \R{\mathbb{R}}
\def \F{\mathbb{F}}
\def\d{\mathrm{d}}
\def\D{\mathrm{D}}
\def\ss{\mathrm{\sigma}}
\definecolor{red}{rgb}{1.0,0.0,0.0}
\definecolor{blu}{rgb}{0.0,0.0,1.0}
\definecolor{gre}{rgb}{0.03,0.50,0.03}
\renewcommand{\tilde}{\widetilde}
\title[Singular Control of Diffusions and their Running Infimum or Supremum]{On the Singular Control of a Diffusion and its Running Infimum or Supremum} 
\author[Ferrari]{Giorgio Ferrari}
\author[Rodosthenous]{Neofytos Rodosthenous}
\address{G.~Ferrari: Center for Mathematical Economics (IMW), Bielefeld University, Universit\"atsstrasse 25, 33615, Bielefeld, Germany}
\email{\href{mailto:giorgio.ferrari@uni-bielefeld.de}{giorgio.ferrari@uni-bielefeld.de}}
\address{N.~Rodosthenous: Department of Mathematics, University College London, 25 Gordon Street, London WC1H 0AY, United Kingdom}
\email{\href{mailto:n.rodosthenous@ucl.ac.uk}{n.rodosthenous@ucl.ac.uk}}
\date{\today}
\numberwithin{equation}{section}
\begin{document}

\begin{abstract} 
We study a class of singular stochastic control problems for a one-dimensional diffusion $X$ in which the performance criterion to be optimised depends explicitly on the running infimum $I$ (or supremum $S$) of the controlled process. 
We introduce two novel integral operators that are consistent with the Hamilton-Jacobi-Bellman equation for the resulting two-dimensional singular control problems. 
The first operator involves integrals where the integrator is the control process of the two-dimensional process $(X,I)$ or $(X,S)$; 
the second operator concerns integrals where the integrator is the running infimum or supremum process itself. 
Using these definitions, we prove a general verification theorem for problems involving two-dimensional state-dependent running costs, costs of controlling the process, costs of increasing the running infimum (or supremum) and exit times. 
Finally, we apply our results to explicitly solve an optimal dividend problem in which the manager's time-preferences depend on the company's historical worst performance. 
\end{abstract}

\maketitle

\smallskip

{\textbf{Keywords}}: singular stochastic control; one-dimensional diffusions; running infimum; running supremum; running minimum; running maximum; free boundary; optimal dividends.

\smallskip

{\textbf{MSC2020 subject classification}}: 93E20; 60J60; 49L12; 91B70.


\section{Introduction}
\label{introduction}

This paper deals with a class of singular stochastic control problems in which the decision maker can control via a nondecreasing process $\D$ the trajectory of a one-dimensional It\^o-diffusion $X^{\D}$ and of its associated running infimum $I^{\D}$. The aim is to maximise a discounted expected reward functional in which the time-horizon is given by the first absorption time of $X^{\D}$ at a given threshold, dynamically depending on the infimum process. 
In its general formulation, the expected reward functional consists of the sum of three integrals. 
The first one is the classical integral with respect to the Lebesgue measure of an instantaneous profit function depending on both $X^{\D}$ and $I^{\D}$. 
The other two integrals involve functions of the current levels $(X^{\D}_t,I^{\D}_t)$, which are integrated with respect to the (random) Borel-measures induced by the monotone processes $\D$ and $I^{\D}$. 

The first  main novelty of the present work consists in the definition of two novel integral operators which are consistent with the Hamilton-Jacobi-Bellman equation for singular stochastic control problems with state-process $(X^{\D}_t,I^{\D}_t)$. The challenge is due to the fact that jumps of the process $\D$ can induce jumps of $X^{\D}$ and $I^{\D}$ as well, so that understanding how to properly integrate over those jump times is a nontrivial task. 
As a matter of fact, depending on the size of such jumps of $\D$, there might be scenarios in which: 
\begin{enumerate}
\item[(a)] only the process $X^{\D}$ has a discontinuity, 
\item[(b)] both $X^{\D}$ and $I^{\D}$ have discontinuities with the same jump sizes,
\item[(c)] both $X^{\D}$ and $I^{\D}$ have discontinuities, but the sizes of their jumps are different.
\end{enumerate}
To the best of our knowledge, singular stochastic control problems with the aforementioned features are addressed in this paper for the first time.
For completeness, we also provide the corresponding framework for singular stochastic control problems involving a controlled one-dimensional It\^o-diffusion $X^{\D}$ and its associated controlled running supremum process $S^{\D}$.

On the one hand, this paper extends the literature on singular stochastic control problems in which the only state variable is a singularly controlled It\^o process $X^{\D}$, and the marginal cost of control is state-dependent.
A first contribution in this direction (in the sense of scenario (a) above) has been provided in \cite{Zhu}, whose definition of integral has been later extensively used in the singular control literature. 
In particular, these have been used for one-sided singular controls of classical diffusions \cite{AlMotairiZervos} and reflected diffusions \cite{Ferrari},
for combinations of one-sided singular controls and stopping \cite{LonZervos}, and for non-zero-sum games \cite{Dammannetal}, among others, 
with various applications in economics and finance, 
and especially in models with multiplicative price impact \cite{Belarevetal1, 
Belarevetal2, 
DammannFerrari,  
GZ15, 
HMP19}. 
The new definition in this paper essentially addresses the fact that the control process $\D$ can induce all three scenarios (a)--(c) in the same control problem, depending on the location of the process $(X^\D,I^\D)$ in the state-space, and also takes care of a resulting peculiar `hockey-stick' movement of the controlled process $(X^\D,I^\D)$ in the scenario (c); 
see Section \ref{sec:processsetting} for details and Figure \ref{Fig2} for examples of such jumps. 

On the other hand, the definition of the integral with respect to the (random) Borel-measure induced by the {\it controlled} running infimum process $I$ (or equivalently the {\it controlled} running supremum $S$) is completely novel in the literature. 
Such processes $I$ and $S$ and, possibly, integrals with respect to them, appeared in the literature on optimal stopping problems (see, e.g., \cite{GapRod14, GapRod16a, GapRod16b, GuoZerv10, KyprOtt14, Ott13,  Peskir98, RodZerv17, SheppShir93}, amongst many others). However, in those papers, $I$ and $S$ have always been considered to be continuous and uncontrolled, which results into the consideration of integrals only of the classical Lebesgue-Stieltjes type.

The second main theoretical contribution of this paper is that the two novel integral operators are then exploited in a verification theorem, proposing testable sufficient conditions under which a smooth solution to a variational inequality with gradient constraint identifies with the value function of the considered control problem. In particular, if there exists an (admissible) control process $D^{\star}$ that keeps with minimal effort the underlying state process $(X^{\D^{\star}},I^{\D^{\star}})$ in the region where the gradient constraint is not binding, then such $D^{\star}$ is optimal. 

As an application of the previous theoretical results, we then consider an optimal dividend problem in which the historical worst performance of a company's surplus process impacts the time-preference of the manager for paying dividends. 
This aims at maximising the expected discounted dividends to be paid by controlling the dividend process $\D$, under the constraint that the company defaults at the first time the controlled surplus process $X^{\D}$ (here a drifted Brownian motion, as it is customary in the literature; see, e.g., \cite{AsmussenTaksar, JeanblancShyriaev, LokkaZervos, RadnerShepp}) goes to zero. 
We model the evolution of the company's historical worst performance via the running infimum process $I^{\D}:=i \wedge \inf_{0\leq s \leq t}X^{\D}_s$. 
As a matter of fact, a company with a relatively high value $I^{\D}$ has been performing relatively well in the past, and we assume that this raises the manager's (and investors') confidence, who 
thus prefers to pay dividends sooner than later. 
On the contrary, the manager prefers to postpone dividend payments when the company's worst performance decreases and the company is thus under financial strain (such a behaviour has been observed -- and actually recommended by the ECB -- during the Covid-19 pandemic, and it is in line with the findings of \cite{DenisOsobov, FamaFrench}, among others, that observe that the propensity of paying dividends is lower in less profitable companies). 
We capture this by assuming that the manager's time-preference is described by a discount factor of the form $\exp\{-\rho t - q I^{\D}_t\}$, for some $\rho>0$ and $q\geq0$. 
Those two parameters represent the ``traditional" constant time-preference rate of the manager and the sensitivity of the manager with respect to the company's performance,~respectively.  

We prove that it is optimal to pay dividends as soon as the surplus process is larger or equal to a running infimum-dependent threshold $b$. 
This is given as the unique solution to a nonlinear first-order ordinary differential equation (ODE), whose initial datum coincides with the free boundary obtained in the classical De Finetti problem when $q=0$, studied in  \cite{LokkaZervos}, among many others. 
In particular, the optimal dividend policy is such that the optimally controlled surplus process $X^{\D^{\star}}$ is reflected downwards on $b(I^{\D^{\star}})$ at any strictly positive time. 
At initial time, instead, a lump-sum payment is optimal whenever $x>b(i)$. 
Interestingly, the amplitude of such a jump depends on the position of the initial worst performance $i$ with respect to a critical level $i^{\star}$, such that 
(i) if $i\leq i^\star$, then the lump-sum payment is equal to $x-b(i)$ (thus changing only the first coordinate of the process $(X^\D,I^\D)$ as in the top right panel in Figure \ref{Fig2}), while 
(ii) if $i > i^\star$, then the lump-sum payment is equal to $x-b(i^\star)$ (thus changing both coordinates of the process $(X^\D,I^\D)$ either equally as in bottom left panel or unequally as in bottom right panel in Figure \ref{Fig2}; see Section \ref{sec:v} below for details).
In this respect, the optimal dividend policy $\D^{\star}$ is substantially different to that of the classical De Finetti's problem, which is instead characterised by the reflection at a constant threshold. 
Furthermore, we prove that this boundary function is decreasing in the company's worst performance. 
This implies that for a relatively well-performing company with large $i$, the manager (with higher confidence in the company) is willing to pay dividends sooner than later, while the manager of a company with deteriorating worst performance should optimally postpone dividend payments until the surplus increases to a higher value.
However, interestingly, $\D^{\star}$ and the associated (optimal) value converge to those of the classical De Finetti's problem as $q\downarrow 0$.

The analysis and complete solution to the aforementioned dividend problem with multiplicative time-preference impact is the third main contribution of this paper.
This contributes to the large literature on optimal dividend problems and extensions of the classical De Finetti's problem. Without having the aim of providing here an exhaustive literature review on the topic, we only mention the earlier contribution by \cite{JeanblancShyriaev}, who studies the one-dimensional formulation of the dividend problem with bounded-velocity and singular controls; the works \cite{DeAngelisEkstrom},  \cite{FerrariSchuhmann}, and \cite{LokkaZervos}, who consider problems of optimal dividend with absorption and/or capital injections for a one-dimensional singularly controlled drifted Brownian motion, possibly over a finite time-horizon; the works \cite{Cadenillas}, \cite{Ferrari-etal}, and \cite{Pistorius}, where optimal dividend problems with regime-switching are studied; \cite{Bandini-etal} where a problem of optimal dividend with stochastic discounting force is solved.

The characterisation of the optimal policy via a free-boundary function solving an ODE is a well-established feature in the optimal stopping literature, motivated by applications in mathematical finance, economics, and quickest detection. 
It has been extensively documented in problems involving the extremum processes of diffusions; see, e.g.~\cite{DGM24, GHP13, GuoZerv10, Peskir98, RodZerv17, SheppShir93}, as well as of diffusion-type processes \cite{GapRod14, GapRod16a} and of L\'evy processes; see, e.g.~\cite{KyprOtt14, Ott13, RZ20}, among others.
Related two-dimensional degenerate singular control problems are studied in \cite{DeAngelisEkstromOlofsson, Ekstrom-etal}, where the optimal policy is likewise characterised by a free-boundary solving an ODE. 
While this feature is shared with the dividend problem studied in Section \ref{sec:optdiv} of the present paper, the structure of the state process and the nature of the control problem differ substantially: 
here the state consists of a singularly controlled It\^o diffusion and its controlled running infimum process, whereas in those works the controlled process is purely degenerate and the second state variable is either uncontrolled \cite{DeAngelisEkstromOlofsson} or only indirectly affected (via its drift) by the control \cite{Ekstrom-etal}. 
These differences lead to a markedly different analytical framework.

The rest of the paper is organised as follows. Section \ref{sec:problemsetting} gives the probabilistic setup. 
Section \ref{sec:problem} introduces the considered general formulation of the control problem for two-dimensional settings where the dynamics are driven by a controlled one-dimensional diffusion and its running extremum process. It also introduces the novel definitions of integral operators. 
Section \ref{sec:verthm} then presents the Hamilton-Jacobi-Bellmann equation and proves a verification theorem in the general setting of Section \ref{sec:problem}. 
The problem of optimal dividends with multiplicative time-preference impact is then introduced and solved in Section \ref{sec:optdiv}. Finally, Section \ref{sec:sup} shows how to incorporate also the case of a dependence with respect to the running supremum (rather than the running infimum) in the performance criterion to be optimised.


\section{Setup for a state-space involving the running infimum process}
\label{sec:problemsetting}

Let $(\Omega,\mathcal{F},\P)$ be a complete probability space, $B$ a one-dimensional Brownian motion, and denote by $\F := (\mathcal{F}_t)_{t \geq 0}$ a right-continuous filtration to which $B$ is adapted. We introduce the nonempty set
\begin{align} \label{setA}
\begin{split}
\mathcal{S} := & \{\D:\Omega \times \R_+ \to \R_+, \mbox{ $\F$-adapted and such that } t \mapsto \D_t \mbox{ is $\P$-a.s.}  \\
& \hspace{1cm} \mbox{non-decreasing, right-continuous and s.t. } \D_{0^-} = 0 \}, 
\end{split} 
\end{align}
For frequent future use, notice that any $\D \in \mathcal{S}$ can be expressed as (cf., e.g., \cite[VI.52–53.b]{DellacherieMeyer}),
$$\D_t = \D^c_t + \D^j_t, \qquad t \geq 0,$$
where $\D^c$ is the continuous part of $\D$, and the jump part $\D^j$ is such that 
\begin{equation} \label{DcDj}
\D^j_t:= \sum_{0 \leq s \leq t}\Delta \D_s, 
\quad \text{where} \quad \Delta\D_t := \D_{t} - \D_{t^-}, t>0, 
\quad \text{and} \quad \Delta\D_0=\D_0.
\end{equation}

We then consider on $(\Omega,\mathcal{F},\P)$ a process $X$ satisfying the following stochastic differential equation (SDE) 
\begin{equation}
\label{stateX}
\d X_t^{x,\D} =  b(X_t^{x,\D}) \d t + \ss (X_t^{x,\D}) \d B_t - \d\D_t, \quad X_{0^-}^{x,\D}  = x \in \mathcal{O}.
\end{equation}
Here, $\mathcal{O} := (\underline x, \overline x)$ with $- \infty \leq \underline x < \overline x \leq + \infty$ and assume that $\underline x$ and $\overline x$ are unattainable for the uncontrolled process $X^{x,0}$ (i.e.\ the solution to \eqref{stateX} associated to $\D \equiv 0$). 
Furthermore, $b$ and $\sigma$ satisfy the following assumption.

\begin{assumption}
\label{A1}
The coefficients $b:\mathcal{O} \to \R$ and $\sigma:\mathcal{O} \to (0,\infty)$ are such that for any $x \in \mathcal{O}$ and any $\D \in \mathcal{S}$, there exists a unique strong solution $X^{x,\D}$ to \eqref{stateX}.
\end{assumption}

In order to give more context to the aforementioned assumption we make the following remark.

\begin{remark}
\label{rem:Ass1}
If we impose the conditions that, for any $R>0$ and for some $L_R>0$, we have 
$$
|b(y) - b(x)| + |\sigma(y)-\sigma(x)|\leq L_R|x-y|, \quad \forall x,y:\,\, |x|\leq R,\, |y|\leq R,
$$
then we know from \cite[Theorem V.7]{Protter} and the discussion after its proof that, for any $\D \in \mathcal{S}$, the Assumption \ref{A1} holds true. 
These conditions are satisfied by many well-known diffusions, such as the Brownian motion, the Brownian motion with drift, the geometric Brownian motion, and the Ornstein-Uhlenbeck process. 

However, it is important to note that our subsequent results do not exploit any Lipschitz continuity properties of $b$ and $\sigma$.  
In this respect, we can also consider other important It\^o-diffusions with non-Lipschitz continuous coefficients that satisfy Assumption \ref{A1}, such as the CIR process, under the so-called Novikov condition; See also Section 4 in the recent \cite{Liu-etal} for the solution of an optimal discounted harvesting problem under general singularly controlled It\^o dynamics (which may be of CIR type; cf.~\cite[\it Example~2.12]{Liu-etal}), but without a running infimum/supremum component in the state process.
\end{remark}

Given $x, i \in \mathcal{O}$ such that $i\leq x$ and $\D \in \mathcal{S}$, we define the running infimum of $X^{x,\D}$ from~\eqref{stateX}~by
\begin{equation} \label{eq:infim}
I^{i,\D}_t:=i \wedge \inf_{0\leq s \leq t} X^{x,\D}_s, \quad t \geq 0,
\end{equation}
and we notice that the two-dimensional controlled process $(X^{x,\D}, I^{i,\D})$ is such that 
$$ 
I^{i,\D}_t \leq X^{x,\D}_t, \quad \forall\;  t\geq 0, \quad \P-\text{a.s.}
$$
It thus follows that the state-space of the process $(X^{x,\D}, I^{i,\D})$ is given by
\begin{equation}
\label{eq:setM}
\mathcal{M}:=\{(x,i) \in \mathcal{O} \times \mathcal{O}:\, x \geq i\}.
\end{equation}

\begin{remark} \label{rem:sup}
We begin with the study of optimal control problems in state-spaces defined by two-dimensional processes $(X^{x,\D}, I^{i,\D})$. 
The problem formulations in state-spaces defined by controlled diffusions and their running supremum processes are presented later on, in Section \ref{sec:sup}, since their associated results are obtained via the ones for $(X^{x,\D}, I^{i,\D})$.
\end{remark}

In the rest of the paper we shall also use the notation $\E_{x,i}[f(X^{\D}_t, I^{\D}_t)]=\E[f(X^{x,\D}_t, I^{i,\D}_t)]$. Here $\E_{x,i}$ is the expectation under the measure $\P_{x,i}(\,\cdot\,):=\P(\,\cdot\,|X^{\D}_{0^-}=x, I^{\D}_{0^-}=i)$ on $(\Omega,\mathcal{F})$, and $f:\mathcal{M} \to \R$ is any Borel-measurable function such that $\E_{x,i}[f(X^{\D}_t, I^{\D}_t)]$ is well-defined. 
Furthermore, we shall denote by $X^{x,0}$ and $I^{i,0}$ the uncontrolled solution to \eqref{stateX} (i.e.\ the solution to \eqref{stateX} obtained by taking $\D\equiv 0$) and its corresponding running infimum process.

\subsection{The controlled process $(X^\D, I^\D)$}
\label{sec:processsetting}

The challenging part in this problem formulation is that by controlling downwards the process $X^{x,\D}$ as in \eqref{stateX} via the control process $\D \in \mathcal{S}$ from \eqref{setA}, the running infimum process $I^{i,\D}$ defined by \eqref{eq:infim} is also controlled. 
Recall that the control process $\D$ is linearly controlling the process $X^{x,\D}$, $\P$--a.s., such that 
\begin{equation}
\label{XDD}
X_t^{x,\D} = x + \int_0^t b(X_s^{x,D}) \d s  + \int_0^t \ss (X_s^{x,D}) \d B_s - \D^c_t - \sum_{0 \leq s \leq t} \Delta \D_s, \quad x \in \mathcal{O},
\end{equation}
where $\D^c_t$ is the cumulative amount of control exerted from time 0 up to time $t$ via the continuous part of $\D$, and the sum accounts for the cumulative amount of control exerted from time 0 up to time $t$ via the jump part $\D^j$ of $\D$ defined by \eqref{DcDj}. 

However, contrary to $X^{x,\D}$, the process $I^{i,\D}$ is not always controlled by the control process $\D$.
Informally, the process $I^{i,\D}$ may remain unaffected by changes in $\D$ and $X^{x,\D}$, or decrease by an equal amount as the increase of $\D$ (equivalently, decrease of $X^{x,\D}$), or even decrease by a strictly smaller amount than the increase of $\D$.
These three possibilities arise essentially due to the definition of the process $I^{i,\D}$ in \eqref{eq:infim}. 
In particular, $I^{i,\D}$ is a non-increasing process, which is decreasing only along the diagonal $\partial \mathcal{M}$ of the state-space $\mathcal{M}$ from \eqref{eq:setM}, defined by
\begin{equation}
\label{eq:diag}
\partial \mathcal{M}:=\{(x,i) \in \mathcal{O} \times \mathcal{O}:\, x = i\}, 
\end{equation}
and remains constant while the process $(X^{x,\D}, I^{i,\D})$ is away from the diagonal, namely in 
$$
\mathcal{M} \setminus \partial \mathcal{M} = \{(x,i) \in \mathcal{O} \times \mathcal{O}:\, x > i\}. 
$$
In the remainder of this section, we elaborate further on the scenarios about the occurrences of the aforementioned three possibilities and explain in more detail the movements of the process $(X^{x,\D}, I^{i,\D})$ in its state-space $\mathcal{M}$ dictated by the control process $\D$ in each scenario.

\vspace{1mm}
{\it Scenario} (a). 
This is the one closest to the one-dimensional controlled diffusion setup. 
In particular, this can occur at some time $t_0$ when the process $(X^{x,\D}_{t_0-}, I^{i,\D}_{t_0-}) \in \mathcal{M} \setminus \partial \mathcal{M}$, and the increase in the control process $\D$ at time $t_0$ is either due to $\D^c_{t_0}$ or due to a relatively small-sized jump $\D^j_{t_0} < X^{x,\D}_{t_0-} - I^{i,\D}_{t_0-}$. 
In both cases, we will end up having $X^{x,\D}_{t_0} > I^{i,\D}_{t_0}$, i.e.~$(X^{x,\D}_{t_0}, I^{i,\D}_{t_0}) \in \mathcal{M} \setminus \partial \mathcal{M}$, hence the change in $(X^{x,\D}, I^{i,\D})$ due to the control process $D$ at time $t_0$ affects only the first coordinate.\footnote{\, Notice that, even if the size of the jump $\D^j_{t_0} =  X^{x,\D}_{t_0-} - I^{i,\D}_{t_0-}$, the process will end up exactly on the diagonal, i.e.~$(X^{x,\D}_{t_0}, I^{i,\D}_{t_0}) \in \partial \mathcal{M}$, but the second coordinate $I^{i,\D}$ will still not change due to the jump $\D^j_{t_0}$, so this is still covered by scenario (a). 
The second coordinate $I^{i,\D}$ may however decrease at the next instance, e.g.~due to a downward Brownian fluctuation, since $(X^{x,\D}_{t_0}, I^{i,\D}_{t_0}) \in \partial \mathcal{M}$.}
In the two-dimensional state-space, the process $(X^{x,\D}, I^{i,\D})$ therefore moves {\it downwards in the $x$-axis}, while maintaining the same coordinate value in the $i$-axis (see top right panel in Figure \ref{Fig2} for an example).

\vspace{1mm}
{\it Scenario} (b). 
This is the closest one to the setup of a two-dimensional stochastic process controlled by the same control process. 
In particular, this can occur at some time $t_0$ when the process $(X^{x,\D}_{t_0-}, I^{i,\D}_{t_0-}) \in \partial \mathcal{M}$, no matter whether the increase in the control process $\D$ at time $t_0$ is due to $\D^c_{t_0}$ or due to any size of jump $\D^j_{t_0}$. 
In either case, we will end up having  that any change in $(X^{x,\D}, I^{i,\D})$ due to the control process $D$ at time $t_0$ affects both coordinates equally and simultaneously, leading to $(X^{x,\D}_{t_0}, I^{i,\D}_{t_0}) \in \partial \mathcal{M}$. 
In the two-dimensional state-space, the process $(X^{x,\D}, I^{i,\D})$ therefore moves {\it downwards in a 45$^\circ$-angle direction along the diagonal $\partial \mathcal{M}$}, i.e.~in the south-west direction, by simultaneously decreasing the coordinate values in both the $x$-axis and $i$-axis (see bottom left panel in Figure \ref{Fig2} for an example).

\vspace{1mm}
{\it Scenario} (c). 
This is the most involved one and is not similar to any other standard framework in stochastic control theory. 
This can occur at some time $t_0$ when the process $(X^{x,\D}_{t_0-}, I^{i,\D}_{t_0-}) \in \mathcal{M} \setminus \partial \mathcal{M}$, and the increase in the control process $\D$ at time $t_0$ is due to a relatively large-sized jump $\D^j_{t_0} > X^{x,\D}_{t_0-} - I^{i,\D}_{t_0-}$. 
Such a jump $\D^j$ partly affects only the first coordinate of the process $(X^{x,\D}, I^{i,\D})$, i.e.~until $X^{x,\D}$ decreases to the value of $I^{i,\D}_{t_0-}$, and then it partly affects both coordinates of the process $(X^{x,\D}, I^{i,\D})$ for the remaining amount of the jump.  
We then end up having that such a change in $(X^{x,\D}, I^{i,\D})$ due to the control process $D$ at time $t_0$ affects fully the first coordinate and partly the second coordinate, leading to $(X^{x,\D}_{t_0}, I^{i,\D}_{t_0}) \in \partial \mathcal{M}$. 
In the two-dimensional state-space, the process $(X^{x,\D}, I^{i,\D})$ therefore moves {\it downwards in a `hockey-stick' direction, firstly in the $x$-axis and subsequently in a 45$^\circ$-angle along the diagonal $\partial \mathcal{M}$}, by initially decreasing the value in the $x$-axis, while maintaining the same coordinate value in the $i$-axis  (as in Scenario (a)), and subsequently simultaneously decreasing the coordinate values in both the $x$-axis and $i$-axis (as in Scenario (b)) 
(see bottom right panel in Figure \ref{Fig2}~for~an~example).

\begin{remark} \label{rem:int}
Even though we intuitively describe the jump $\D^j$ of the control process $\D$ in Scenario {\rm (c)} as a sequential procedure, in order to intuitively convey its effect on the process $(X^{x,\D}, I^{i,\D})$, the whole jump of size $\D^j$ occurs instantly at time $t_0$. 

To capture this feature in a rigorous mathematical way, part of our subsequent analysis is the introduction of two novel integrals in such state-spaces $\mathcal{M}$ defined by \eqref{eq:setM} 
(or $\mathcal{N}$ defined by \eqref{eq:setN} for $(X^{x,\D}, S^{s,\D})$), where the integrator is either the control process $\D$ or the controlled running infimum $I^{i,\D}$ defined by \eqref{eq:infim} 
(or the controlled running supremum $S^{s,\D}$ defined by \eqref{eq:sup}). 

These constructions are also consistent with the Hamilton-Jacobi-Bellman equation, which is fundamental in stochastic control theory, and can be used in all such problems which involve explicitly the running infimum or supremum process.
\end{remark}

\section{The Optimal Control Problem}
\label{sec:problem}

In this section we introduce a class of singular stochastic control problems in which the decision maker aims at maximising a discounted expected reward functional dynamically depending on the controlled diffusion process $X^\D$ and the running infimum process $I^\D$ as well, under a stochastic time-horizon given by the first absorption time of $X^{\D}$ at a given threshold function~of~$I^\D$. 

We aim at a general formulation of the above control problem  such that the expected reward functional consists of the sum of three integrals: 
(i) A classical integral with respect to the Lebesgue measure of an instantaneous profit function of the two-dimensional process $(X^{\D},I^{\D})$; 
(ii) An integral involving a function of the current level of $(X^{\D},I^{\D})$ integrated with respect to the (random) Borel-measure induced by the (monotone) control process $\D$;
(iii) An integral involving a function of the current level of $I^{\D}$ integrated with respect to the (random) Borel-measure induced by the (monotone) process $I^{\D}$.
We elaborate further on the generality of such a reward functional in Section~\ref{problem} below.

In order to mathematically formulate the above optimal control problem, we need to define two new integrals in the types (ii) and (iii) above, which take into account that the underlying controlled process $(X^\D,I^\D)$ is on the one hand two-dimensional, but on the other hand the state-space is restricted to a strict subset $\mathcal{M}$ of $\R^2$, and the control process $\D$ does not necessarily affect both coordinate processes at the same time (cf.~Scenarios (a)--(c) in Section \ref{sec:processsetting} and~Remark~\ref{rem:int}).
The role of these definitions is twofold: they allow for a dynamically consistent formulation of the control problem in the presence of a (possibly discontinuous) controlled running infimum, and they provide the appropriate framework for the derivation of the verification theorem in Section 4 and for the construction of a candidate value function in the application of Section 5 (see, in particular Section 5.3.1 for details).

\subsection{Integral with respect to $\D$} \label{SID}

The first integral we introduce is the one where the integrator is the control process $\D \in \mathcal{S}$. 
To account for the most general cases possible, we assume that the marginal reward per unit of control and the discount rate are both functions of the two-dimensional underlying process $(X^\D,I^\D)$. 

This is an appropriate extension (that preserves the consistency with the Hamilton-Jacobi-Bellman equation) of the integral introduced by Zhu in \cite{Zhu} for singular stochastic control problems in which the marginal reward per unit of control is a function of a linearly controlled It\^o-diffusion $X^{\D}$. In recent years, Zhu's integral has found numerous applications in the singular stochastic control literature,  
including other similar constructions of integrals in settings with multiplicative impact (see, e.g.~\cite{AlMotairiZervos,
Belarevetal1, 
Belarevetal2, 
DammannFerrari,
Dammannetal,
Ferrari, 
GZ15, 
HMP19,
LonZervos}). 
In particular, for one-dimensional controlled processes, it is defined as
\begin{align}
\label{defintegralold}
\begin{split}
\int_0^{T} e^{-\int_0^t r(X^{{\D}}_s)\d s} g\big(X^{{\D}}_t\big) \ {\circ}\ \d {\D}_t 
&:= \int_{0}^{T} e^{-\int_0^t r(X^{{\D}}_s)\d s} g\big(X^{{\D}}_t\big)\ \d {\D}^c_t \\
&\hspace{0.5cm} 
+ \sum_{t \leq T:\,\, \Delta {\D}_t \neq 0} e^{-\int_0^t r(X^{{\D}}_s)\d s} \int_{0}^{\Delta {\D}_t} g\big(X^{{\D}}_{t^-}-u\big)\ \d u, 
\end{split}
\end{align} 
from which it is evident that the second term on the right-hand side of \eqref{defintegralold} takes care of the integration over the jump times of the control process $\D$, which in turn yield jumps of the controlled process $X^{\D}$.
However, when the state space is enlarged to include the (controlled) running extremum process, the above formulation is no longer directly applicable. 
For instance, jumps of the control may induce discontinuities not only in the controlled diffusion, but also in the running extremum process. 
As a result, the cost contributions associated with the jump parts of the control must be evaluated in a way that explicitly accounts for their interaction with the controlled running extremum.

Below is our definition in the two-dimensional setup involving a diffusion and its running infimum processes. 
This essentially addresses the fact that all three Scenarios (a)--(c) of Section \ref{sec:processsetting} can be induced by a control process $\D$ depending on the location of the process $(X^\D,I^\D)$ in the state-space (see Figure \ref{Fig2} for such examples). 
Furthermore, it takes care of the resulting peculiar `hockey-stick' movement of the controlled process $(X^\D,I^\D)$ described in the Scenario (c) (see also Remark \ref{rem:int} and the bottom right panel in Figure \ref{Fig2} for an example).

\begin{definition} \label{int:D}
For the stochastic process $(X^{x,\D}, I^{i,\D})$ defined by \eqref{stateX}--\eqref{eq:infim} and its state-space $\mathcal{M}$ given by \eqref{eq:setM}, for any $T>0$ and generic functions $r, g: \mathcal{M} \to \R$, we define  
\begin{align}
\label{defintegral}
\begin{split}
&\int_0^{T} e^{-\int_0^t r(X^{\D}_s, I^{\D}_s)\d s} g\big(X^{\D}_t, I^{\D}_t\big) \ {\diamond}\ \d \D_t \\
& :=\int_{0}^{T} e^{-\int_0^t r(X^{\D}_s, I^{\D}_s)\d s} g\big(X^{\D}_t, I^{\D}_t\big)\ \d \D^c_t \\
& \hspace{0.5cm} \vspace{-5mm} 
+ \sum_{t \leq T:\,\, \Delta \D_t \neq 0} e^{-\int_0^t r(X^{\D}_s, I^{\D}_s)\d s} \int_0^{(X^{\D}_{t^-} - I^{\D}_{t^-}) \wedge \Delta \D_t} g\big(X^{\D}_{t^-}-u, I^{\D}_{t^-}\big)\ \d u \\
& \hspace{0.5cm} \vspace{-5mm} 
+ \sum_{t \leq T:\,\, \Delta \D_t \neq 0} e^{-\int_0^t r(X^{\D}_s, I^{\D}_s)\d s} \int_{X^{\D}_{t^-} - I^{\D}_{t^-}}^{\Delta \D_t}\mathds{1}_{\{\Delta \D_t > X^{\D}_{t^-} - I^{\D}_{t^-}\}} \, g\big(X^{\D}_{t^-}-u, X^{\D}_{t^-}-u\big)\ \d u. 
\end{split}
\end{align}     
\end{definition}
On the right-hand side of \eqref{defintegral}, the first integral with respect to the continuous (random) Borel-measure $\d \D^c_\cdot(\omega)$ is intended in the Lebesgue-Stieltjes sense, whereas the sum is defined in the absolute convergence sense.
It is noteworthy to mention here that if the running reward function $g(x,i)=g(x)$ in Definition \ref{int:D}, i.e.~depends only on the coordinate process $X^\D$, then the integral with respect to the control process $\D$ reduces to 
\begin{align}
\label{defintegralX}
\begin{split}
\int_0^{T} e^{-\int_0^t r(X^{\D}_s, I^{\D}_s)\d s} g\big(X^{\D}_t\big) \ {\diamond}\ \d \D_t 
&= \int_{0}^{T} e^{-\int_0^t r(X^{\D}_s, I^{\D}_s)\d s} g\big(X^{\D}_t\big)\ \d \D^c_t \\
&\hspace{0.5cm} \vspace{-5mm} 
+ \sum_{t \leq T:\,\, \Delta \D_t \neq 0} e^{-\int_0^t r(X^{\D}_s, I^{\D}_s)\d s} \int_{0}^{\Delta \D_t} g\big(X^{\D}_{t^-}-u\big)\ \d u, 
\end{split}
\end{align}
which is consistent with the integral definition in \eqref{defintegralold}, even if the discount rate function $r$ still depends on the two-dimensional process $(X^\D,I^\D)$. 
Definition \ref{int:D} coincides with the definition in \eqref{defintegralold} in the particular case when both the running reward function and the discount rate depend only on $X^\D$, i.e.~$g(x,i)=g(x)$ and $r(x,i)=r(x)$.

\subsection{Integral with respect to $I^\D$} \label{SII}

The second integral we define in this paper is novel in the literature. 
In particular, this is the integral where the integrator is the controlled infimum process $I^\D$, when the control $\D \in \mathcal{S}$. 
We again allow for the most general setup with the discount rate being a function of the two-dimensional underlying process $(X^\D,I^\D)$ and the marginal reward per unit of decrease of $I^\D$ being a function of the process $I^\D$ itself. 
The following definition takes into account also the peculiarity that the process $\D$ controls $I^\D$ only when $(X^\D,I^\D)$ is at the diagonal of the state space $\partial\mathcal{M}$ (cf.~Scenarios (a)--(c) in Section \ref{sec:processsetting}). 

\begin{definition} \label{int:I}
For the stochastic process $(X^{x,\D}, I^{i,\D})$ defined by \eqref{stateX}--\eqref{eq:infim} and its state-space $\mathcal{M}$ given by \eqref{eq:setM}, for any $T>0$ and generic functions $r, g: \mathcal{M} \to \R$, we define 
\begin{align}
\label{defintegral-I}
\begin{split}
& \int_0^{T} e^{-\int_0^t r(X^{\D}_s, I^{\D}_s)\d s} g\big(X^{\D}_t, I^{\D}_t\big) \ {\scriptstyle{\Box}}\ \d I^{\D}_t \\
&:= \int_{0}^{T} e^{-\int_0^t r(X^{\D}_s, I^{\D}_s)\d s} g\big(X^{\D}_t, I^{\D}_t\big)\ \d I^{\D,c}_t \\
& \hspace{0.5cm} - \sum_{t \leq T:\,\, \Delta \D_t \neq 0} e^{-\int_0^t r(X^{\D}_s, I^{\D}_s)\d s}  
\int_{X^{\D}_{t^-} - I^{\D}_{t^-}}^{\Delta \D_t} {\mathds{1}_{\{\Delta \D_t > X^{\D}_{t^-} - I^{\D}_{t^-}\}}}\, g\big(X^{\D}_{t^-}-u, X^{\D}_{t^-}-u\big)\ \d u.
\end{split}
\end{align}
\end{definition}

The first integral on the right-hand side of \eqref{defintegral-I} is intended in the Lebesgue-Stieltjes sense with respect to the (random) measure $\d I^{\D,c}_\cdot(\omega)$ induced by the continuous part $I^{\D,c}$ of the non-increasing process $I^{\D}$, and the sum is defined in the absolute convergence sense.
Notice that, given that $I^{\D}$ is a non-increasing process, the increments of its continuous part $I^{\D,c}$ are negative, and the measure $\d I^{\D,c}$ has support on $\{t\geq0 : X^{\D}_t(\omega) = I^{\D}_t(\omega)\}$, $\omega \in \Omega$ .

\begin{remark}
\label{rem:integralI}
Integrals with respect to a running infimum process $I$ (or equivalently running supremum $S$) have appeared, for example, in optimal stopping problems (see \cite{GapRod14, GapRod16a, GapRod16b, GuoZerv10, KyprOtt14, Ott13, Peskir98, RodZerv17}, among others). In those papers, process $I$ (or $S$) is continuous and uncontrolled, so that the integral is defined in the classical Lebesgue-Stieltjes sense. This is clearly embedded in our \eqref{defintegral-I} if one restricts the set of policies to include only the control processes $\D$ for which $I^{\D}$ is continuous (e.g., absolutely continuous controls).
\end{remark}


\subsection{Stochastic control problem formulation}
\label{problem}

Given measurable functions $r:\mathcal{M} \to \R$, $\Pi:\mathcal{M} \to \R$, $f:\mathcal{M} \to \R$, and $h:\R \to \R$, 
we consider a decision maker who discounts future rewards, at any $t\geq 0$, with a stochastic rate $r(X^\D_t, I^\D_t)$, and receives an instantaneous stochastic reward $\Pi(X^\D_t, I^\D_t)$ per unit of time. 
For each time $t\geq 0$ that the decision maker exerts control $\D$ to decrease the process $X^\D$ (and occasionally $I^\D$; cf.~Section \ref{sec:processsetting}), the decision maker receives a marginal reward $f(X^\D_t, I^\D_t)$ per unit of exerted control. 
Finally, we further allow for the possibility of a marginal reward $h(I^\D_t)$ per unit of decrease of the running infimum process $I^\D_t$. 
Given that $I^\D$ decreases only when the process $(X^\D, I^\D)$ moves on the diagonal $\partial\mathcal{M}$ of the state-space, i.e.~when $X^\D=I^\D$, the latter reward is taken without loss of generality to be a function of $I^\D$ only.  
In such a setup, the aim of the decision maker is to select an optimal control strategy $\D^{\star}$ that maximises their collective rewards.

Clearly, if any of the aforementioned reward functions $\Pi, f, h$, is negative on some subset of $\mathcal{M}$, then the corresponding negative reward function represents a cost incurred by the decision maker. Furthermore, each such reward function $\Pi, f, h$ can be taken identically equal to zero, depending on the application under consideration.
There is a plethora of applications in the literature of singular control problems with different types of trade-offs for decision makers, that can be embedded in our present setup. 
In Section \ref{sec:optdiv}, we present one such application of the traditional optimal dividend problem with the novelty that the worst performance of the company’s surplus process impacts the manager's time-preference for paying dividends. 

For any starting value $X^\D_{0^-}=x$ of the underlying process \eqref{stateX} and $I^\D_{0^-}=i$ of its running infimum \eqref{eq:infim}, such that $(x,i)\in \mathcal{M}$, we are now ready to define in mathematical terms the expected reward functional $\mathcal{J}_{x,i}$ of the decision maker (based on the aforementioned rewards)~by 
\begin{align}
\label{eq:payoff}
\begin{split}
\mathcal{J}_{x,i}(\D) := &\E_{x,i}\bigg[ \int_0^{\tau_a^{\D}} e^{-\int_0^t r(X^{\D}_s, I^{\D}_s) \, \d s} \, \Pi\big(X^{\D}_t, I^{\D}_t\big)\ \d t \bigg]\\
&+ \E_{x,i}\bigg[ \int_0^{\tau_a^{\D}} e^{-\int_0^t r(X^{\D}_s, I^{\D}_s)\, \d s} \Big(  f\big(X^{\D}_t, I^{\D}_t\big) \ {\diamond}\ \d \D_t + h\big(I^{\D}_t\big) \ {\scriptstyle{\Box}}\ \d I^{\D}_t \Big)\bigg],
\end{split}
\end{align}
where, for a given function $a:\R \to \R$ such that $a \in C(\R;\R)$, and any $\D \in \mathcal{S}$, we set
\begin{equation}
\label{tau}
\tau_a^{\D} 
= \tau_a^{\D}(x,i) 
:= \inf\big\{t \geq 0:\, X^{x,\D}_t \leq a(I^{i,\D}_t) \big\}, \quad (x,i)\in \mathcal{M}. 
\end{equation}
We shall refer to $\tau_a^{\D}$ from \eqref{tau} as the \emph{absorption time} of the process $(X^{x,\D}, I^{i,\D})$ at which the problem terminates, which can of course be infinite (see, e.g.~Lemma \ref{lem:state} below). 

Then, for $(x,i)\in \mathcal{M}$, we introduce the \emph{set of admissible controls} for the decision maker as
\begin{eqnarray}
\label{set:A}
& \hspace{0.5cm}\mathcal{A}(x,i) := 
\bigg\{ \D \in \mathcal{S}:\, X^{x,\D}_{t} \in \mathcal{O},\,\, X^{x,\D}_{t^-} - \Delta \D_t \geq a\big(I^{i,\D}_{t^-} - \big(\Delta \D_t - (X^{x,\D}_{t^-} - I^{i,\D}_{t^-})\big)^+\big) \vee \underline{x} \\
&\text{and}\,\, \E_{x,i}\bigg[\int_0^{\tau_a^{\D}} e^{-\int_0^t r(X^{\D}_s, I^{\D}_s)\, \d s} \Big(\big|\Pi\big(X^{\D}_t, I^{\D}_t\big)\big| \, \d t +  \big|f\big(X^{\D}_t, I^{\D}_t\big)\big| \, {\diamond}\, \d \D_t + \big|h\big(I^{\D}_t\big)\big| \, {\scriptstyle{\Box}}\, \d I^{\D}_t\Big)\bigg] < \infty \bigg\}. \nonumber
\end{eqnarray}
An admissible control $\D$ is therefore a process belonging to the set $\mathcal{S}$ and cannot take the process $X^\D$ strictly below $a(I^\D)$ or $\underline{x}$, i.e.~exiting the state-space cannot be caused by a jump of the control variable.
To see this more clearly, recall that the quantity $\big(\Delta \D_t - (X^{x,\D}_{t^-} - I^{i,\D}_{t^-})\big)^+$ represents the size of the jump of the infimum process caused by a jump of the control of amplitude $\Delta \D_t$ (cf.~Scenarios (a)--(c) in Section \ref{sec:processsetting}). 
Furthermore, for any $\D \in \mathcal{A}(x,i)$ one has that $\mathcal{J}_{x,i}(\D)$ is finite.

In view of the absorption time $\tau_a^{\D}$ from \eqref{tau} modelling the stochastic time-horizon of the expected reward functional $\mathcal{J}_{x,i}$ in \eqref{eq:payoff}, we consider the restricted version $\mathcal{M}^a$ of the state-space \eqref{eq:setM}, given by
\begin{equation}
\label{eq:setMa}
\mathcal{M}^a 
:= \{(x,i) \in \mathcal{M} :\, x \geq a(i) \} .
\end{equation}

\begin{lemma} \label{lem:state}
For any $(x,i) \in \mathcal{M}$ and $\D \in \mathcal{A}(x,i)$, we note that: 

\begin{enumerate}[\rm (i)]
\item If $x>a(i)$, then the state-space of the process restricts to the set $\mathcal{M}^a$ defined by \eqref{eq:setMa}. 

\item If $x \leq a(i)$, then 
\begin{equation*} 
\tau_a^{\D}(x,i) = 0 \quad \text{and} \quad 
\mathcal{A}(x,i) = \emptyset .
\end{equation*}

\item If $(a(y),y) \not\in \mathcal{M}$ for all $y \in (\underline{x}, i]$, then 
$$
\tau_a^{\D}(x,i) = \infty, \; \P-\text{a.s.}, \quad \text{and} \quad \mathcal{M}^a \cap \{(x,y) \in \mathcal{M} \;|\; y \leq i \} \equiv \mathcal{M} \cap \{(x,y) \in \mathcal{M} \;|\; y \leq i \}.
$$
\end{enumerate}
\end{lemma}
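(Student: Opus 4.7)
All three assertions follow by directly unpacking the definitions of $\tau_a^{\D}$ in \eqref{tau}, of $\mathcal{M}^a$ in \eqref{eq:setMa}, of $\mathcal{A}(x,i)$ in \eqref{set:A}, together with the built-in inequality $I^{i,\D}_t \leq X^{x,\D}_t$ coming from \eqref{eq:infim}; no stochastic analysis beyond these definitions is required, but care is needed in handling a possible initial jump of $\D$ at $t=0$.

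For (i), I would fix $\D \in \mathcal{A}(x,i)$ with $x > a(i)$ and show that $(X^{x,\D}_t, I^{i,\D}_t) \in \mathcal{M}^a$ for every $t \leq \tau_a^{\D}$. Membership in $\mathcal{M}$ is immediate from $I^{i,\D}_t \leq X^{x,\D}_t$. The constraint $X^{x,\D}_t \geq a(I^{i,\D}_t)$ holds strictly on $[0,\tau_a^{\D})$ by the very definition \eqref{tau} of the absorption time; at $t = \tau_a^{\D}$ itself, the jump clause in \eqref{set:A} rules out any jump of $\D$ that would take $X^{x,\D}$ strictly below $a(I^{i,\D})$, so the state lands precisely on the boundary $\{x = a(i)\}$ of $\mathcal{M}^a$.

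For (ii), I would split by the initial jump size $\Delta \D_0$ of the control. When $\Delta \D_0 \leq x - i$, one has $I^{i,\D}_0 = i$ and $X^{x,\D}_0 = x - \Delta \D_0 \leq x \leq a(i) = a(I^{i,\D}_0)$, which immediately gives $\tau_a^{\D}(x,i) = 0$. When $\Delta \D_0 > x - i$, the jump places the process on the diagonal at $y := x - \Delta \D_0 < i$, and the admissibility inequality in \eqref{set:A} reduces to $y \geq a(y) \vee \underline{x}$; combined with the requirement that the post-jump state lie in the feasible region $\mathcal{M}^a$, this leaves no legitimate initial control consistent with $x \leq a(i)$. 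In both sub-cases the process starts at or below the absorption threshold, yielding $\tau_a^{\D}(x,i) = 0$ and $\mathcal{A}(x,i) = \emptyset$.

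For (iii), the hypothesis $(a(y), y) \notin \mathcal{M}$ for every $y \in (\underline{x}, i]$ forces $a(y) < y$ on that interval. Since $I^{i,\D}_t \leq i$ and $I^{i,\D}_t > \underline{x}$ (inherited from $X^{x,\D}_t \in \mathcal{O}$ via admissibility), combining with $I^{i,\D}_t \leq X^{x,\D}_t$ gives $a(I^{i,\D}_t) < I^{i,\D}_t \leq X^{x,\D}_t$ for all $t \geq 0$, $\P$-a.s., so the infimum in \eqref{tau} is taken over an empty set and $\tau_a^{\D} = +\infty$. The identity $\mathcal{M}^a \cap \{y \leq i\} = \mathcal{M} \cap \{y \leq i\}$ is then immediate, since any $(x,y) \in \mathcal{M}$ with $y \leq i$ automatically satisfies $x \geq y > a(y)$. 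I expect the most delicate step to be the emptiness assertion in (ii), whose justification rests on a careful reading of the jump-admissibility inequality in \eqref{set:A} to rule out any feasible initial move once $x \leq a(i)$.
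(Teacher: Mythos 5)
Your overall approach---unpacking the definitions of $\tau_a^{\D}$, $\mathcal{M}^a$, and $\mathcal{A}(x,i)$ together with the standing inequality $I^{i,\D}_t\leq X^{x,\D}_t$---is exactly the paper's approach, and parts (i) and (iii) reproduce the paper's reasoning faithfully (your observation $a(y)<y\Rightarrow x\geq y>a(y)$ in (iii) is precisely the step the paper records). Your refinement of part (ii) by splitting on $\Delta\D_0$ is also in the right spirit and your first sub-case is fine.

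The second sub-case of (ii) contains a genuine gap, though. You write that when $\Delta\D_0>x-i$ the admissibility constraint at $t=0$ becomes $y\geq a(y)\vee\underline{x}$ with $y:=x-\Delta\D_0$, and then assert that this ``combined with the requirement that the post-jump state lie in $\mathcal{M}^a$'' leaves no admissible control. But those two conditions are one and the same: $y\geq a(y)\vee\underline{x}$ is exactly what it means for $(y,y)$ to lie in $\mathcal{M}^a$. So your argument does not actually produce a contradiction with $x\leq a(i)$. If $a(y)<y$ for some $y<i$, a control with $\Delta\D_0=x-y$ satisfies every condition you listed, lands strictly inside $\mathcal{M}^a$, and in fact gives $\tau_a^{\D}>0$, not $0$. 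To close the gap you would need a further argument, e.g.\ that the initial state $(x,i)$ with $x\leq a(i)$ is already at or past the absorption boundary at $t=0^-$, so the game terminates before any jump can be exercised (which is implicitly what the paper has in mind). You correctly flag (ii) as the delicate step, but the justification offered is circular rather than conclusive. The paper's own ``follows straightforwardly'' proof does not address this sub-case either, so your attempt is comparable in rigor---but since you explicitly invoke the case split, the unresolved sub-case reads as a hole rather than an omission.
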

\begin{proof}
It follows from the definition \eqref{eq:payoff} of the optimisation criterion $J_{x,i}$ and the definition \eqref{tau} of $\tau^\D_a$ that, if $(x,i) \in \mathcal{M}$ is such that $x > a(i)$, then for all admissible controls $\D \in \mathcal{A}(x,i)$ the absorption time from \eqref{tau} takes the form 
\begin{equation*}
\tau_a^{\D}(x,i) = \inf\big\{t \geq 0:\, X^{x,\D}_t \leq a(I^{i,\D}_t) \big\}.
\end{equation*}
This proves part (i). 

Part (ii) follows straightforwardly also from the definition \eqref{set:A} of the admissible set $\mathcal{A}(x,i)$ and the definition \eqref{tau} of $\tau^\D_a$. 
Combining these with the observation that if $(a(y),y) \not\in \mathcal{M}$ for all $y \in (\underline{x}, i]$, then for all $(x,y) \in \mathcal{M}$ such that $y \leq i$, we have $x \geq y > a(y)$. 
Thus, we conclude that part (iii) holds true as well.  
\end{proof}

In order to ensure that the trivial control strategy $\D\equiv 0$ of `doing nothing', i.e.~simply observing and waiting forever,  belongs to $\mathcal{A}(x,i)$ (and therefore that $\mathcal{A}(x,i) \neq \emptyset$) we make the following assumption.
\begin{assumption} \label{ass:noemptyA} 
The state-space process $(X^0,I^0)$ and the functions $a$, $r$, $\Pi$ and $h$ satisfy
$$
\E_{x,i}\bigg[\int_0^{\tau_a^{0}} e^{-\int_0^t r(X^{0}_s, I^{0}_s)\, \d s} \Big(\big|\Pi\big(X^{0}_t, I^{0}_t\big)\big| \, \d t + \big|h\big(I^{0}_t\big)\big| \, {\scriptstyle{\Box}}\, \d I^{0}_t\Big)\bigg] < \infty
$$
\end{assumption}

For any $(x,i) \in \mathcal{M}$, the decision maker thus aims at determining an optimal, admissible control process $\D^{\star} \in \mathcal{A}(x,i)$ that maximises $\mathcal{J}_{x,i}$ as in \eqref{eq:payoff}; hence, at solving
\begin{equation}
   \label{eq:value}
   V(x,i):=\sup_{\D \in \mathcal{A}(x,i)}\mathcal{J}_{x,i}(\D).
\end{equation}
In the sequel, we shall refer to $V$ as the `value function'.


\section{The Hamilton-Jacobi-Bellman Equation and the Verification Theorem}
\label{sec:verthm}

We begin this section, by introducing the infinitesimal generator $\mathcal{L}_{X}$ of the uncontrolled process $X^{x,0}$, i.e.~the solution to the SDE \eqref{stateX} when $\D\equiv 0$, acting on functions belonging to $C^2({\mathcal{O}})$, and defined by
\begin{align}
\label{eq:LX}
\mathcal{L}_{X}:=\frac{1}{2}\sigma^2(x)\frac{\d^2}{\d x^2} +\mu(x)\frac{\d}{\d x}.
\end{align}
In the following, to simplify notation, we write $g_x$, $g_{xx}$ for a generic function $g: \mathcal{M} \to \R$, to denote its first and second derivatives with respect to $x$, and $g_i$ to denote its first derivative with respect to the variable $i$.

For our forthcoming analysis we further define the $i$-section of $\mathcal{M}^a$ from \eqref{eq:setMa} by
\begin{equation}
\label{eq:setMi}
\mathcal{M}^a_i:=\{x \in \mathcal{O}:\, (x,i) \in \mathcal{M}^a\},
\end{equation}
which will play an important role.

By the dynamic programming principle, we expect that the value function $V$ from \eqref{eq:value} identifies with the solution (in a suitable sense) to the Hamilton-Jacobi-Bellman (HJB) equation 
\begin{equation} \label{eq:HJB}
\max\Big\{\big[(\mathcal{L}_{X} -r(x,i))v\big](x,i) + \Pi(x,i), \, f(x,i) - v_x(x,i)\Big\} = 0, 
\quad x \in \mathcal{M}^a_i \setminus \{i, a(i)\}, 
\,\, i \in \mathcal{O},
\end{equation}
subject to the boundary conditions
\begin{equation}
\label{eq:bdconds1}
v_i(x,i)|_{x=i} = h(i), \quad i \in \mathcal{O} \quad \text{s.t.} \quad a(i) < i  \quad \text{(Neumann boundary condition)},  
\end{equation}
and  
\begin{equation}
\label{eq:bdconds2}
v(a(i),i) =0, \quad i \in \mathcal{O} \quad \text{s.t.} \quad a(i) \geq i \quad \text{(Dirichlet boundary condition)}.
\end{equation}

We now present the main result of this section, which is a verification theorem providing sufficient conditions for constructing an optimal control strategy for the general singular control problem in \eqref{eq:value}.

\begin{theorem}[Verification Theorem]
\label{thm:verification}
Recall the value function $V$ from \eqref{eq:value} and fix an arbitrary pair $(x,i) \in \mathcal{M}$. 
Then, we have the following results:
\begin{itemize}
\item[(i)] Let $v$ be a $C^{2,1}(\mathcal{M}^a)$-solution to \eqref{eq:HJB} satisfying \eqref{eq:bdconds1} and \eqref{eq:bdconds2}. 
If also 
\begin{equation} \label{eq:integrability} 
\E_{x,i}\bigg[\sup_{t \in [0,\tau^{\D}_a]} e^{-\int_0^{t} r(X^{\D}_s,I^{\D}_s) \, \d s} \, \big|v\big(X^{\D}_{t},I^{\D}_{t}\big)\big|\bigg] < + \infty , 
\end{equation}
and, for $\D \in \mathcal{A}(x,i)$ such that $\P_{x,i}(\tau_a^{\D}=+\infty)>0$, we further have
\begin{equation} \label{eq:transversality}
\limsup_{T \uparrow \infty}\E_{x,i}\Big[e^{-\int_0^{T} r(X^{\D}_s, I^{\D}_s) \, \d s } \, v(X^{\D}_{T}, I^{\D}_T) \, \mathds{1}_{\{T < \tau^{\D}_a\}}\Big] = 0,
\end{equation}
then $v \geq V$ on $\mathcal{M}^a$.

\item[(ii)] Suppose also that there exists $\D^{\star} \in \mathcal{A}(x,i)$ such that 
on $\{s < \tau^{\D^{\star}}_a \}$ we have 
\begin{equation} \label{eq:Skh1}
\big[\big(\mathcal{L}_X - r(X^{\D^{\star}}_s, I^{\D^{\star}}_s)\big)v\big](X^{\D^{\star}}_s, I^{\D^{\star}}_s) + \Pi (X^{\D^{\star}}_s, I^{\D^{\star}}_s) = 0, \quad \P_{x,i} \otimes \d s - \text{almost everywhere},
\end{equation}
and 
\begin{equation} \label{eq:Skh2}
\int_{[0,\tau^{\D^{\star}}_a]} \mathds{1}_{\{v_x(X^{\D^{\star}}_u, I^{\D^{\star}}_u) > f(X^{\D^{\star}}_u, I^{\D^{\star}}_u)\}} \ \d \D^{\star}_u = 0 , \quad 
\P_{x,i} - \text{almost surely}.
\end{equation}
Then, $v=V$ on $\mathcal{M}^a$ and $\D^{\star}$ is optimal for \eqref{eq:value}.
    \end{itemize}
\end{theorem}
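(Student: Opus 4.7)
The plan is to prove both parts of the theorem through a single application of It\^o's formula to the process $t \mapsto e^{-D_t} v(X^{\D}_t, I^{\D}_t)$, where $D_t := \int_0^t r(X^{\D}_s, I^{\D}_s) \d s$ and $\D \in \mathcal{A}(x,i)$ is arbitrary, and then exploiting the HJB inequalities \eqref{eq:HJB} together with the boundary conditions \eqref{eq:bdconds1}--\eqref{eq:bdconds2}. Since $v \in C^{2,1}(\mathcal{M}^a)$, and $I^{\D}$ is a non-increasing finite-variation process with trivial quadratic variation and no cross-bracket with the continuous part of $X^{\D}$, the continuous portion of the It\^o expansion produces the Lebesgue-time integral of $e^{-D_t}[(\mathcal{L}_X - r)v]$, a local martingale $M$ driven by the Brownian motion, the continuous control integral $-\int e^{-D_t} v_x \, \d \D^c_t$, and the continuous infimum integral $\int e^{-D_t} v_i \, \d I^{\D,c}_t$; the remainder is a discounted sum of jump increments $e^{-D_t}[v(X^{\D}_t, I^{\D}_t) - v(X^{\D}_{t^-}, I^{\D}_{t^-})]$ over jump times of~$\D$.

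The central (and hardest) step is to show that the infimum integral together with the jump sum reassemble precisely into the two novel operators. The measure $\d I^{\D,c}$ is supported on the diagonal $\partial\mathcal{M}$, so the Neumann condition \eqref{eq:bdconds1} lets me substitute $v_i(X^{\D}_t, I^{\D}_t) = h(I^{\D}_t)$ on its support, which is exactly the continuous part of Definition \ref{int:I}. For each jump time $t$ of $\D$ I parametrise the `hockey-stick' displacement $(X^{\D}_{t^-}, I^{\D}_{t^-}) \to (X^{\D}_t, I^{\D}_t)$ by $u \in [0, \Delta \D_t]$: for $u \leq X^{\D}_{t^-} - I^{\D}_{t^-}$ only the $x$-coordinate decreases (covering Scenario (a) and the first leg of (c)), while for $u > X^{\D}_{t^-} - I^{\D}_{t^-}$ the trajectory moves along the diagonal (Scenarios (b) and the second leg of (c)). Writing the increment as the telescoping integral $\int_0^{\Delta \D_t} (\d/\d u) \, v(X^{\D}_{t^-} - u, I^{\D}_{t^-} \wedge (X^{\D}_{t^-} - u)) \, \d u$, expanding the derivative via the chain rule, and again substituting $v_i = h$ along the diagonal segment, the jump sum splits term-by-term into the jump parts of $\int v_x \diamond \d \D$ (cf.~Definition \ref{int:D}) and $\int h \, {\scriptstyle\Box}\, \d I^{\D}$ (cf.~Definition \ref{int:I}). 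This is precisely what the novel operators are engineered to accommodate and is the principal source of technical difficulty in the argument.

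After this reassembly, rearranging It\^o's identity and taking $\E_{x,i}$ (with a standard localisation of $M$ along a sequence of stopping times, then removed via \eqref{eq:integrability} together with the admissibility of $\D$), the HJB inequalities $(\mathcal{L}_X - r)v + \Pi \leq 0$ and $v_x \geq f$ yield, with $T$ replaced by $T \wedge \tau^{\D}_a$, the bound
\begin{equation*}
v(x,i) \geq \E_{x,i}\bigl[e^{-D_{T \wedge \tau^{\D}_a}} v(X^{\D}_{T \wedge \tau^{\D}_a}, I^{\D}_{T \wedge \tau^{\D}_a})\bigr] + \E_{x,i}\biggl[\int_0^{T \wedge \tau^{\D}_a} e^{-D_t}\bigl(\Pi \, \d t + f \diamond \d \D_t + h \, {\scriptstyle\Box}\, \d I^{\D}_t\bigr)\biggr].
\end{equation*}
Letting $T \uparrow \infty$, the Dirichlet condition \eqref{eq:bdconds2} kills the terminal term on $\{\tau^{\D}_a < \infty\}$, the transversality hypothesis \eqref{eq:transversality} does so on $\{\tau^{\D}_a = \infty\}$, and dominated convergence, justified by \eqref{eq:integrability} together with the admissibility bound defining $\mathcal{A}(x,i)$, yields $v(x,i) \geq \mathcal{J}_{x,i}(\D)$. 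Taking the supremum over $\D \in \mathcal{A}(x,i)$ proves part (i).

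For part (ii), I repeat the above expansion with $\D^{\star}$. Condition \eqref{eq:Skh1} forces the Lebesgue-time integrand to vanish $\P_{x,i} \otimes \d t$-almost everywhere, while \eqref{eq:Skh2} guarantees that $v_x = f$ holds $\P_{x,i}$-almost surely on the support of the random measure underlying the $\diamond$-integral against $\d \D^{\star}$. Both chains of pointwise inequalities in the argument for part (i) therefore become equalities, giving $v(x,i) = \mathcal{J}_{x,i}(\D^{\star}) \leq V(x,i)$, which combined with the inequality from part (i) delivers $v = V$ on $\mathcal{M}^a$ and the optimality of $\D^{\star}$.
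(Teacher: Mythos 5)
Your proposal is correct and follows essentially the same route as the paper's own proof: applying the It\^o--Meyer formula to $e^{-D_t}v(X^\D_t,I^\D_t)$, localising, parametrising each jump by $u\in[0,\Delta\D_t]$ along the `hockey-stick' path to reassemble the jump sum and the continuous $\d I^{\D,c}$-integral into the $\diamond$- and $\Box$-operators of Definitions~\ref{int:D} and~\ref{int:I}, then invoking \eqref{eq:HJB}, \eqref{eq:bdconds1}, \eqref{eq:bdconds2}, \eqref{eq:integrability}, \eqref{eq:transversality} and dominated convergence, with part (ii) obtained by noting that \eqref{eq:Skh1}--\eqref{eq:Skh2} force all inequalities to be equalities. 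The only cosmetic difference is that you substitute $v_i=h$ via \eqref{eq:bdconds1} \emph{before} recognising the $\Box$-operator, whereas the paper first rewrites everything as $v_i\,{\scriptstyle\Box}\,\d I^\D - v_x\,\diamond\,\d \D$ and applies the Neumann condition afterwards; both orderings yield the same bound.
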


\begin{proof}
We fix an arbitrary pair $(x,i) \in \mathcal{M}^a$ and prove the two parts separately. 

\vspace{1mm}
{\it Proof of part }(i). 
Suppose that $\D \in \mathcal{A}(x,i)$. 
Then define the sequence $(\zeta_n)_{n\in \mathbb{N}}$ of localising stopping times
$$
\zeta_n
:= \inf\Big\{t \geq 0:\, \int_0^t e^{- 2\int_0^s r(X^{\D}_u,I^{\D}_u) \, \d u} \, \sigma^2(X^{\D}_s) \,  |v_x(X^{\D}_s,I^{\D}_s)|^2 \ \d s \geq n\Big\}, \quad n \in \mathbb{N}, \quad \P_{x,i}-\text{a.s.},$$
that makes the local martingale
$$M:= \Big\{\int_0^t e^{-\int_0^s r(X^{\D}_u,I^{\D}_u)\, \d u} \, \sigma(X^{\D}_s) \, v_x(X^{\D}_s,I^{\D}_s) \ \d W_s\Big\}_{t\geq0}$$
a true $\mathbb{F}$-martingale under $\P_{x,i}$, which then has zero expectation.

Then, for $T>0$, applying It\^o-Meyer's formula for semimartingales, and taking expectations under $\P_{x,i}$ we obtain
\begin{align}
    \label{eq:Ito1}
    & \E_{x,i}\Big[e^{-\int_0^{T \wedge \zeta_n \wedge \tau_a^{\D}} r(X^{\D}_s,I^{\D}_s) \, \d s} \, v\big(X^{\D}_{T \wedge \zeta_n \wedge \tau_a^{\D}},I^{\D}_{T \wedge \zeta_n \wedge \tau_a^{\D}}\big)\Big] 
    = v(x,i) \nonumber \\
    & + \E_{x,i}\bigg[\int_0^{T \wedge \zeta_n \wedge \tau_a^{\D}} e^{-\int_0^{t} r(X^{\D}_s,I^{\D}_s) \, \d s} \, \big[\big(\mathcal{L}_X - r(X^{\D}_t,I^{\D}_t)\big)v\big](X^{\D}_t,I^{\D}_t) \ \d t\bigg]  \nonumber \\
    & + \E_{x,i}\bigg[\int_0^{T \wedge \zeta_n \wedge \tau_a^{\D}} e^{-\int_0^{t} r(X^{\D}_s,I^{\D}_s) \, \d s} \Big(v_i(X^{\D}_t,I^{\D}_t) \ \d I^{\D,c}_t - v_x(X^{\D}_t,I^{\D}_t) \ \d \D^c_t\Big) \bigg] \nonumber \\
    & + \E_{x,i}\bigg[\sum_{t \leq T \wedge \zeta_n \wedge \tau_a^{\D}:\, \Delta \D_t \neq 0} e^{-\int_0^{t} r(X^{\D}_s,I^{\D}_s) \, \d s} \Big(v(X^{\D}_t,I^{\D}_t) - v(X^{\D}_{t^-},I^{\D}_{t^-})\Big)\bigg].
\end{align}

Let us now focus our attention on the sum term appearing in the last formula. It is here that Definitions \ref{int:D} and \ref{int:I} play an important role. 
In particular, we have $\P_{x,i}$-a.s. that 
\begin{align}
& v(X^{\D}_t,I^{\D}_t) - v(X^{\D}_{t^-},I^{\D}_{t^-}) \nonumber \\
&= v\big(X^{\D}_{t^-} - \Delta \D_t,I^{\D}_{t^-} - \big(\Delta \D_t - (X^{\D}_{t^-} - I^{\D}_{t^-})\big)^+\big) - v(X^{\D}_{t^-},I^{\D}_{t^-}) \nonumber \\
& = \int_0^{\Delta \D_t} \frac{\d v}{\d u}\big(X^{\D}_{t^-} - u,I^{\D}_{t^-} - \big(u - (X^{\D}_{t^-} - I^{\D}_{t^-})\big)^+\big) \ \d u \nonumber \\
& = - \int_0^{\Delta \D_t} \Big[   v_x\big(X^{\D}_{t^-} - u,I^{\D}_{t^-} - \big(u - (X^{\D}_{t^-} - I^{\D}_{t^-})\big)^+\big) \,+\, 
\mathds{1}_{\{u > X^{\D}_{t^-} - I^{\D}_{t^-} \}} \, v_i\big(X^{\D}_{t^-} -u, X^{\D}_{t^-}-u\big)\Big] \ \d u \nonumber \\
&= - \int_0^{(X^{\D}_{t^-} - I^{\D}_{t^-}) \wedge \Delta \D_t} v_x\big(X^{\D}_{t^-} - u,I^{\D}_{t^-}\big) \ \d u \nonumber \\
&\quad - \int_{X^{\D}_{t^-} - I^{\D}_{t^-}}^{\Delta \D_t} \big(v_x + v_i\big)\big(X^{\D}_{t^-} - u, X^{\D}_{t^-} - u\big) \, \mathds{1}_{\{\Delta \D_t > X^{\D}_{t^-} - I^{\D}_{t^-} \}}\ \d u. \nonumber
\end{align}
Upon inserting the last equation into \eqref{eq:Ito1} and using definitions \eqref{defintegral} and \eqref{defintegral-I} we obtain
\begin{align}
    \label{eq:Ito1bis}
    & \E_{x,i}\Big[e^{-\int_0^{T \wedge \zeta_n \wedge \tau_a^{\D}} r(X^{\D}_s,I^{\D}_s) \, \d s} \, v\big(X^{\D}_{T \wedge \zeta_n \wedge \tau_a^{\D}},I^{\D}_{T \wedge \zeta_n \wedge \tau_a^{\D}}\big)\Big] 
    = v(x,i)  \nonumber \\
    & + \E_{x,i}\bigg[\int_0^{T \wedge \zeta_n \wedge \tau_a^{\D}} e^{-\int_0^{t} r(X^{\D}_s,I^{\D}_s) \, \d s} \big[\big(\mathcal{L}_X - r(X^{\D}_t,I^{\D}_t)\big)v\big](X^{\D}_t,I^{\D}_t) \ \d t\bigg]  \nonumber \\
    & + \E_{x,i}\bigg[\int_0^{T \wedge \zeta_n \wedge \tau_a^{\D}} e^{-\int_0^{t} r(X^{\D}_s,I^{\D}_s)\, \d s} \Big(v_i(X^{\D}_t,I^{\D}_t) \ {\scriptstyle{\Box}}\ \d I^{\D}_t - v_x(X^{\D}_t,I^{\D}_t) \ {\diamond}\ \d \D_t\Big) \bigg].
\end{align}

Hence, employing \eqref{eq:HJB}, the fact that the (random) measure $\d I^{\D}_{\cdot}(\omega)$ has support on $\{t\geq0: X^{\D}_t(\omega) = I^{\D}_t(\omega)\}$, $\omega \in \Omega$, and then the boundary condition \eqref{eq:bdconds1}, yields
\begin{align*}
    & \E_{x,i}\Big[e^{-\int_0^{T \wedge \zeta_n \wedge \tau_a^{\D}} r(X^{\D}_s,I^{\D}_s) \, \d s} \, v\big(X^{\D}_{T \wedge \zeta_n \wedge \tau_a^{\D}},I^{\D}_{T \wedge \zeta_n \wedge \tau_a^{\D}}\big)\Big] 
    \leq v(x,i) \nonumber \\
    & - \E_{x,i}\bigg[\int_0^{T \wedge \zeta_n \wedge \tau_a^{\D}} e^{-\int_0^{t} r(X^{\D}_s,I^{\D}_s) \, \d s} \Big(\Pi(X^{\D}_t,I^{\D}_t)\ \d t + f(X^{\D}_t,I^{\D}_t) \ {\diamond}\ \d \D_t + h(I^{\D}_t) \ {\scriptstyle{\Box}}\ \d I^{\D}_t\Big) \bigg];
\end{align*}
that is, rearranging terms,
\begin{align}
\label{eq:Ito1tris}
v(x,i) &\geq \E_{x,i}\Big[e^{-\int_0^{T \wedge \zeta_n \wedge \tau_a^{\D}} r(X^{\D}_s,I^{\D}_s)\, \d s} \, v\big(X^{\D}_{T \wedge \zeta_n \wedge \tau_a^{\D}},I^{\D}_{T \wedge \zeta_n \wedge \tau_a^{\D}}\big)\Big] \nonumber \\
&\quad + \E_{x,i}\bigg[\int_0^{T \wedge \zeta_n \wedge \tau_a^{\D}} \hspace{-3mm} e^{-\int_0^{t} r(X^{\D}_s,I^{\D}_s) \, \d s} \Big(\Pi(X^{\D}_t,I^{\D}_t) \, \d t + f(X^{\D}_t,I^{\D}_t) \, {\diamond}\, \d \D_t + + h(I^{\D}_t)  \, {\scriptstyle{\Box}}\, \d I^{\D}_t\Big) \bigg].
\end{align}

Notice now that by \eqref{eq:integrability} we obtain  
\begin{align}
\label{eq:integrabilityIto}
\begin{split}
&e^{-\int_0^{T \wedge \zeta_n \wedge \tau_a^{\D}} r(X^{\D}_s,I^{\D}_s)\, \d s} \, \big|v\big(X^{\D}_{T \wedge \zeta_n \wedge \tau_a^{\D}},I^{\D}_{T \wedge \zeta_n \wedge \tau_a^{\D}}\big)\big| \\
&\qquad \qquad \qquad \qquad \leq \sup_{t \in [0,\tau^{\D}_a]} e^{-\int_0^{t} r(X^{\D}_s,I^{\D}_s) \, \d s} \, \big|v\big(X^{\D}_{t},I^{\D}_{t}\big)\big| \in L^1(\Omega, \mathcal{F}, \P_{x,i})
\end{split}
\end{align}
and also by \eqref{set:A} we know that
$$ 
\E_{x,i}\bigg[\int_0^{\tau_a^{\D}} e^{-\int_0^{t} r(X^{\D}_s,I^{\D}_s) \, \d s} \Big( \big|\Pi(X^{\D}_t,I^{\D}_t)\big| \, \d t + \big|f(X^{\D}_t,I^{\D}_t)\big|  \, {\diamond}\, \d \D_t + \big|h(I^{\D}_t)\big|  \, {\scriptstyle{\Box}}\, \d I^{\D}_t\Big)\bigg] < \infty.$$
Hence, we can apply the dominated convergence theorem to let first $n\uparrow \infty$ to obtain
\begin{align*}
v(x,i) &\geq \E_{x,i}\Big[e^{-\int_0^{T \wedge \tau_a^{\D}} r(X^{\D}_s,I^{\D}_s) \, \d s} \, v\big(X^{\D}_{T \wedge \tau_a^{\D}}, I^{\D}_{T \wedge \tau_a^{\D}}\big)\Big] \nonumber \\
&\quad + \E_{x,i}\bigg[\int_0^{T \wedge \tau_a^{\D}} e^{-\int_0^{t} r(X^{\D}_s,I^{\D}_s) \, \d s} \Big(\Pi(X^{\D}_t,I^{\D}_t) \, \d t + f(X^{\D}_t,I^{\D}_t)  \, {\diamond}\, \d \D_t +  h(I^{\D}_t)  \, {\scriptstyle{\Box}}\, \d I^{\D}_t\Big) \bigg].
\end{align*}

Then, we observe that 
\begin{align*}
&\E_{x,i}\Big[e^{-\int_0^{T \wedge \tau_a^{\D}} r(X^{\D}_s,I^{\D}_s)\, \d s} \, v\big(X^{\D}_{T \wedge \tau_a^{\D}}, I^{\D}_{T \wedge \tau_a^{\D}}\big)\Big] \nonumber \\
&=\E_{x,i}\Big[e^{-\int_0^{T} r(X^{\D}_s,I^{\D}_s) \, \d s} \,  v\big(X^{\D}_{T}, I^{\D}_{T}\big) \, \mathds{1}_{\{\tau^{\D}_a > T\}} \Big]
+ \E_{x,i}\Big[e^{-\int_0^{\tau_a^{\D}} r(X^{\D}_s,I^{\D}_s) \, \d s} \,  v\big(X^{\D}_{\tau_a^{\D}}, I^{\D}_{\tau_a^{\D}}\big) \, \mathds{1}_{\{\tau_a^{\D} \leq T\}} \Big] \nonumber \\
&= \E_{x,i}\Big[e^{-\int_0^{T} r(X^{\D}_s,I^{\D}_s) \, \d s} \, v\big(X^{\D}_{T}, I^{\D}_{T}\big) \,  \mathds{1}_{\{\tau^{\D}_a > T\}} \Big],
\end{align*}
where in the last equality we used the definition \eqref{tau} of $\tau^{\D}_a$ and \eqref{eq:bdconds2} which implies that $v\big(X^{\D}_{\tau^{\D}_a}, I^{\D}_{\tau^{\D}_a}\big) = 0$, $\P_{x,i}$-a.s.\ on $\{\tau^{\D}_a \leq T\}$. 
Then, upon using also \eqref{eq:transversality}, we can apply once more the dominated convergence theorem (again thanks to \eqref{eq:integrabilityIto}) to let $T\uparrow \infty$ and get
\begin{align}
\label{eq:Itofinal}
& v(x,i) \geq \E_{x,i}\bigg[\int_0^{\tau_a^{\D}} e^{-\int_0^{t} r(X^{\D}_s,I^{\D}_s) \, \d s} \Big(\Pi(X^{\D}_t,I^{\D}_t) \, \d t + f(X^{\D}_t,I^{\D}_t)  \, {\diamond}\, \d \D_t + h(I^{\D}_t)  \, {\scriptstyle{\Box}}\, \d I^{\D}_t\Big) \bigg].
\end{align}
Notice indeed that \eqref{eq:transversality} is automatically satisfied thanks to \eqref{eq:integrabilityIto} when $\P_{x,i}(\tau^{\D}_a < +\infty)=1$ and it is actually needed to hold only for those $\D \in \mathcal{A}(x,i)$ for which $\P_{x,i}(\tau^{\D}_a = +\infty)>0$, as required.

Hence, \eqref{eq:Itofinal} implies that $v(x,i) \geq \mathcal{J}_{x,i}(\D)$ for any $\D \in \mathcal{A}(x,i)$ and any $(x,i)\in \mathcal{M}^a$. By arbitrariness, $v\geq V$ on $\mathcal{M}^a$, which completes the proof of part (i).

\vspace{1mm}
{\it Proof of part }(ii). 
We now want to prove that actually $v=V$ on $\mathcal{M}^a$ and that $D^{\star}$ defined as in the statement of part (ii) of the theorem is indeed optimal. 
To that end, notice that for $D^{\star}$ all the inequalities leading to \eqref{eq:Itofinal} are in fact equalities. Therefore, 
$$v(x,i) = \mathcal{J}_{x,i}(\D^{\star}) \leq V(x,i).$$
Given the arbitrariness of $(x,i) \in \mathcal{M}^a$, we conclude that $v\leq V$ on $\mathcal{M}^a$, which, together with the previous step, implies that $v=V$ and that $D^{\star}$ is optimal.
\end{proof}

\begin{remark} \label{comments}
Theorem \ref{thm:verification} provides sufficient conditions under which a smooth solution of the HJB equation \eqref{eq:HJB} coincides with the value function of the control problem \eqref{eq:value}. 
It also characterises the conditions under which an admissible control (when it exists) is optimal. The existence of such a smooth solution to the HJB equation, as well as of an optimal admissible control, can be established on a case-by-case basis depending on the application at hand. As an illustration, we rigorously and comprehensively carry out this analysis in the case study presented in Section \ref{sec:optdiv}.

Nevertheless, addressing these issues beyond a purely case-by-case framework is of independent theoretical interest; we briefly comment on this broader direction below:
\begin{enumerate}[\rm (1)]
\item  Using a penalisation approach, the early work \cite{Zhu} establishes the existence of a solution (in the almost-everywhere sense) to the dynamic programming equation associated with the considered multi-dimensional singular stochastic control problem. 
More recent contributions in this direction include \cite{Bovo-etal} and \cite{Kelbert-Moreno}, in the diffusion and jump-diffusion settings, respectively. 
The Sobolev regularity obtained in these works is typically sufficient to establish a verification theorem. Indeed, by means of a mollification procedure that allows for the application of Itô’s lemma (see, e.g.~\cite[\it Theorem 4.1, Chap.~VIII]{Fleming-Soner}), any solution to the HJB equation can thus be identified with the value function of the problem, provided that an optimal control can be constructed (see also point (3) below).

\vspace{1mm}
\item Theorem \ref{thm:verification}.(i) requires a classical solution to the HJB equation, rather than one possessing only Sobolev regularity. This requirement is justified by the fact that, in contrast to the fully multidimensional settings studied in \cite{Bovo-etal, Kelbert-Moreno, Zhu}, the HJB equation arising in our problem effectively reduces to a parameter-dependent ODE with a derivative constraint, in which the $i$-component (representing the current level of the infimum process) acts merely as a parameter. 
In such settings, $C^{2,1}$-regularity of the value function is typically observed (see, e.g.~\cite{AlMotairiZervos, GZ15, MehriZervos}). 
This observation motivates our restriction to classical $C^{2,1}$-solutions in Theorem \ref{thm:verification}.

\vspace{1mm}
\item The conditions on the admissible control $\D^{\star}$ stated in Theorem \ref{thm:verification}.(ii) require that $\D^{\star}$, together with the controlled state process $(X^{\D^{\star}}, I^{\D^{\star}})$, solve a Skorokhod reflection problem at the boundary of the so-called inaction region -- that is, the region where
\[
\big[(\mathcal{L}_{X} - r(x,i))v\big](x,i) + \Pi(x,i) = 0 \quad \text{and} \quad v_x(x,i) > f(x,i)
\]
hold true. 
Constructing a solution to the Skorokhod reflection problem is a central challenge in singular stochastic control, as it typically depends on the regularity of the free boundary separating the no-action and action regions, where the gradient constraint in the HJB equation is binding.  
A careful study of the Skorokhod reflection problem arising from \eqref{eq:value} is an important topic in its own right, and falls beyond the scope of this paper. 
For a detailed review on the construction of optimal reflecting policies in singular stochastic control, we refer the reader to the introduction of \cite{DianettiFerrari}.
\end{enumerate}
\end{remark}


\section{A problem of Optimal Dividend Payments with Multiplicative Time-Preference Impact}
\label{sec:optdiv}

In this section, we aim at applying the theoretical results developed in the previous sections to a specific dividend optimisation problem where the historical worst performance of a company's surplus process impacts the time-preference of the manager for paying dividends. 

To be more precise, we consider a company's surplus process $X^{\D}$ and its historical worst performance (modelled by its running infimum process) to be given, $\P_{x,i}$-a.s., by
\begin{align} \label{XI}
\begin{split}
X^{x,\D}_t &= x + \mu t + \eta W_t - \D_t, \quad t \geq 0, \\
I^{i,\D}_t &= i \wedge \inf_{0\leq s \leq t} X^{x,\D}_s, \qquad \quad\, t \geq 0,
\end{split}
\end{align}
where $x$ is the starting surplus value at time $0^-$, and $i$ is the company's historical worst performance in a period $(-t_0, 0)$ for some $t_0>0$, such that $i \in (-\infty,x]$ and thus $(x,i)\in \mathcal{M}$.
Here, $\D_t$ represents the cumulative amount of dividends paid from time $0^-$ up to time $t$. 
Company surplus models as in $X^{\D}$ are customary in the literature; see, e.g., \cite{AsmussenTaksar, JeanblancShyriaev, LokkaZervos, RadnerShepp}.

The decision maker aims at maximising the expected discounted dividends to be paid by controlling the dividend process $\D$, while taking into account at each time $t$ both 
the current surplus value $X^{\D}_t$, given that the company defaults when its surplus becomes zero, and
the past worst performance $I^{\D}_t$ of the company.
Clearly, a company with a relatively high value $I^{\D}$ has been performing relatively well in the past, and we assume that this raises the manager's (and investors') confidence, who thus prefers to pay dividends sooner than later.
On the contrary, the decision maker prefers to postpone dividend payments when the company's worst performance decreases and the company is thus under financial strain (such a behaviour has been observed -- and actually recommended by the ECB -- during the Covid-19 pandemic, and it is in line with the findings of \cite{DenisOsobov, FamaFrench}, among others, that observe that the propensity of paying dividends is lower in less profitable companies).

To model the aforementioned preference of the decision maker for the timing of dividend payments based on the company's historical worst performance, we assume that the impact of each unit of decrease in the company's historical worst performance $I_t$ on the decision maker's preference for delaying dividend payments is measured by a constant $q>0$ (i.e.~$q$ models the manager's sensitivity with respect to the company's performance; notice that the benchmark (and classical) model with $q=0$ will be addressed in Section \ref{sec:q=0} as the solution to this case is needed for the analysis and determination of the solution in the case $q>0$.
To be more precise, the decision maker's collective discount rate is given by 
$$
\int_{0}^{t} \big( \rho \, \d u + q \, \d I^{i,\D}_u \big), \quad t \geq 0, 
$$
where $\rho>0$ is a constant time-preference rate per unit of time (as in the classical De Finetti dividend problem), where larger values of $\rho$ indicate a preference for receiving dividends sooner than later. 
Given that the impact of the additional (performance-dependent) time-preference rate is multiplicative, the problem to be studied in this section will be called {\it optimal dividend payments with multiplicative time-preference impact}.

Overall, for any given pair of starting surplus and worst performance value $(x,i)\in \mathcal{M}$, the decision maker aims at choosing an optimal dividend policy $\D^*$ to maximise the optimisation criterion $\mathcal{J}_{x,i}$ (cf.~\eqref{eq:payoff}) with the form
\begin{equation} \label{eq:payoff-div}
\mathcal{J}_{x,i}(\D) =  \E_{x,i}\bigg[\int_0^{\tau_0^{\D}} e^{-\rho t - q I^{\D}_t} \ {\diamond}\ \d \D_t\bigg].
\end{equation}
The resulting two-dimensional singular control problem (cf.~\eqref{eq:value}) of the decision maker thus takes the form 
\begin{equation} \label{eq:value-div}
V(x,i):=\sup_{\D \in \mathcal{A}(x,i)} \E_{x,i}\bigg[\int_0^{\tau_0^{\D}} e^{-\rho t - q I^{\D}_t} \ {\diamond}\ \d \D_t\bigg],
\end{equation}
where the absorption time defined by \eqref{tau} becomes the \emph{insolvency time}
$$
\tau_0^{\D}=\inf\big\{t\geq0:\, X^{x,\D}_t \leq 0 \big\},
$$
and the admissible class of controls $\mathcal{A}(x,i)$ for the problem \eqref{eq:value-div} is defined according to \eqref{set:A} by
\begin{eqnarray*} 
&\mathcal{A}(x,i) =
\Big\{ \D \in \mathcal{S} \,:\, X^{x,\D}_{t^-} - \Delta \D_t \geq 0 \,\,\, \text{and} \,\,\, \E_{x,i}\Big[\int_0^{\tau_0^{\D}} e^{-\rho t - q I^{\D}_t} \  {\diamond}\ \d \D_t \Big] < \infty \Big\}. \nonumber
\end{eqnarray*}

Notice that, with respect to the general setup of this paper in Sections \ref{sec:problemsetting}--\ref{sec:problem}, we take
\begin{equation*}
\label{eq:ass-casestudy}
b(x,i)\equiv \mu \in \R, \,\,\, 
\sigma(x,i) \equiv \eta>0, \,\,\, 
a(i) \equiv 0, \,\,\, 
r(x,i) \equiv \rho, \,\,\, 
\Pi(x,i) \equiv 0, \,\,\, 
f(x,i)= e^{-q i}, \,\,\,
h(i)\equiv 0,
\end{equation*}
and clearly Assumptions \ref{A1} and \ref{ass:noemptyA} are satisfied (cf.\ Remark \ref{rem:Ass1}). Hence, we are in position to apply our theoretical results to solve the optimal dividend problem with multiplicative time-preference impact in \eqref{eq:value-div}.

In the sequel, we study separately the cases where the surplus process is on average decreasing or increasing in Sections \ref{sec:mu<0} and \ref{sec:mu>0}, respectively. 
In Section \ref{sec:q=0} we discuss the well-known solution to the classical optimal dividend problem, for which $q=0$. 
Before presenting the solution to the problem, it is worth noticing that the state-space of the process $(X^\D, I^\D)$, for any admissible $\D \in \mathcal{A}(x,i)$, restricts to the set (cf.~Lemma \ref{lem:state})
\begin{equation}
\label{eq:setMa-div}
\mathcal{M}^0 
= \{(x,i) \in \mathcal{M} :\, x \geq 0 \} 
= \big\{(x,i) \in \R^2_+ :\, x \geq i \geq 0 \big\} .
\end{equation}

\subsection{Benchmark case: No multiplicative time-preference impact}
\label{sec:q=0}

We first present the solution to the problem with $q=0$, which is classical, and whose solution will be needed for the analysis and construction of the solution in the case $q>0$.
In the case of $q=0$, the singular control problem \eqref{eq:value-div} takes the form of the classical one-dimensional (De Finetti) optimal dividend problem
\begin{equation} \label{eq:value-DF0}
V_0(x) :=\sup_{\D \in \tilde{\mathcal{A}}(x)} \E_{x}\bigg[\int_0^{\tau_0^{\D}} e^{-\rho t} \ \d \D_t\bigg],
\end{equation}
where $\E_{x}$ is the expectation under the measure $\P_{x}(\,\cdot\,):=\P(\,\cdot\,|X^{\D}_{0^-}=x)$ on $(\Omega,\mathcal{F})$, and 
\begin{align} \label{set:Ax}
&\tilde{\mathcal{A}}(x) =
\Big\{ \D \in \mathcal{S} \,:\, X^{x,\D}_{t^-} - \Delta \D_t \geq 0 
\quad \text{and} \quad \E_{x}\Big[\int_0^{\tau_0^{\D}} e^{-\rho t} \ \d \D_t \Big] < \infty \Big\}. 
\end{align}
This problem is nowadays well-understood and its solution can be found in \cite{JeanblancShyriaev, LokkaZervos}, among others.

On the one hand, if $\mu\leq0$, the company's surplus process is decreasing on average. 
In this case, it turns out that it is optimal to immediately payout all the surplus as dividends~and~we~have
\begin{equation} \label{eq:V0muneg}
V_0(x) = x \quad \text{and} \quad \widehat{D}_t=x,\,\, t\geq0, \quad \widehat{D}_{0^-}=0. 
\end{equation}

On the other hand, if $\mu>0$, the surplus process is (in absence of dividend payments) increasing on average. 
It turns out that in this case the optimal dividend policy in \eqref{eq:value-DF0} is a so-called `threshold-strategy', that prescribes paying dividends either in a lump-sum when $x>b_\circ$, or in a Skorokhod-reflection type by reflecting the surplus process $X^\D$ when reaching the boundary $b_\circ$ from below. 
Hence, letting $\alpha>0$ and $\beta<0$ be the roots of the characteristic equation $\frac{1}{2}\sigma^2 \theta^2 + \mu \theta - \rho=0$, the value function when $\mu>0$ is given by 
\begin{equation} \label{eq:sol-DF0}
V_0(x) = \begin{cases}
- \frac{\beta}{\alpha(\alpha - \beta)} e^{\alpha (x - b_o)} + \frac{\alpha}{\beta(\alpha - \beta)}e^{\beta (x - b_o)}, 
\quad &0 < x < b_\circ, \\
x - b_\circ + \frac{\mu}{\rho}, &x \geq b_\circ,
\end{cases}
\end{equation}
where the optimal threshold $b_\circ$ is defined by 
\begin{equation} \label{eq:bo}
b_\circ := \frac{1}{\alpha - \beta}\ln\Big(\frac{\beta^2}{\alpha^2}\Big). 
\end{equation}
Furthermore, the optimal dividend policy is given by
\begin{equation} \label{eq:OCq0}
\widehat{\D}_t:=\sup_{0\leq s \leq t}\big(x + \mu s + \sigma W_s - b_o\big)^+,\,\, t\geq 0, \quad \widehat{\D}_{0^-}:=0.
\end{equation}

As in the classical optimal dividend problem reviewed in this section, we distinguish the analysis of the two-dimensional singular control problem \eqref{eq:value-div} in the forthcoming sections into the cases of $\mu\leq0$ and $\mu>0$.

\subsection{Dividend problem with multiplicative time-preference impact: Case of $\mu\leq 0$.}
\label{sec:mu<0}

We first show that, as in the classical De Finetti's dividend problem, when the average rate of increase is $\mu\leq 0$, i.e.~the surplus is on average non-increasing, then the policy ``Pay all surplus as dividends immediately" is optimal (see Figure \ref{Fig0}), even in the presence of a multiplicative time-preference impact with rate $q>0$ on the decision maker.

\begin{proposition}
\label{prop:muneg}
Suppose $\mu \leq 0$ in \eqref{XI}. Then the control 
\begin{equation}
\label{eq:optimalcontrolmuneg}
\D^{\star}_t= x, \quad t \geq 0, \quad \D^{\star}_{0^-}=0,
\end{equation}
is optimal and the value function in \eqref{eq:value-div} takes the form
\begin{equation}
\label{eq:valuefctmuneg}
V(x,i)=e^{-q i} \Big(x - i - \frac{1}{q} \Big) + \frac{1}{q}, \quad (x,i) \in \mathcal{M}^0 .
\end{equation}
\end{proposition}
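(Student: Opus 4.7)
The plan is to apply the verification theorem (Theorem \ref{thm:verification}) with the candidate value function
\begin{equation*}
v(x,i) := e^{-q i}\Big(x-i-\tfrac{1}{q}\Big) + \tfrac{1}{q}, \qquad (x,i)\in\mathcal{M}^0,
\end{equation*}
and with $\D^\star$ as in \eqref{eq:optimalcontrolmuneg}. Clearly $v\in C^{2,1}(\mathcal{M}^0)$, and one computes $v_x = e^{-qi} = f(x,i)$, $v_{xx}=0$, and $v_i = -qe^{-qi}(x-i)$. Hence the gradient constraint $f-v_x$ vanishes identically on $\mathcal{M}^0$, the Neumann boundary condition $v_i|_{x=i} = 0 = h(i)$ holds, and the only relevant Dirichlet point (since $a\equiv 0$) is $(0,0)$, where $v(0,0)=0$. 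The HJB inequality then reduces to checking $(\mathcal{L}_X - \rho)v \leq 0$ on $\mathcal{M}^0$; a direct calculation gives
\begin{equation*}
(\mathcal{L}_X - \rho)v(x,i) = e^{-qi}\big[\mu + \tfrac{\rho}{q} - \rho(x-i)\big] - \tfrac{\rho}{q},
\end{equation*}
and, splitting on the sign of the bracket while using $\mu\leq 0$, $x\geq i\geq 0$ and $e^{-qi}\in(0,1]$, this is easily seen to be non-positive.

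Next, the integrability hypotheses \eqref{eq:integrability} and \eqref{eq:transversality} along any admissible $\D \in \mathcal{A}(x,i)$ follow from the linear growth of $v$ in $x$ and its uniform boundedness in $i\geq 0$, combined with the bound $X^\D_t \leq x + \eta W_t$ (using $\mu\leq 0$ and the monotonicity of $\D$): standard Brownian maximal inequalities together with the exponential discount $e^{-\rho t}$ dominate the relevant suprema, so Theorem \ref{thm:verification}(i) delivers the upper bound $v \geq V$ on $\mathcal{M}^0$.

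For the matching lower bound, admissibility of $\D^\star$ is immediate, since its unique jump $\Delta \D^\star_0 = x$ satisfies $X^{\D^\star}_{0^-} - \Delta\D^\star_0 = 0$, so that $\tau_0^{\D^\star} = 0$. The key step --- and the main obstacle --- is the computation of $\mathcal{J}_{x,i}(\D^\star)$ via Definition \ref{int:D}: as soon as $i>0$ one has $\Delta \D^\star_0 = x > x-i = X^{\D^\star}_{0^-} - I^{\D^\star}_{0^-}$, activating the ``hockey-stick'' scenario (c) of Section \ref{sec:processsetting}, so both sums in \eqref{defintegral} contribute and, after the substitution $w = x-u$ in the second one, one obtains
\begin{equation*}
\mathcal{J}_{x,i}(\D^\star) = \int_0^{x-i} e^{-qi}\,\d u + \int_{x-i}^{x} e^{-q(x-u)}\,\d u = (x-i)e^{-qi} + \frac{1-e^{-qi}}{q} = v(x,i),
\end{equation*}
which matches \eqref{eq:valuefctmuneg}; for $i=0$ only the first integral survives and directly gives $v(x,0)=x$. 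Since $\{s<\tau_0^{\D^\star}\}=\emptyset$, the pointwise conditions \eqref{eq:Skh1}--\eqref{eq:Skh2} in Theorem \ref{thm:verification}(ii) hold trivially, and the proof concludes. Crucially, without the novel second sum in \eqref{defintegral}, this calculation would incorrectly yield $xe^{-qi}$ rather than $v(x,i)$, which is why the present integral setup is indispensable even for this ``pay everything immediately'' strategy.
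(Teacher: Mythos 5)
Your proof is correct and takes essentially the same approach as the paper's: both compute $\mathcal{J}_{x,i}(\D^\star)$ explicitly via Definition~\ref{int:D} (with the two sums of the $\diamond$-integral yielding the ``hockey-stick'' contribution $(x-i)e^{-qi}+\tfrac{1}{q}(1-e^{-qi})$), and both invoke Theorem~\ref{thm:verification}(i) using the HJB and boundary conditions together with the estimates $0\le v(x,i)\le C(1+x)$, $X^\D_t\le X^0_t$, and $\E[\sup_{t\ge0} e^{-\rho t}|X^0_t|]<\infty$. The only cosmetic difference is that the paper verifies $(\mathcal{L}_X-\rho)v=\mu e^{-qi}-\rho v\le 0$ directly from $\mu\le 0$ and $v\ge 0$, which is slightly shorter than your case split on the sign of the bracket, and the paper concludes $v\le V$ simply by noting $v$ is the payoff of the admissible $\D^\star$ rather than re-invoking part (ii) of the verification theorem.
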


\begin{proof}
We firstly notice by the definition \eqref{eq:optimalcontrolmuneg} of the control process $\D^{\star}$, the insolvency time becomes $\tau_0^{\D^{\star}} = 0$.
Then, recalling \eqref{defintegral} (cf.\ Definition \ref{int:D}), the facts that $f(x,i) = e^{-q i}$ and $(X^{\D^{\star}}_{0^-}, I^{\D^{\star}}_{0^-})=(x,i)$, 
the payoff $v$ associated to the control \eqref{eq:optimalcontrolmuneg} is given by 
\begin{align} \label{cand}
\begin{split}
v(x,i)
&= \E_{x,i}\bigg[\int_0^{\tau_0^{\D^{\star}}} e^{-\rho t - q I^{\D^{\star}}_t} {\diamond}\ \d \D^{\star}_t\bigg] \\
&= \int_0^{x-i} e^{-q i} \, \d u 
+ \int_{x-i}^x e^{-q(x-u)} \, \d u \\
&= e^{-q i} (x-i) + \frac{1}{q} (1-e^{-qi})
= e^{-q i} \Big(x - i - \frac{1}{q} \Big) + \frac{1}{q}. 
\end{split}
\end{align}

We firstly notice that, since $v$ is the payoff associated to the admissible control $D^{\star} \in \mathcal{A}(x,i)$, we have by construction and the definition \eqref{eq:value-div} of $V$ that $v\leq V$.

It thus remains to show that $v\geq V$ and for this we verify, in the remainder of this proof, the requirements of the statement of Theorem \ref{thm:verification}.(i). 

The candidate $v$ from \eqref{cand} clearly belongs to $C^{2,1}(\mathcal{M}^0)$ and it is such that
\begin{align*}
&v(0,0) = 0, 
\qquad v_x(x,i) = e^{-q i}, 
\qquad v_i(i,i)= 0, \\
&\quad \mathcal{L}_X v(x,i) - \rho v(x,i) = \mu e^{-q i} - \rho v(x,i) \leq 0.
\end{align*}
It can thus identify with a solution to \eqref{eq:HJB} satisfying \eqref{eq:bdconds1} and \eqref{eq:bdconds2}.

Then, given that $\P_{x,i}(\tau_0^{\D^\star}=+\infty)=0$, it only remains to prove \eqref{eq:integrability}. 
To that end, we notice that $0\leq v(x,i) \leq C(1+x)$, for a suitable constant $C>0$, and that $X^D_t \leq X^0_t$ for all $t\geq 0$, $\P$-a.s.. 
Therefore, \eqref{eq:integrability} readily follows thanks to  
$\E[\sup_{t\geq0} e^{-\rho t} |X^0_t|] < \infty,$
and completes~the~proof.
\end{proof}

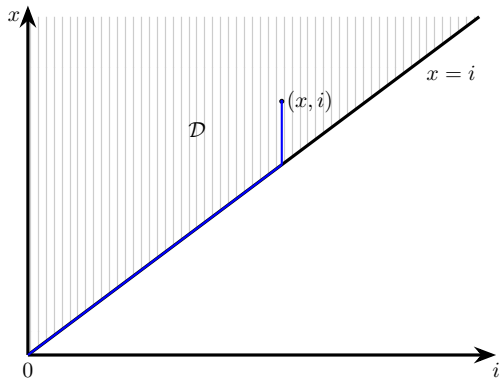
\begin{figure}
\centering
\begin{tikzpicture}
\begin{scope}[scale=0.75, transform shape]
\coordinate[label = below:$0$] (A) at (0,0){};
\coordinate[label = below:$i$] (D) at (8.3,0){};
\coordinate[label = left:$x$] (E) at (0,6){};
\coordinate (F) at (8,1){};
\coordinate (G) at (4,3){};
\coordinate (I) at (4,6){};

\fill[pattern=vertical lines, pattern color=gray!40] (A) -- (E) -- (8,6) -- cycle; 
\node at (3,4) {$\mathcal{D}$};

\draw[very thick, -{Stealth}] (A) -- (8.3,0);
\draw[very thick, -{Stealth}] (A) -- (0,6.2);
\draw[very thick] (A) -- (8,6);
\node at (7.5,5) {$x=i$};

\coordinate (X) at (4.5,4.5){};
\draw[fill=blue] (X) circle (1pt);
\node at (5,4.5) {$(x,i)$};

\draw[thick, blue] (X) -- (4.5,3.375);
\draw[thick, blue] (4.5,3.375) -- (A);

\end{scope}
\end{tikzpicture}
\captionsetup{width=1\textwidth}
\caption{\footnotesize Schematic depiction of the state-space $\mathcal{M}^0 = \mathcal{D}$ given by \eqref{eq:setMa-div} and identifying with the action region $\mathcal{D}$ for the problem \eqref{eq:value-div} with $\mu \leq 0$, 
and the movement (blue) of the process $(X^{x,\rm D},I^{x,\rm D})$ started from $(x,i) \in \mathcal{D}$ such that $x>i$, which jumps downwards in a `hockey-stick' direction to the origin, where it is absorbed.}
\label{Fig0}
\end{figure}

\subsection{Dividend problem with multiplicative time-preference impact: Case of $\mu > 0$.}
\label{sec:mu>0}

In this section we consider the more interesting case (compared to the solution in Section \ref{sec:mu<0}), when the average rate of increase (in the absence of dividend payments) is $\mu>0$. 

In this case, we conjecture that there exists a free boundary function 
$b:\mathcal{O} \to \R$ 
that separates the waiting region $\mathcal{C}$ from the action region $\mathcal{D}$, such that 
\begin{align} \label{CD}
\begin{split}
\mathcal{C} 
&= \{(x,i)\in \mathcal{M}^0 \,:\, 0 < i \leq x < b(i) \}, \\
\mathcal{D} &= \{(x,i)\in \mathcal{M}^0 \,:\, x \geq b(i) \vee i \} .
\end{split}
\end{align}
In other words, when the surplus process is `high enough', i.e.~above a threshold depending on the company's worst performance then the company pays dividends. 

In view of Theorem \ref{thm:verification}, we look for a smooth solution to the HJB equation \eqref{eq:HJB} subject to the boundary conditions \eqref{eq:bdconds1} and \eqref{eq:bdconds2}. 
With this in mind, and the structure of the waiting $\mathcal{C}$ and action $\mathcal{D}$ regions in \eqref{CD}, we aim at determining the pair of functions $(v,b)$ such that the following free-boundary problem is satisfied in the classical sense:
\begin{align}
& \tfrac{1}{2}\sigma^2 v_{xx}(x,i) + \mu v_x(x,i) - \rho v = 0, \quad 0\leq i < x < b(i), \label{eq:FBP1} \\
& v_x(x,i) = e^{-q i}, \quad \quad \quad \quad \quad \quad \quad \quad \quad 0\leq i < x \quad \text{and} \quad x \geq b(i),  \label{eq:FBP2} \\
& \tfrac{1}{2}\sigma^2 v_{xx}(x,i) + \mu v_x(x,i) - \rho v \leq 0, \quad  0\leq i < x, \label{eq:FBP3}  \\
& v_x(x,i) \geq e^{-q i}, \quad \quad \quad \quad \quad \quad  \quad \quad \quad 0\leq i < x, \label{eq:FBP4}  \\
& v_i(i,i)=0, \quad \quad \quad \quad \quad \quad \quad \quad \quad \quad \;\;\, i>0, \label{eq:FBP5}  \\
& v(0,0) = 0. \label{eq:FBP6}
\end{align}
Here, the equations \eqref{eq:FBP1}--\eqref{eq:FBP2} and the inequalities \eqref{eq:FBP3}--\eqref{eq:FBP4} are a consequence of the HJB equation \eqref{eq:HJB}, the equation \eqref{eq:FBP5} comes from the Neumann condition \eqref{eq:bdconds1}, and the equation \eqref{eq:FBP6} comes from the Dirichlet boundary condition \eqref{eq:bdconds2}. 

In order to solve this free-boundary problem, we firstly notice that the ODE \eqref{eq:FBP1} is solved by a function 
\begin{equation} \label{eq:solODE}
v(x,i) = A(i) e^{\alpha x} + B(i) e^{\beta x}, 
\quad \text{for all} \quad  0 \leq i < x < b(i),
\end{equation}
where 
$$\alpha:=\frac{-\mu - \sqrt{\mu^2 + 2\rho\sigma^2}}{\sigma^2} < 0 
\quad \text{and} \quad 
\beta:=\frac{-\mu + \sqrt{\mu^2 + 2\rho\sigma^2}}{\sigma^2} > 0$$ are the solutions to
$$\frac{1}{2}\sigma^2 \theta^2 + \mu \theta - \rho =0,$$
and the functions $A$ and $B$ are to be determined. 

By applying the condition \eqref{eq:FBP6} to the expression \eqref{eq:solODE} of $v$, we then find that  
\begin{equation}
\label{eq:A0B0}
A(0) + B(0) = 0,
\end{equation}
while using \eqref{eq:FBP5} yields
\begin{equation}
\label{eq:FBP5bis}
A'(i) e^{\alpha i} + B'(i) e^{\beta i} = 0, \quad \text{for all} \quad i>0.
\end{equation}

Now, in order to ensure smoothness of the candidate value function (cf.\ Theorem \ref{thm:verification}, $v$ must be in $C^{2,1}(\mathcal{M}^0)$), we also require that $v(\cdot,i)$ is twice-continuously differentiable across the free boundary $b(i)$. 
By differentiating the expression \eqref{eq:solODE} with respect to $x$ and combining it with the condition \eqref{eq:FBP2}, we have for all $i \geq 0$ that  
\begin{equation}
\label{eq:smothfit1}
v_x(b(i)-,i) = v_x(b(i)+,i)
\quad \Leftrightarrow \quad  \alpha 
A(i) e^{\alpha b(i)} + \beta B(i) e^{\beta b(i)} = e^{-q i},
\end{equation}
and
\begin{equation}
\label{eq:smothfit2}
v_{xx}(b(i)-,i) = v_{xx}(b(i)+,i)
\quad \Leftrightarrow \quad 
\alpha^2 A(i) e^{\alpha b(i)} + \beta^2 B(i) e^{\beta b(i)} = 0.
\end{equation}
Solving \eqref{eq:smothfit1} and \eqref{eq:smothfit2} with respect to $A(i)$ and $B(i)$ we find that
\begin{equation}
\label{eq:AiBi}
A(i)= - \frac{\beta}{\alpha(\alpha - \beta)}e^{-q i - \alpha b(i)} \quad \text{and} \quad B(i)= \frac{\alpha}{\beta(\alpha - \beta)}e^{-q i - \beta b(i)}, 
\quad \text{for all} \quad i \geq 0.
\end{equation}

Hence, by substituting the expressions \eqref{eq:AiBi} of $A$ and $B$ back to the equation \eqref{eq:solODE} of $v$, we obtain 
\begin{equation} \label{eq:vAB}
v(x,i) = - \frac{\beta}{\alpha(\alpha - \beta)} e^{-q i + \alpha (x - b(i))} + \frac{\alpha}{\beta(\alpha - \beta)}e^{-q i + \beta (x - b(i))}, 
\quad \text{for all} \quad  0 \leq i < x < b(i).
\end{equation}
Furthermore, by evaluating the expressions \eqref{eq:AiBi} of $A$ and $B$ for $i=0$ and substituting the resulting expressions back to the condition \eqref{eq:A0B0}, we obtain that (cf.\ \eqref{eq:bo})
\begin{equation} \label{eq:b0}
    b(0) = \frac{1}{\alpha - \beta}\ln\Big(\frac{\beta^2}{\alpha^2}\Big) =  b_\circ > 0, 
\end{equation}
where the latter equality follows from the definition \eqref{eq:bo} of the threshold $b_o$, which defines the optimal dividend policy in the case of no multiplicative time-preference impact (cf.~Section \ref{sec:q=0}).
The resulting positivity follows from the fact that $\alpha<0<\beta$ and $|\alpha|>\beta$.

A straightforward differentiation of the expressions \eqref{eq:AiBi} of $A$ and $B$ and their substitution into the ODE~\eqref{eq:FBP5bis}, then yields the Cauchy problem equation (ODE) for $b$:
\begin{align} \label{eq:ODEb}
\begin{cases}
\begin{split}
b'(i) 
&= \mathcal{B}(b(i),i), \quad i>0, \\
b(0) &= b_\circ, 
\end{split}
\end{cases}
\end{align}
where 
\begin{equation} \label{flow}
\mathcal{B}(b,i) = \frac{q}{\alpha\beta} \bigg( \frac{\alpha^2e^{\beta(i-b)} - \beta^2 e^{\alpha(i-b)}}{\beta e^{\alpha(i-b)} - \alpha e^{\beta(i-b)}}\bigg) .
\end{equation}
We aim at solving this ODE in the domain 
\begin{equation} \label{Db}
D_b := \{(b,i) \in \R^2 \,:\, i > 0 \} .
\end{equation}

\begin{theorem} \label{thm:ODEb}
There exists a unique solution to the Cauchy problem \eqref{eq:ODEb}--\eqref{flow}. 
This solution, still denoted by $b$, is is strictly decreasing on $(0,\infty)$ such that 
$$
\lim_{i \downarrow 0} b'(i)=0
\quad \text{and} \quad
\lim_{i \uparrow \infty} b(i)= -\infty .
$$
\end{theorem}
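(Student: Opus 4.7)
The plan is to treat \eqref{eq:ODEb}--\eqref{flow} as a standard Cauchy problem: establish local well-posedness via Picard-Lindel\"of, then extend to all of $(0, \infty)$ via an invariant-region argument that simultaneously delivers strict monotonicity, and finally read off the asymptotic behavior from the structure of $\mathcal{B}$.

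For local existence and uniqueness, I would first observe that $\mathcal{B}(b,i) = F(i-b)$, where
$$
F(y) := \frac{q}{\alpha\beta} \cdot \frac{\alpha^2 e^{\beta y} - \beta^2 e^{\alpha y}}{\beta e^{\alpha y} - \alpha e^{\beta y}}, \qquad y \in \R.
$$
Since $\alpha < 0 < \beta$, the denominator is a sum of strictly positive terms for every $y \in \R$, so $F \in C^\infty(\R)$ and hence $\mathcal{B}$ is $C^\infty$ on $\R^2$. In particular, $\mathcal{B}$ is locally Lipschitz in $b$ near $(b_\circ, 0)$, so the Picard-Lindel\"of theorem yields a unique $C^1$-solution $b$ on some maximal right-interval $[0, i^*)$ with $i^* \in (0, \infty]$.

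The core of the argument is to establish $i^* = \infty$ together with strict monotonicity. Using \eqref{eq:b0} in the form $e^{(\alpha-\beta)b_\circ} = \beta^2/\alpha^2$, a direct calculation gives $F(-b_\circ) = 0$; a sign analysis (with $\alpha\beta < 0$ and denominator positive) then shows that $F(y) < 0$ if and only if $y > -b_\circ$. I would then introduce $g(i) := b(i) - i - b_\circ$ and note that $g(0) = 0$ and $g'(0) = F(-b_\circ) - 1 = -1$, so $g$ is negative on a right-neighborhood of $0$. Whenever $g(i) < 0$, one has $i - b(i) > -b_\circ$, hence $b'(i) = F(i-b(i)) < 0$, which forces $g'(i) < -1$. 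A standard continuity argument then shows $g < 0$ throughout $(0, i^*)$, proving strict monotonicity. Moreover, $F$ is continuous on $[-b_\circ, \infty)$ and $F(y) \to -q/\beta$ as $y \to \infty$, so $F$ is uniformly bounded on this half-line. Consequently $b'$ stays bounded along the trajectory, precluding blow-up in finite time, so $i^* = \infty$.

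The two limits then follow readily: $\lim_{i\downarrow 0} b'(i) = F(-b_\circ) = 0$ by continuity, and for $\lim_{i\uparrow \infty} b(i) = -\infty$ I argue by contradiction. If $b$ were bounded below by some $M \in \R$, then $i - b(i) \to \infty$, so $b'(i) = F(i-b(i)) \to -q/\beta$, and for all sufficiently large $i$ one would have $b'(i) \leq -q/(2\beta)$; integrating forces $b(i) \to -\infty$, contradicting the assumed lower bound. I expect the most delicate step to be the simultaneous verification of strict monotonicity and global extension: a priori the trajectory could escape in finite time or cross the nullcline $\{b = i + b_\circ\}$, and the invariance argument for $g$ is what closes both issues at once by confining the trajectory to the half-plane $\{i - b > -b_\circ\}$ on which $F$ is uniformly bounded.
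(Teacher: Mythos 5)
Your proof is correct and follows the same basic strategy as the paper: local Picard--Lindel\"of existence, a sign analysis of $\mathcal{B}$ that yields $b' < 0$ iff $b < i + b_\circ$, a persistence argument for this nullcline condition, and a contradiction argument for $\lim_{i\uparrow\infty} b(i) = -\infty$. Where you depart from the paper is in being more explicit about two points that the paper handles tersely. First, you note that $\mathcal{B}(b,i) = F(i-b)$ depends only on the difference $i-b$ and that the denominator $\beta e^{\alpha y} - \alpha e^{\beta y}$ is a sum of strictly positive terms, so $\mathcal{B}$ is in fact $C^\infty$ on all of $\R^2$; the paper restricts attention to the half-plane $D_b = \{i>0\}$ and cites local Lipschitz continuity there, which is unnecessary and slightly obscures the fact that the initial datum sits on $\partial D_b$. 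Second, and more substantively, you supply a genuine argument for global extension: confinement to the region $\{i - b > -b_\circ\}$ via the auxiliary function $g(i) = b(i) - i - b_\circ$ (which satisfies $g(0)=0$, $g'(0)=-1$, and cannot return to zero since $g'=-1$ whenever $g=0$), combined with the observation that $F$ is bounded on $[-b_\circ,\infty)$ because it is continuous and converges to $-q/\beta$. The paper simply cites continuation theorems from Hartman/Piccinini and asserts the conclusion. Your treatment fills in the no-blow-up argument the paper leaves implicit, and the boundedness of $F$ along the trajectory is exactly the estimate needed to invoke those theorems correctly. One small remark: in the final contradiction it is not actually necessary to assume a lower bound on $b$ to get $i - b(i) \to \infty$; since $b$ is decreasing, $i - b(i) \geq i - b_\circ \to \infty$ automatically, so $b'(i) \to -q/\beta < 0$ and integration forces $b \to -\infty$ unconditionally.
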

\begin{proof}
We prove the various parts of the statement separately. 
\vspace{0.15cm}

\emph{Existence and Uniqueness.} 
Given that $\mathcal{B}(b_\circ,0)$ is well-defined (equal to zero thanks to \eqref{limb}), we can simply solve the ODE 
$$ 
b'(i) = \mathcal{B}(b(i),i), \quad \text{for $i>0$},
$$
with initial condition $b(0) = b_\circ$. 
By observing that $\mathcal{B}$ in \eqref{flow} is locally Lipschitz continuous in the open domain $D_{b}$ from \eqref{Db}, it follows that for  there exist a unique strictly decreasing function 
$b(\cdot) : (0,\infty) \to D_b$ that satisfies the ODE with initial condition as in \eqref{eq:ODEb} (see \cite[Theorems I.1.4 and I.1.5]{Piccinini}, or \cite[Theorems 1.1 and 3.1]{Hartman}). 

\vspace{0.15cm}
\emph{Limit properties as $i \downarrow 0$.} 
From the ODE in \eqref{eq:ODEb}--\eqref{flow} it is readily seen that 
\begin{equation} \label{limb}
\lim_{i \downarrow 0} b'(i) 
= \mathcal{B}(b(0),0) 
= \frac{q}{\alpha\beta} \bigg( \frac{\alpha^2e^{- \beta b(0)} - \beta^2 e^{- \alpha b(0)}}{\beta e^{- \alpha b(0)} - \alpha e^{- \beta b(0)}}\bigg) = 0,
\end{equation}
upon recalling the boundary condition $b(0)=b_\circ$ and its expression in \eqref{eq:b0}. 

\vspace{0.15cm}
\emph{Monotonicity.} 
Given that $\alpha<0$ and $\beta>0$, we readily see that the denominator in \eqref{flow} is always positive, hence simple calculations reveal that 
\begin{equation} \label{b'<0}
b'(i) < 0 , \quad i>0
\quad \Leftrightarrow \quad 
b(i) < i + b_\circ, \quad i>0.  
\end{equation}

A combination of the limits $\lim_{i \downarrow 0} b(i)=b_\circ$ and $\lim_{i \downarrow 0} b'(i)=0$ implies that there exists a sufficiently small $\varepsilon>0$, such that $b(i)< i + b_\circ$, for all $i \in (0,\varepsilon)$. 
This further implies thanks to \eqref{b'<0} that $b'(i)<0$, for all $i \in (0,\varepsilon)$, and it is therefore straightforward to see that \eqref{b'<0} will remain true for all $i \geq \varepsilon$ as well. 
This concludes that $i\mapsto b(i)$ is strictly decreasing on $(0,\infty)$. 
\vspace{0.15cm}

\emph{Limit properties as $i \uparrow \infty$.}
The latter solution further satisfies 
$$
\lim_{i \uparrow \infty} b(i) = - \infty .
$$
To see this, assume (aiming for a contradiction) that there exists a finite limit $b_\infty := \lim_{i \uparrow \infty}b(i) \in (-\infty,\infty)$, which implies that $\lim_{i \uparrow \infty}b'(i)=0$. 
However, we observe from \eqref{flow} and straightforward calculations that 
$$
\lim_{i \uparrow \infty}b'(i) 
= \lim_{i \uparrow \infty} \mathcal{B}(b(i),i)  
= - \frac{q}{\beta} < 0,
$$
which provides the desired contradiction and completes the proof. 
\end{proof}

We then define the function $g:[0,\infty) \to \R$, by 
\begin{equation} \label{eq:g}
g(i) := b(i)-i, \quad i \geq 0.
\end{equation}
The usefulness of this function is that its zeros determine the instances that the boundary function $b$ touches the diagonal 
$$
\partial \mathcal{M}^0 = \big\{ (x,i) \in \R^2 :\, x = i \geq 0 \big\}
$$
of the state-space $\mathcal{M}^0$ given by \eqref{eq:setMa-div}.
These zeros are important in the forthcoming analysis, especially in the split of the action region $\mathcal{D}$ into subregions where the control process $\D$ induces different behaviour for the two-dimensional process $(X^\D,I^\D)$; see Section \ref{sec:v} for further details.
In the following lemma we prove that there is only one such zero in this control problem. 
 
\begin{lemma} \label{lem:i^*}
There exists a unique critical minimum surplus value $i^\star$ solving the equation $g(i)=0$, where $g$ is defined by \eqref{eq:g}, which satisfies $i^\star \in (0,b_\circ)$ and further implies that 
$$
b(i) \begin{cases}
> i, & \text{for } i < i^\star, \\
= i, & \text{for } i = i^\star, \\
< i, & \text{for } i > i^\star.
\end{cases}
$$
\end{lemma}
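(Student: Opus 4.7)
The plan is to exploit the strict monotonicity of $b$ established in Theorem~\ref{thm:ODEb} to show that $g(i) = b(i) - i$ is itself strictly monotone, and then locate its zero by the intermediate value theorem.

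First, I would note that $g$ is continuous on $[0,\infty)$, since $b$ is differentiable on $(0,\infty)$ with a well-defined limit at $0$ (namely $b(0)=b_\circ$). Since $b$ is strictly decreasing on $(0,\infty)$ by Theorem~\ref{thm:ODEb}, and $i \mapsto i$ is strictly increasing, $g$ is strictly decreasing on $[0,\infty)$. This immediately gives uniqueness of any zero, and also the sign behaviour claimed in the statement once the zero is located: for any $i < i^\star$ strict monotonicity of $g$ forces $g(i) > g(i^\star) = 0$, hence $b(i) > i$, and analogously $b(i) < i$ for $i > i^\star$.

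It then remains to locate $i^\star$ in $(0,b_\circ)$. Evaluating at the endpoints, $g(0) = b(0) = b_\circ > 0$ by \eqref{eq:b0}, while at $i = b_\circ$, the strict decrease of $b$ gives $b(b_\circ) < b(0) = b_\circ$, hence $g(b_\circ) = b(b_\circ) - b_\circ < 0$. The intermediate value theorem applied to $g$ on $[0, b_\circ]$ then yields a (unique) zero $i^\star \in (0, b_\circ)$.

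No step here is a genuine obstacle: the work was done in Theorem~\ref{thm:ODEb} (strict decrease of $b$, together with $b(0) = b_\circ$ and $b(i) \to -\infty$). The only care needed is to confirm that $g$'s zero lies strictly inside $(0,b_\circ)$ rather than just in $(0,\infty)$, and this is why I would use the endpoint $i = b_\circ$ rather than invoking the limit $b(i) \to -\infty$ at infinity; the latter would give existence but not the sharper location $i^\star < b_\circ$.
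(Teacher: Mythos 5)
Your proposal is correct and follows essentially the same route as the paper's proof: both deduce strict monotonicity of $g$ from the strict decrease of $b$ in Theorem~\ref{thm:ODEb}, check the signs $g(0)=b_\circ>0$ and $g(b_\circ)<0$, and apply the intermediate value theorem to locate the unique zero $i^\star\in(0,b_\circ)$.
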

\begin{proof}
Due to the decreasing property of $b(\cdot)$ in Theorem \ref{thm:ODEb}, we notice that $g$ is strictly decreasing on the whole $[0,\infty)$.

Then, we observe that 
$$
g(0)=b_\circ>0 
\quad \text{and} \quad 
g(b_\circ) = b(b_\circ) - b_\circ <0,
$$
thanks to \eqref{eq:ODEb}, \eqref{eq:b0}, and again the decreasing property of $b(\cdot)$. 

Hence, there exists a unique root of $g$ on $[0,\infty)$, denoted by $i^{\star}$, which satisfies all desired properties in the statement of the lemma and thus completes the proof.
\end{proof}

In light of Lemma \ref{lem:i^*}, we conclude that the boundary $b(\cdot)$ is part of the state-space $\mathcal{M}^0$ only on $[0,i^\star]$. This is an important observation that plays a pivotal role in the forthcoming construction of the value function. 

\subsubsection{Construction of the candidate value function}
\label{sec:v}

In light of the conjectured structure of the state-space $\mathcal{M}^0 = \mathcal{C} \cup \mathcal{D}$ in \eqref{CD} and the results in Lemma \ref{lem:i^*} for the boundary function $b$ defining the regions $\mathcal{C}$ and $\mathcal{D}$, we split the action region $\mathcal{D}$ from \eqref{CD} into two distinct components (cf.\ the critical minimum surplus value $i^\star$ in Lemma \ref{lem:i^*}), given by the subsets
\begin{align} \label{eq:stopping}
\begin{split}
\mathcal{D}_1 &:= \{(x,i)\in \mathcal{M}^0 \,:\, x \geq b(i) > i \,,\; i < i^\star \}, \\
\mathcal{D}_2 &:= \{(x,i)\in \mathcal{M}^0 \,:\, x \geq i \geq b(i) \,,\; i \geq i^\star \},
\end{split}
\end{align}
such that $\mathcal{D} = \mathcal{D}_1 \cup \mathcal{D}_2$ and $\mathcal{D}_1 \cap \mathcal{D}_2 = \emptyset$ (see Figure \ref{Fig2}).
We are now ready to construct a candidate value function. 
To that end, we proceed with studying each component of the state-space $\mathcal{C}$, $\mathcal{D}_1$ and $\mathcal{D}_2$ separately in what follows.

The waiting region $\mathcal{C}$ defined by the associated set in \eqref{CD} is the part of the state-space where it should be optimal not to pay dividends, but the process $(X^{x,\D}, I^{i,\D})$ is reflected in the south-west direction along the diagonal, when $X^{x,\D}=I^{i,\D}$ and $I^{i,\D}$ moves to a new infimum value 
-- this has the effect of changing the value of $b(I^{i,\D})$ as well -- 
until the company's default when $X^{x,\D}=I^{i,\D}=0$. 
In the meantime, upon exiting the waiting region $\mathcal{C}$, i.e.~every time $X^{x,\D} = b(I^{i,\D})$, the manager should pay dividends continuously in a manner such that the process $X^{x,\D}$ is reflected downwards on $b(I^{i,\D})$ in an appropriate way (see Figure \ref{Fig2}, and Section \ref{sec:control} for the rigorous construction of the dividend policy $\D$).
In the waiting region $\mathcal{C}$, we thus take the previously constructed smooth solution to \eqref{eq:FBP1} satisfying 
the reflection condition \eqref{eq:FBP2} at the boundary $b$, 
the reflection condition \eqref{eq:FBP5} along the diagonal, 
and the absorption condition \eqref{eq:FBP6} at the origin, 
which was obtained in \eqref{eq:vAB}. 
We therefore have 
\begin{equation} \label{eq:vonC}
v(x,i) = \frac{e^{-q i}}{\alpha-\beta} \Big(\frac{\alpha}{\beta}e^{\beta(x-b(i))} - \frac{\beta}{\alpha}e^{\alpha(x-b(i))}\Big), 
\quad (x,i)\in \mathcal{C}.
\end{equation}

\begin{figure}
\centering
\begin{tikzpicture}
\begin{scope}[scale=0.75, transform shape]
\coordinate[label = below:$0$] (A) at (0,0){};
\coordinate[label = left:$i^\star$] (B) at (0,3){};
\coordinate[label = below:$i^\star$] (C) at (4,0){};
\coordinate[label = below:$i$] (D) at (8.3,0){};
\coordinate[label = left:$x$] (E) at (0,6){};
\coordinate (F) at (8,1){};
\coordinate (G) at (4,3){};
\coordinate[label=left:$b_\circ$] (H) at (0,4.5){};
\coordinate (I) at (4,6){};

\node at (1.25,2) {$\mathcal{C}$};

\fill[pattern=north east lines, pattern color=gray!40] (E) -- (H) -- (H) .. controls +(1,0) and +(-1,0.2) .. (G) -- (I) -- cycle; 
\node at (2,5) {$\mathcal{D}_1$};

\fill[pattern=north west lines, pattern color=gray!40] (I) -- (G) -- (8,6) -- cycle; 
\node at (5,5) {$\mathcal{D}_2$};

\draw[very thick, -{Stealth}] (A) -- (8.3,0);
\draw[very thick, -{Stealth}] (A) -- (0,6.2);
\draw[very thick] (A) -- (8,6);
\node at (7.5,5) {$x=i$};

\draw[dashed, thick] (B) -- (G);
\draw[dashed, thick] (C) -- (I);

\coordinate (X) at (2,2.5){};
\draw[fill=blue] (X) circle (1pt);
\node at (2.5,2.5) {$(x,i)$};

\draw[thick, blue] (X) -- (2,3.5);
\draw[thick, blue] (X) -- (2,1.5);
\draw[thick, blue] (2,1.5) -- (A);
\draw[thick, blue] (1.8,1.35) -- (1.8,3.4);
\draw[thick, blue] (1.5,1.125) -- (1.5,4.05);
\draw[thick, blue] (0.9,0.675) -- (0.9,4.3);
\draw[thick, blue] (0.8,0.6) -- (0.8,3.6);
\draw[thick, blue] (0.6,0.45) -- (0.6,4.4);
\draw[thick, blue] (0.2,0.15) -- (0.2,2.15);

\draw[thick, red] (H) .. controls +(1,0) and +(-1,0.2) .. (G);
\draw[thick, red] (G) .. controls +(1,-0.2) and +(-1,1) .. (8,1);

\node at (7,1) {$x=b(i)$};

\draw[fill=black] (G) circle (1pt);
\draw[fill=black] (H) circle (1pt);

\end{scope}
\end{tikzpicture}
\begin{tikzpicture}
\begin{scope}[scale=0.75, transform shape]
\coordinate[label = below:$0$] (A) at (0,0){};
\coordinate[label = left:$i^\star$] (B) at (0,3){};
\coordinate[label = below:$i^\star$] (C) at (4,0){};
\coordinate[label = below:$i$] (D) at (8.3,0){};
\coordinate[label = left:$x$] (E) at (0,6){};
\coordinate (F) at (8,1){};
\coordinate (G) at (4,3){};
\coordinate[label=left:$b_\circ$] (H) at (0,4.5){};
\coordinate (I) at (4,6){};

\node at (1.25,2) {$\mathcal{C}$};

\fill[pattern=north east lines, pattern color=gray!40] (E) -- (H) -- (H) .. controls +(1,0) and +(-1,0.2) .. (G) -- (I) -- cycle; 
\node at (2,5) {$\mathcal{D}_1$};

\fill[pattern=north west lines, pattern color=gray!40] (I) -- (G) -- (8,6) -- cycle; 
\node at (5,5) {$\mathcal{D}_2$};

\draw[very thick, -{Stealth}] (A) -- (8.3,0);
\draw[very thick, -{Stealth}] (A) -- (0,6.2);
\draw[very thick] (A) -- (8,6);
\node at (7.5,5) {$x=i$};

\draw[dashed, thick] (B) -- (G);
\draw[dashed, thick] (C) -- (I);

\coordinate (X) at (2.5,5){};
\draw[fill=blue] (X) circle (1pt);
\node at (3,5) {$(x,i)$};

\draw[thick, blue] (X) -- (2.5,1.875);
\draw[thick, blue] (2.5,1.875) -- (A);
\draw[thick, blue] (2.4,1.8) -- (2.4,3.63);
\draw[thick, blue] (2.2,1.65) -- (2.2,3.73);
\draw[thick, blue] (2.05,1.5375) -- (2.05,3.8);
\draw[thick, blue] (1.8,1.35) -- (1.8,3.4);
\draw[thick, blue] (1.5,1.125) -- (1.5,4.05);
\draw[thick, blue] (0.9,0.675) -- (0.9,4.3);
\draw[thick, blue] (0.8,0.6) -- (0.8,3.6);
\draw[thick, blue] (0.6,0.45) -- (0.6,4.4);
\draw[thick, blue] (0.2,0.15) -- (0.2,2.15);

\draw[thick, red] (H) .. controls +(1,0) and +(-1,0.2) .. (G);
\draw[thick, red] (G) .. controls +(1,-0.2) and +(-1,1) .. (8,1);

\node at (7,1) {$x=b(i)$};

\draw[fill=black] (G) circle (1pt);
\draw[fill=black] (H) circle (1pt);
\end{scope}
\end{tikzpicture}
\begin{tikzpicture}
\begin{scope}[scale=0.75, transform shape]
\coordinate[label = below:$0$] (A) at (0,0){};
\coordinate[label = left:$i^\star$] (B) at (0,3){};
\coordinate[label = below:$i^\star$] (C) at (4,0){};
\coordinate[label = below:$i$] (D) at (8.3,0){};
\coordinate[label = left:$x$] (E) at (0,6){};
\coordinate (F) at (8,1){};
\coordinate (G) at (4,3){};
\coordinate[label=left:$b_\circ$] (H) at (0,4.5){};
\coordinate (I) at (4,6){};

\node at (1.25,2) {$\mathcal{C}$};

\fill[pattern=north east lines, pattern color=gray!40] (E) -- (H) -- (H) .. controls +(1,0) and +(-1,0.2) .. (G) -- (I) -- cycle; 
\node at (2,5) {$\mathcal{D}_1$};

\fill[pattern=north west lines, pattern color=gray!40] (I) -- (G) -- (8,6) -- cycle; 
\node at (5,5) {$\mathcal{D}_2$};

\draw[very thick, -{Stealth}] (A) -- (8.3,0);
\draw[very thick, -{Stealth}] (A) -- (0,6.2);
\draw[very thick] (A) -- (8,6);
\node at (7.5,5) {$x=i$};

\draw[dashed, thick] (B) -- (G);
\draw[dashed, thick] (C) -- (I);

\coordinate (X) at (6,4.5){};
\draw[fill=blue] (X) circle (1pt);
\node at (6.35,4.25) {$(x,i)$};

\draw[thick, blue] (X) -- (4.5,3.375);
\draw[thick, blue] (4.5,3.375) -- (A);
\draw[thick, blue] (3.8,2.85) -- (3.8,3.05);
\draw[thick, blue] (3.5,2.625) -- (3.5,3.16);
\draw[thick, blue] (3.1,2.325) -- (3.1,3.32);
\draw[thick, blue] (3,2.25) -- (3,3.1);
\draw[thick, blue] (2.6,1.95) -- (2.6,3.2);
\draw[thick, blue] (2.4,1.8) -- (2.4,3.4);
\draw[thick, blue] (2.2,1.65) -- (2.2,3.73);
\draw[thick, blue] (2.05,1.5375) -- (2.05,3.8);
\draw[thick, blue] (1.8,1.35) -- (1.8,3.4);
\draw[thick, blue] (1.5,1.125) -- (1.5,4.05);
\draw[thick, blue] (0.9,0.675) -- (0.9,4.3);
\draw[thick, blue] (0.8,0.6) -- (0.8,3.6);
\draw[thick, blue] (0.6,0.45) -- (0.6,4.4);
\draw[thick, blue] (0.2,0.15) -- (0.2,2.15);

\draw[thick, red] (H) .. controls +(1,0) and +(-1,0.2) .. (G);
\draw[thick, red] (G) .. controls +(1,-0.2) and +(-1,1) .. (8,1);

\node at (7,1) {$x=b(i)$};

\draw[fill=black] (G) circle (1pt);
\draw[fill=black] (H) circle (1pt);

\end{scope}
\end{tikzpicture}
\begin{tikzpicture}
\begin{scope}[scale=0.75, transform shape]
\coordinate[label = below:$0$] (A) at (0,0){};
\coordinate[label = left:$i^\star$] (B) at (0,3){};
\coordinate[label = below:$i^\star$] (C) at (4,0){};
\coordinate[label = below:$i$] (D) at (8.3,0){};
\coordinate[label = left:$x$] (E) at (0,6){};
\coordinate (F) at (8,1){};
\coordinate (G) at (4,3){};
\coordinate[label=left:$b_\circ$] (H) at (0,4.5){};
\coordinate (I) at (4,6){};

\node at (1.25,2) {$\mathcal{C}$};

\fill[pattern=north east lines, pattern color=gray!40] (E) -- (H) -- (H) .. controls +(1,0) and +(-1,0.2) .. (G) -- (I) -- cycle; 
\node at (2,5) {$\mathcal{D}_1$};

\fill[pattern=north west lines, pattern color=gray!40] (I) -- (G) -- (8,6) -- cycle; 
\node at (5,5) {$\mathcal{D}_2$};

\draw[very thick, -{Stealth}] (A) -- (8.3,0);
\draw[very thick, -{Stealth}] (A) -- (0,6.2);
\draw[very thick] (A) -- (8,6);
\node at (7.5,5) {$x=i$};

\draw[dashed, thick] (B) -- (G);
\draw[dashed, thick] (C) -- (I);

\coordinate (X) at (4.5,4.5){};
\draw[fill=blue] (X) circle (1pt);
\node at (5,4.5) {$(x,i)$};

\draw[thick, blue] (X) -- (4.5,3.375);
\draw[thick, blue] (4.5,3.375) -- (A);
\draw[thick, blue] (3.8,2.85) -- (3.8,3.05);
\draw[thick, blue] (3.5,2.625) -- (3.5,3.16);
\draw[thick, blue] (3.1,2.325) -- (3.1,3.32);
\draw[thick, blue] (3,2.25) -- (3,3.1);
\draw[thick, blue] (2.6,1.95) -- (2.6,3.2);
\draw[thick, blue] (2.4,1.8) -- (2.4,3.4);
\draw[thick, blue] (2.2,1.65) -- (2.2,3.73);
\draw[thick, blue] (2.05,1.5375) -- (2.05,3.8);
\draw[thick, blue] (1.8,1.35) -- (1.8,3.4);
\draw[thick, blue] (1.5,1.125) -- (1.5,4.05);
\draw[thick, blue] (0.9,0.675) -- (0.9,4.3);
\draw[thick, blue] (0.8,0.6) -- (0.8,3.6);
\draw[thick, blue] (0.6,0.45) -- (0.6,4.4);
\draw[thick, blue] (0.2,0.15) -- (0.2,2.15);

\draw[thick, red] (H) .. controls +(1,0) and +(-1,0.2) .. (G);
\draw[thick, red] (G) .. controls +(1,-0.2) and +(-1,1) .. (8,1);

\node at (7,1) {$x=b(i)$};

\draw[fill=black] (G) circle (1pt);
\draw[fill=black] (H) circle (1pt);

\end{scope}
\end{tikzpicture}
\captionsetup{width=1\textwidth}
\caption{\footnotesize Schematic depiction of the movement (blue) of the process $(X^{x,\rm D},I^{x,\rm D})$ started from $(x,i) \in \mathcal{M}^0 = \mathcal{C} \cup \mathcal{D}_1 \cup \mathcal{D}_2$ given by \eqref{eq:setMa-div}, \eqref{CD} and \eqref{eq:stopping} for the problem \eqref{eq:value-div} with $\mu>0$. 
Top left panel: The process $(X^{x,\rm D},I^{x,\rm D})$ started from $(x,i)\in \mathcal{C}$ is reflected either downwards at the boundary function $x = b(i)$ (red) or in the south-west direction at the diagonal $x=i$ (black), until it is absorbed at the origin.
Top right panel: The process $(X^{x,\rm D},I^{x,\rm D})$ started from $(x,i)\in \mathcal{D}_1$ jumps downwards to $(b(i),i)$ at time $0$ and then continues as in top left panel.
Bottom left panel: The process $(X^{x,\rm D},I^{x,\rm D})$ started from $(x,i)\in \mathcal{D}_2$ jumps downwards in the south-west direction to $(b(i^\star),i^\star) = (i^\star,i^\star)$ at time $0$ and then continues as in top left panel.
Bottom right panel: The process $(X^{x,\rm D},I^{x,\rm D})$ started from $(x,i)\in \mathcal{D}_2$ jumps downwards in a `hockey-stick' direction to $(b(i^\star),i^\star)=(i^\star,i^\star)$ at time $0$ and then continues as in top left panel.}
\label{Fig2}
\end{figure}
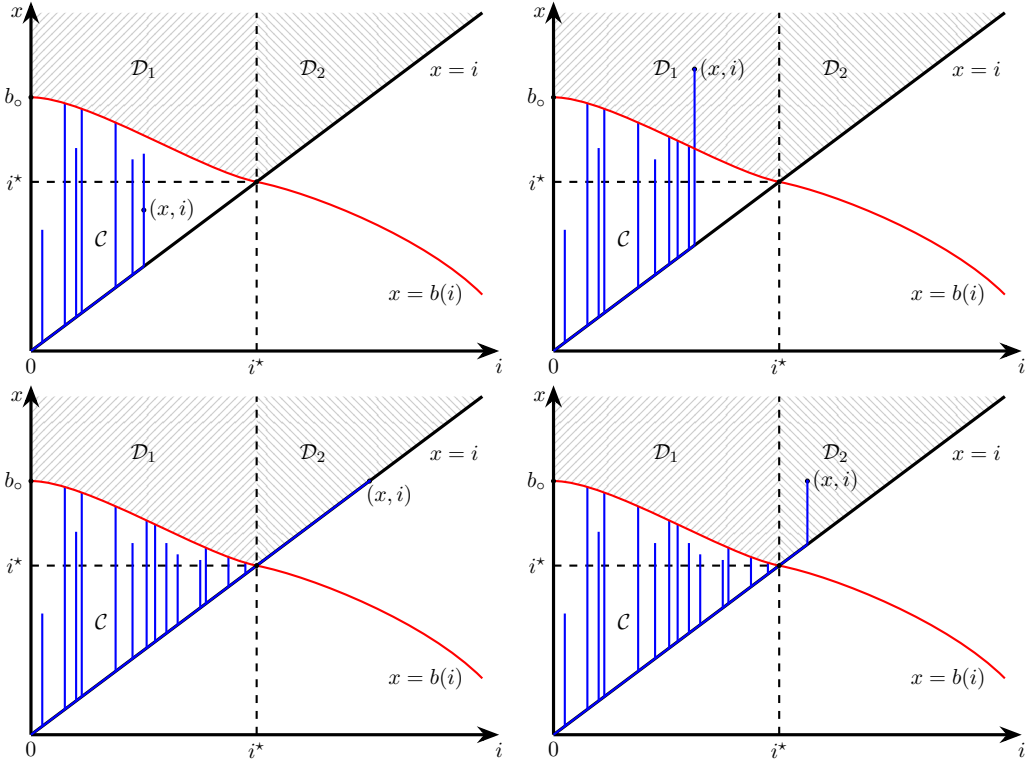

Notice then that the component $\mathcal{D}_1$ of the action region corresponds to the Scenario (a) in Section \ref{sec:processsetting}. 
In particular, if the surplus process $X^{x,D}$ starts from a value $x$ and has a historical worst performance $i$, such that $(x,i) \in \mathcal{D}_1$, 
then the company should pay a lump-sum dividend at time $0$ of a relatively small size 
$$
\Delta\D_0 = x-b(i) \in [0, x-i) 
\quad \text{and} \quad 
(X^{x,\D}_{0}, I^{i,\D}_{0}) = (b(i), i) \in \mathcal{M}^0 \setminus \partial \mathcal{M}^0,
$$
since $X^{x,\D}_{0} = b(i) > i$.  
In other words, the process $(X^{x,\D}, I^{i,\D})$ jumps downwards in the $x$-axis from $(x,i)$ to $(b(i),i)$ at time $0$, and then continues as for $v(b(i),i)$, which by continuity is equal to the corresponding expression \eqref{eq:vonC} (see top right panel in Figure \ref{Fig2}). 

In view of the above, we can use Definition \ref{int:D} for the initial jump to define 
\begin{align} \label{eq:vonD1}
\begin{split}
v(x,i) 
&:= \int_0^{\Delta \D_0} e^{ - q i} \ \d u + v(b(i),i) \\ 
&= e^{-q i} \big(x-b(i) \big) + v(b(i),i) 
= e^{-q i} \Big(x - b(i) + \frac{\mu}{\rho}\Big), \quad (x,i)\in \mathcal{D}_1. 
\end{split}
\end{align}
where the last equality can be easily derived from \eqref{eq:vonC} and the definitions of $\alpha$ and $\beta$,~which~yields
$$
v(b(i), i)
= \frac{\alpha+\beta}{\alpha \beta} \,  e^{-q i} 
= \frac{\mu}{\rho} \,  e^{-q i}. 
$$

We finally focus on the component $\mathcal{D}_2$ of the action region, which, on the contrary, corresponds to the Scenarios (b)--(c) in Section \ref{sec:processsetting}. 
In particular, if the surplus process $X^{x,D}$ starts from a value $x$ and has a historical worst performance $i$, such that $(x,i) \in \mathcal{D}_2$, 
then the company should pay a lump-sum dividend at time $0$ of a relatively large size 
$$
\Delta\D_0 = x-i^\star \in (x-i, x) 
\quad \text{and} \quad 
(X^{x,\D}_{0}, I^{i,\D}_{0}) = (b(i^\star), i^\star) \in \partial \mathcal{M}^0,
$$
since $X^{x,\D}_{0} = b(i^\star) < i$. 
In other words, the process $(X^{x,\D}, I^{i,\D})$ moves either downwards in a 45$^\circ$-angle along the diagonal $\partial \mathcal{M}$ from $(i,i)$ to $(i^\star,i^\star)$ if $x=i$ (see bottom left panel in Figure \ref{Fig2}), 
or downwards in a `hockey-stick' direction, firstly in the $x$-axis from $(x,i)$ to $(i,i)$ and subsequently in a 45$^\circ$-angle along the diagonal $\partial \mathcal{M}$ from $(i,i)$ to $(i^\star,i^\star)$ if $x>i$ (see bottom right panel in Figure \ref{Fig2}). 
In either case, it then continues as for $v(i^\star,i^\star)$, which by continuity is equal to the corresponding expression in \eqref{eq:vonC}.

In view of the above, we can use Definition \ref{int:D} for the initial jump to define 
\begin{align} \label{eq:vonD2}
\begin{split}
v(x,i) 
&:= \int_0^{x-i} e^{ - q i} \ \d u + \int_{x-i}^{\Delta \D_0} e^{ - q (x-u)} \ \d u + v(b(i^\star),i^\star) \\
&= e^{-q i}(x-i) + \frac{1}{q} \big( e^{-q i^\star} - e^{-q i} \big) + \frac{\mu}{\rho} \,  e^{-q i^\star} \\
&= e^{-q i} \Big(x - i - \frac{1}{q}\Big) + \Big( \frac{1}{q} + \frac{\mu}{\rho} \Big) e^{-q i^\star}, \quad (x,i)\in \mathcal{D}_2,
\end{split}
\end{align}
where the second equality can be easily derived from \eqref{eq:vonC}, the property $b(i^\star)=i^\star$ and the definitions of $\alpha$ and $\beta$, which imply
$$
v(b(i^\star), i^\star)
= v(i^\star,i^\star) 
= \frac{\alpha+\beta}{\alpha \beta} \,  e^{-q i^\star} 
= \frac{\mu}{\rho} \,  e^{-q i^\star}. 
$$

Collecting together the expressions \eqref{eq:vonC}, \eqref{eq:vonD1} and \eqref{eq:vonD2} derived above for our candidate value function $v$ in the waiting region $\mathcal{C}$ and action region $\mathcal{D}_1 \cup \mathcal{D}_2$, we have  
\begin{equation} \label{eq:v}
v(x,i) = \begin{cases} 
\frac{1}{\alpha-\beta} \, e^{-q i} \big(\frac{\alpha}{\beta}e^{\beta(x-b(i))} - \frac{\beta}{\alpha}e^{\alpha(x-b(i))}\big), &\quad (x,i) \in \mathcal{C}, \\
e^{-q i} \big(x - b(i) + \frac{\mu}{\rho}\big), &\quad (x,i)\in \mathcal{D}_1, \\
e^{-q i} \big(x - i - \frac{1}{q}\big) + \big( \frac{1}{q} + \frac{\mu}{\rho} \big) e^{-q i^\star}, &\quad (x,i)\in \mathcal{D}_2.
\end{cases}
\end{equation}

\begin{theorem} \label{thm:vsolvesHJB}
The couple $(v,b):(0,\infty)^2 \times (0,\infty) \to [0,\infty) \times (-\infty,b_\circ)$ defined through \eqref{eq:v} and the unique solution to the ODE \eqref{eq:ODEb}--\eqref{flow} in Theorem \ref{thm:ODEb} 
solves the free-boundary problem \eqref{eq:FBP1}-\eqref{eq:FBP6}. 
\end{theorem}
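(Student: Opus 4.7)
The plan is to verify the six conditions \eqref{eq:FBP1}--\eqref{eq:FBP6} in turn, organising the work so that the built-in equalities are dispatched first, boundary and smooth-fit conditions next, and the two genuine inequalities last. The key observation is that $v$ and $b$ were constructed precisely so that the ODE, the smooth-fit at $x=b(i)$, and the Neumann/Dirichlet data hold by design, so the substantive content of the theorem lies in the two gradient/operator inequalities \eqref{eq:FBP3} and \eqref{eq:FBP4}.

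First I would dispatch the trivial equalities. The ODE \eqref{eq:FBP1} holds automatically on $\mathcal{C}$ because the expression \eqref{eq:vonC} is by construction a linear combination of $e^{\alpha x}$ and $e^{\beta x}$ with $i$-dependent coefficients, and $\alpha,\beta$ are precisely the roots of $\tfrac{1}{2}\sigma^2\theta^2 + \mu\theta - \rho = 0$. The gradient equality \eqref{eq:FBP2} is immediate from the piecewise formulas \eqref{eq:vonD1} and \eqref{eq:vonD2}, which are affine in $x$ with slope exactly $e^{-qi}$. The Dirichlet condition \eqref{eq:FBP6} reduces, upon substituting $x=i=0$ in \eqref{eq:vonC}, to the identity $\tfrac{\alpha}{\beta}e^{-\beta b_\circ} = \tfrac{\beta}{\alpha}e^{-\alpha b_\circ}$, which is precisely the definition \eqref{eq:b0} of $b_\circ$.

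Next, I would verify the Neumann condition \eqref{eq:FBP5}, splitting into two cases using Lemma \ref{lem:i^*}. When $i \in (0,i^\star)$, the diagonal point $(i,i)$ lies in $\mathcal{C}$ since $b(i)>i$, so writing $v$ via \eqref{eq:solODE} with the coefficients \eqref{eq:AiBi} gives $v_i(i,i) = A'(i)e^{\alpha i} + B'(i)e^{\beta i}$; this vanishes precisely by \eqref{eq:FBP5bis}, which is equivalent to the ODE \eqref{eq:ODEb} that $b$ solves by Theorem \ref{thm:ODEb}. When $i \geq i^\star$, the diagonal lies in $\mathcal{D}_2$ and one reads off directly from \eqref{eq:vonD2} that $v_i(i,i) = -qe^{-qi}(-1/q) - e^{-qi} = 0$. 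The $C^{2,1}$-regularity across the curve $x=b(i)$ for $i \in (0,i^\star)$ is built into the construction via the smooth-fit conditions \eqref{eq:smothfit1}--\eqref{eq:smothfit2}, and matching between $\mathcal{C}$, $\mathcal{D}_1$ and $\mathcal{D}_2$ at the gluing point $(i^\star,i^\star)$ follows from continuity of $b$ and the identity $b(i^\star)=i^\star$.

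The substantive content is the pair of inequalities \eqref{eq:FBP3}--\eqref{eq:FBP4}. For \eqref{eq:FBP4} on $\mathcal{C}$, I would substitute $y := b(i)-x \in (0,b(i)-i]$, reducing the claim to $\psi(y) := \tfrac{1}{\alpha-\beta}(\alpha e^{-\beta y} - \beta e^{-\alpha y}) - 1 \geq 0$; since $\psi(0)=0$ and $\psi'(y) = \tfrac{\alpha\beta}{\alpha-\beta}(e^{-\alpha y} - e^{-\beta y}) > 0$ for $y>0$ (using $\alpha<0<\beta$ together with $\alpha-\beta<0$), the inequality follows; on $\mathcal{D}_1 \cup \mathcal{D}_2$ the gradient constraint holds with equality thanks to \eqref{eq:FBP2}. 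For \eqref{eq:FBP3}, equality holds on $\mathcal{C}$ by \eqref{eq:FBP1}; on $\mathcal{D}_1$, since $v$ is affine in $x$ there, the HJB operator evaluates to $-\rho e^{-qi}(x-b(i)) \leq 0$ because $x \geq b(i)$; on $\mathcal{D}_2$, an analogous direct computation yields $(\mu + \rho/q)(e^{-qi} - e^{-qi^\star}) - \rho e^{-qi}(x-i) \leq 0$, since $i \geq i^\star$ and $x \geq i$. I had initially expected \eqref{eq:FBP4} to be the main obstacle, but the substitution $y = b(i)-x$ decouples the two variables and reduces it to a single-variable monotonicity check, making the whole verification streamlined.
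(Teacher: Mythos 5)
Your proposal is correct and takes essentially the same approach as the paper: the paper also reduces the verification to the two inequalities $v_x \geq e^{-qi}$ on $\mathcal{C}$ and $[(\mathcal{L}_X-\rho)v]\leq 0$ on $\mathcal{D}_1\cup\mathcal{D}_2$, proving the first by the same one-variable monotonicity argument (the paper writes it as $F(x,i):=\alpha e^{\beta(x-b(i))}-\beta e^{\alpha(x-b(i))}$ with $F_x>0$, which is your $\psi$ under the change of variable $y=b(i)-x$) and the second by the same direct computations yielding $-\rho e^{-qi}(x-b(i))$ on $\mathcal{D}_1$ and $-(\rho/q+\mu)(e^{-qi^\star}-e^{-qi})-\rho e^{-qi}(x-i)$ on $\mathcal{D}_2$. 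The only difference is that you spell out the "by construction" equalities and the two-case Neumann check explicitly, whereas the paper dismisses those in a single sentence.
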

\begin{proof}
By construction, it is enough to check that:
\begin{equation*}
(i) \quad v_x(x,i)\geq e^{-qi} \text{ on } \mathcal{C} 
\quad \text{and} \quad 
(ii) \quad [(\mathcal{L}_X - \rho)v](x,i) \leq 0 \text{ on } \mathcal{D}_1 \cup \mathcal{D}_2.
\end{equation*}
We prove these inequalities in the following three steps. 

\vspace{1mm}
{\it Step 1: Proof of} $(i)$. 
Direct computations reveal that for any $(x,i) \in \mathcal{C}$, we have the equivalence
$$
v_x(x,i)\geq e^{-qi} \quad 
\Leftrightarrow \quad 
\alpha e^{\beta(x-b(i))} - \beta e^{\alpha(x-b(i))} \leq \alpha-\beta.
$$
We then define $F:\overline{\mathcal{C}} \to \R$ by
$$
F(x,i):= \alpha e^{\beta(x-b(i))} - \beta e^{\alpha(x-b(i))}
$$
and observe that 
$F(b(i),i) = \alpha - \beta$ 
and  $$
F_x(x,i) = \alpha\beta (e^{\beta(x-b(i))} - e^{\alpha(x-b(i))}) >0, \quad (x,i) \in \mathcal{C}, 
$$
due to the inequalities $\beta>0>\alpha$. 
Hence, we conclude that $F(x,i) < \alpha - \beta$ on $\mathcal{C}$, which implies the required inequality.

\vspace{1mm}
{\it Step 2: Proof of} $(ii)$ {\it on $\mathcal{D}_1$}.
Recall the expression \eqref{eq:vonD1} of $v$ on $\mathcal{D}_1$, which yields that $v_x(x,i)=e^{-qi}$ and $v_{xx}(x,i)=0$. 
Hence,
\begin{align} \label{eq:LonD1}
\begin{split}
[(\mathcal{L}_X - \rho)v](x,i) 
&= \mu e^{-qi} - \rho e^{-qi} \Big(x - b(i) + \frac{\mu}{\rho}\Big) \\
&= - \rho e^{-qi} (x-b(i)) \leq 0, \quad (x,i) \in \mathcal{D}_1,
\end{split}
\end{align}
where we have used that $x \geq b(i)$ on $\mathcal{D}_1$.

\vspace{1mm}
{\it Step 3: Proof of} $(ii)$ {\it on $\mathcal{D}_2$}. 
Recall now the expression \eqref{eq:vonD2} of $v$ on $\mathcal{D}_2$, which also yields that $v_x(x,i)=e^{-qi}$ and $v_{xx}(x,i)=0$. 
Hence, 
\begin{align} \label{eq:LonD2}
\begin{split}
[(\mathcal{L}_X - \rho)v](x,i) 
&= \mu e^{-qi} - \rho\Big[e^{-q i} \Big(x - i - \frac{1}{q}\Big) + \Big( \frac{1}{q} + \frac{\mu}{\rho} \Big) e^{-q i^\star} \Big] \\
&= -\Big(\frac{\rho}{q}+\mu\Big)\big(e^{-q i^\star} - e^{-q i}\big) -\rho e^{-q i}(x-i)\leq 0, \quad (x,i) \in \mathcal{D}_2,
\end{split}
\end{align}
where we have used that $x \geq i$ and that $i \geq i^\star$ on $\mathcal{D}_2$.
\end{proof}

\subsubsection{Construction of the candidate optimal control}
\label{sec:control}

We are now ready to construct a candidate optimal control $\D^\star$. 
To that end, recalling the structure of the action region $\mathcal{D}_1 \cup \mathcal{D}_2$ in \eqref{eq:stopping}, we define the potentially optimal initial jump of the control $\D^\star$ by 
\begin{equation} \label{OC-time0}
\D^{\star}_{0^-}:=0 
\quad \text{and} \quad 
\D^{\star}_0:=\big(x-b(i) \big) \mathds{1}_{\{(x,i) \in \mathcal{D}_1\}} + \big( x-b(i^{\star}) \big) \mathds{1}_{\{(x,i) \in \mathcal{D}_2\}}.
\end{equation}

Then, for $k\geq 1$, we define $\P_{x,i}$-a.s.\ the process $\D^\star$ by 
\begin{align} \label{eq:candidateOC}
\begin{split}
& \D^{\star}_t := \sup_{s \in [0,t]} \big( x + \mu s + \eta W_s  - b(i)\big)^+, \quad t \in [0,\lambda_{1}], \\
& \D^{\star}_t :=\D^{\star}_{\lambda_k}, \quad t \in [\lambda_{k},\gamma_k] \\
& \D^{\star}_t := \D^{\star}_{\gamma_{k}}  + 
\sup_{s \in [\gamma_k,t]} \big( X^{\D^{\star}}_{\gamma_k} + \mu \big(s-\gamma_k \big) + \eta \big(W_s - W_{\gamma_{k}} \big) - b(I^{\D^{\star}}_{\gamma_k})\big)^+, \quad t \in [\gamma_{k},\lambda_{k+1} ], 
\end{split}
\end{align} 
where the controlled process $X^{\D^{\star}}_t$ is given by \eqref{XI}, and where we have set $\gamma_0=0$ and, for $k\geq 1$,
\begin{align*}
\lambda_k &:= \inf\{t>\gamma_{k-1}:\, X^{D^{\star}}_t = I^{D^{\star}}_{\gamma_{k-1}}\}, \\
\gamma_k &:= \inf\{t>\lambda_k:\, X^{D^{\star}}_t = b(i)\},
\end{align*}
with the usual convention $\inf\emptyset = \infty$.

\begin{proposition} 
\label{prop:OC}
For any $(x,i)\in \mathcal{M}$, recall the waiting region $\mathcal{C}$ defined by \eqref{CD}, the set $\mathcal{A}(x,i)$ of admissible controls in \eqref{set:A} and the control $\D^\star$ defined by \eqref{OC-time0}--\eqref{eq:candidateOC}. 
Then, we have that $\D^\star \in \mathcal{A}(x,i)$. 
Furthermore, we have on $\{t < \tau^{\D^\star}_0\}$ that 
\begin{equation}  \label{eq:SkcondOC-1}
\big(X^{\D^\star}_t, I^{\D^\star}_t \big) \in \mathcal{C}, \quad \P_{x,i} \otimes \d t-\text{a.e.}
\end{equation}
and, $\P_{x,i}$-a.s., that
\begin{equation} \label{eq:SkcondOC-2}
\int_{[0,\tau^{\D^\star}_0]} \mathds{1}_{\{X^{\D^{\star}}_{u^-} < b(I^{\D^{\star}}_{u})\}} \d \D^{\star}_u 
= \int_0^{\Delta \D^{\star}_t} \hspace{-2mm} \mathds{1}_{\{X^{\D^\star}_{t^-} -\zeta < b(I^{\D^{\star}}_{t})\}} \d\zeta = 0 .
\end{equation}
\end{proposition}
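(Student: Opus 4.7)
The plan is to verify three things: that $\D^\star$ is a well-defined admissible control, that \eqref{eq:SkcondOC-1} holds, and that \eqref{eq:SkcondOC-2} holds.

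For the first, I first observe that on each reflection interval $[\gamma_{k-1}, \lambda_k]$ the infimum $I^{\D^\star}$ remains constant, so the formula in \eqref{eq:candidateOC} solves the classical one-dimensional Skorokhod problem at the fixed upper level $b(I^{\D^\star}_{\gamma_{k-1}})$, while on the frozen intervals $[\lambda_k, \gamma_k]$ the control is constant and $X^{\D^\star}$ is simply a drifted Brownian motion (whose infimum then updates $I^{\D^\star}$). Using the strong Markov property at each $\gamma_k$ together with the continuity and strict monotonicity of $b$ (Theorem \ref{thm:ODEb}, Lemma \ref{lem:i^*}), the sequence $(\lambda_k, \gamma_k)$ is well-defined and $\P_{x,i}$-almost surely does not accumulate before $\tau^{\D^\star}_0$; gluing the pieces yields $\D^\star \in \mathcal{S}$ with its only jump at $t=0$ prescribed by \eqref{OC-time0}. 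Admissibility then follows by checking \eqref{set:A} with $a \equiv 0$: at $t=0$ one has $X^{\D^\star}_{0^-} - \Delta \D^\star_0 = b(i) >0$ in case $(x,i)\in \mathcal{D}_1$ and $= b(i^\star) = i^\star >0$ in case $(x,i)\in \mathcal{D}_2$, while for $t>0$ the process has no jumps and the reflection keeps $X^{\D^\star} \geq 0$ until absorption. The integrability condition reduces, via $e^{-q I^{\D^\star}_t} \leq 1$, to $\E_{x,i}[\int_0^\infty e^{-\rho t} \d \D^\star_t] < \infty$; this is standard since $\D^\star_t$ is dominated pathwise by the Skorokhod reflection of $x + \mu t + \eta W_t$ at the fixed level $b_\circ$.

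For \eqref{eq:SkcondOC-1}, by construction $(X^{\D^\star}_t, I^{\D^\star}_t) \in \overline{\mathcal{C}}$ on $\{t < \tau^{\D^\star}_0\}$, so it suffices to show that the boundary set $\{t : X^{\D^\star}_t = b(I^{\D^\star}_t)\} \cup \{t : X^{\D^\star}_t = I^{\D^\star}_t, X^{\D^\star}_t > 0\}$ has zero Lebesgue measure. On each reflection or frozen sub-interval, $X^{\D^\star}$ is a continuous semimartingale with non-degenerate diffusion coefficient $\eta>0$, so by the occupation times formula the Lebesgue time spent at any fixed level is zero $\P_{x,i}$-a.s.; countable additivity over the intervals concludes.

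For \eqref{eq:SkcondOC-2}, by the very definition of the Skorokhod reflection in \eqref{eq:candidateOC}, $\d \D^{\star, c}$ is supported on $\{t : X^{\D^\star}_t = b(I^{\D^\star}_t)\}$ within each reflection interval and identically vanishes on frozen intervals, so the integrand $\mathds{1}_{\{X^{\D^\star}_{u^-} < b(I^{\D^\star}_u)\}}$ is zero $\d \D^{\star, c}$-almost everywhere. The only jump of $\D^\star$ occurs at $t=0$: in case $(x,i)\in \mathcal{D}_1$, $\Delta \D^\star_0 = x - b(i)$ and $I^{\D^\star}_0 = i$, so $x - \zeta \geq b(i)$ for every $\zeta \in [0, \Delta \D^\star_0]$ with equality only at the right endpoint; in case $(x,i)\in \mathcal{D}_2$, $\Delta \D^\star_0 = x - b(i^\star) = x - i^\star$ and $I^{\D^\star}_0 = i^\star$, so again $x - \zeta \geq b(i^\star) = i^\star$ throughout with equality only at the right endpoint. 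In both cases the Lebesgue integral in the second expression of \eqref{eq:SkcondOC-2} vanishes, as claimed. The most delicate step is the well-posedness and non-accumulation of the sequence $(\lambda_k, \gamma_k)$ before $\tau^{\D^\star}_0$, since the reflection level $b(I^{\D^\star}_{\gamma_k})$ resets discretely each time the infimum decreases; this is handled by standard Brownian recurrence estimates on the frozen intervals combined with the strict monotonicity of $b$ from Theorem \ref{thm:ODEb}.
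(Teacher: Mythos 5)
Your proof follows the same overall structure as the paper's: first check that $\D^\star$ satisfies the jump constraint and the integrability requirement of \eqref{set:A}, then verify \eqref{eq:SkcondOC-1} and \eqref{eq:SkcondOC-2}. The treatment of \eqref{eq:SkcondOC-2} mirrors the paper's (continuous part supported on $\{X^{\D^\star}=b(I^{\D^\star})\}$ together with the explicit computation for the only jump, at $t=0$), and your occupation-times argument for \eqref{eq:SkcondOC-1} is in fact more detailed than the paper's ``readily seen''.

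There is, however, a genuine error in the integrability step. You claim that $\D^\star_t$ is dominated pathwise by the Skorokhod reflection $\widehat{\D}_t:=\sup_{s\le t}\big(x+\mu s+\eta W_s - b_\circ\big)^+$ at the \emph{fixed} level $b_\circ$. The inequality goes the wrong way: by Theorem~\ref{thm:ODEb} the boundary $b$ is decreasing with $b(0)=b_\circ$, so the reflection level $b(I^{\D^\star}_t)\le b_\circ$ for all $t$; reflecting at a \emph{lower} barrier produces a \emph{larger} pushing process, hence $\D^\star_t\ge\widehat{\D}_t$, not $\le$. (Concretely, at the first time $X^{\D^\star}$ reaches $b(i)<b_\circ$ one has $\D^\star>0=\widehat{\D}$.) A correct pathwise bound is instead $\D^\star_t\le\sup_{s\le t}\big(x+\mu s+\eta W_s\big)^+$, which follows simply from $X^{\D^\star}_t\ge 0$ on $[0,\tau^{\D^\star}_0]$ and monotonicity of $\D^\star$; this dominating process still yields the required $\E_x\big[\int_0^{\infty}e^{-\rho t}\,\d\D^\star_t\big]<\infty$. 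The paper instead invokes the explicit finiteness of the classical De Finetti value $V_0(x)$ in \eqref{eq:sol-DF0}. Fix the direction (and the level) of the comparison and the rest of your argument goes through.
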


\begin{proof}
We prove the different parts separately. 

\vspace{1mm}
{\it Admissibility.} 
It follows by construction that $\D^{\star}_{0^-}=0$, $\D^{\star}$ is $\mathbb{F}$-adapted, non-decreasing, and right-continuous (with the only potential discontinuity being at initial time) .

Next, we show that $X^{x,\D^\star}_{t^-} - \Delta \D^\star_t \geq 0$ holds true for all $t\in[0,\tau_0^{\D^\star}]$, $\P$-a.s. 
By combining the fact that $\D^\star$ can have a jump potentially only at time $0$ (by construction) and its amplitude is given by $D^\star_0$ in \eqref{OC-time0}, together with the fact that $b(i) > 0$ for all $i\in[0,i^\star]$ thanks to Lemma \ref{lem:i^*}, it is straightforward to conclude the desired inequality from the observation that
$$
X^{x,\D^\star}_{t^-} - \Delta \D^\star_t 
= \begin{cases}
(x-b(i)) \wedge (x-b(i^\star)) >0, &\text{for } t=0 \text{ and } (x,i)\in \mathcal{D}, \\
X^{x,\D^\star}_{t^-} \geq 0, &\text{otherwise}. 
\end{cases}
$$
Finally, we notice due to the non-negativity of $I^{\D^\star}_t$ for all $t \in [0,\tau^{\D^\star}_0]$ that 
\begin{align*}
\E_{x,i}\bigg[\int_0^{\tau_0^{\D^\star}} e^{-\rho t - q I^{\D^\star}_t} \, {\diamond}\, \d \D^\star_t \bigg] 
&\leq \E_{x}\bigg[\int_0^{\tau_0^{\D^\star}} e^{-\rho t} \, \d \D^\star_t \bigg] \\
&\leq \sup_{\D \in \tilde{\mathcal{A}}(x)} \E_{x} \bigg[\int_0^{\tau_0^{\D}} e^{-\rho t} \, \d \D_t \bigg] 
= V_0(x) < \infty ,
\end{align*}
where we used also the fact that $\D^\star \in \mathcal{A}(x,i) \subseteq \tilde{\mathcal{A}}(x)$ and the definitions \eqref{eq:value-DF0} and \eqref{set:Ax} of $V_0$ and $\tilde{\mathcal{A}}(x)$, respectively. 

\vspace{1mm}
{\it Properties \eqref{eq:SkcondOC-1}--\eqref{eq:SkcondOC-2}.} 
It is readily seen by the definition of $\D^{\star}$ that \eqref{eq:SkcondOC-1} holds true. 

It thus remains to show that the two sums in \eqref{eq:SkcondOC-2} are zero.
It follows by the construction in \eqref{OC-time0}--\eqref{eq:candidateOC} that
\begin{align*}
&\D_t(\omega) \text{ is constant }  
\forall \; 0 \leq t \leq \lambda_{1}(\omega) \wedge \tau^{\D^\star}_0(\omega) \text{ such that } X^{\D^\star}_{t-}(\omega)<b(i), \\
&\D_t(\omega) \text{ is trivially  constant } \forall \; \lambda_{k}(\omega) \leq t \leq \gamma_k(\omega) \wedge \tau^{\D^\star}_0(\omega), \\
&\D_t(\omega) \text{ is constant } 
\forall \; \gamma_{k}(\omega) \leq t \leq \lambda_{k+1}(\omega) \wedge \tau^{\D^\star}_0(\omega) \text{ such that } X^{\D^\star}_{t-}(\omega)<b(I^{\D^{\star}}_t(\omega)) = b(I^{\D^{\star}}_{\gamma_{k}}(\omega)),
\end{align*}
hence the first sum in \eqref{eq:SkcondOC-2} is clearly zero. 
For the second sum, notice that $\D^\star$ can have a jump potentially only at time $0$ and its amplitude is given by $\D^\star_0$ in \eqref{OC-time0}, so that 
\begin{align*}
\int_0^{\Delta \D^{\star}_t} \hspace{-2mm} \mathds{1}_{\{X^{\D^\star}_{t^-} -\zeta < b(I^{\D^{\star}}_{t})\}} \d\zeta 
&= \int_{0}^{\Delta \D^{\star}_0} \hspace{-2mm} \mathds{1}_{\{\zeta > x - b(I^{\D^{\star}}_{0})\}} \d\zeta \\
&= \mathds{1}_{\{(x,i) \in \mathcal{D}_1\}} \int_{0}^{x-b(i)} \hspace{-4mm} \mathds{1}_{\{\zeta > x-  b(i) \}} \d\zeta 
+
\mathds{1}_{\{(x,i) \in \mathcal{D}_2\}} \int_{0}^{x-b(i^\star)} \hspace{-4mm} \mathds{1}_{\{\zeta > x-  b(i^\star) \}} \d\zeta
= 0.
\end{align*}
Hence, we conclude that the second sum is also zero, and complete the proof.
\end{proof}

Collecting Theorem \ref{thm:vsolvesHJB} and the previous construction, in light of Theorem \ref{thm:verification} we have the following final theorem.

\begin{theorem} \label{thm:final}
The function $v$ defined by \eqref{eq:v} identifies with the value function $V$ of the singular control problem \eqref{eq:value-div}, i.e.~$V\equiv v$, and the control $\D^\star$ defined by \eqref{OC-time0}--\eqref{eq:candidateOC} is optimal.
\end{theorem}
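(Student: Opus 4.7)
The plan is to invoke the Verification Theorem \ref{thm:verification} with the candidate pair $(v,\D^{\star})$ supplied by \eqref{eq:v} and \eqref{OC-time0}--\eqref{eq:candidateOC}. Two of the four ingredients are already in hand: admissibility of $\D^{\star}$ together with the Skorokhod-type bookkeeping relations \eqref{eq:SkcondOC-1}--\eqref{eq:SkcondOC-2} come from Proposition \ref{prop:OC}, and the fact that $v$ solves the HJB system \eqref{eq:FBP1}--\eqref{eq:FBP6} is exactly Theorem \ref{thm:vsolvesHJB}. It therefore remains to verify (a) that $v \in C^{2,1}(\mathcal{M}^0)$, (b) the integrability \eqref{eq:integrability} and transversality \eqref{eq:transversality} for every $\D \in \mathcal{A}(x,i)$, and (c) the two pointwise optimality identities \eqref{eq:Skh1}--\eqref{eq:Skh2} for $\D^{\star}$.

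For regularity, the $C^2$-smooth-fit of $v$ in $x$ across the free boundary $\{x=b(i)\}$ was built into the derivation via \eqref{eq:smothfit1}--\eqref{eq:smothfit2}, and continuity of $v$ across $\{i=i^{\star}\}$ within $\mathcal{D}_1 \cup \mathcal{D}_2$ is immediate from the formula \eqref{eq:v} and the identity $b(i^{\star})=i^{\star}$. The remaining task is continuity of $v_i$ across these two interfaces. Along $\{x=b(i)\}$ this reduces to the ODE \eqref{eq:FBP5bis}, which was the very equation used to fix the flow $\mathcal{B}$. Along $\{i=i^{\star}\}$ inside the action region, matching the one-sided $i$-derivatives of the $\mathcal{D}_1$-expression and the $\mathcal{D}_2$-expression in \eqref{eq:v} boils down to the explicit slope $b'(i^{\star})=-q\mu/\rho$, which follows from a direct evaluation of \eqref{flow} at $(b,i)=(i^{\star},i^{\star})$ using $\alpha+\beta=-2\mu/\eta^2$ and $\alpha\beta=-2\rho/\eta^2$.

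For the probabilistic estimates, I will first inspect the three pieces of \eqref{eq:v} to establish the uniform bound $0 \le v(x,i) \le C(1+x)$ on $\mathcal{M}^0$, for some $C>0$ depending only on $\rho,\mu,\eta,q$ and $b_\circ$ (use $e^{-qi} \le 1$ and $i^{\star}\le b_\circ$). The pathwise domination $X^{x,\D}_t \le X^{x,0}_t = x+\mu t + \eta W_t$ for every $\D \in \mathcal{S}$, together with the standard estimate $\E\bigl[\sup_{t\ge 0} e^{-\rho t}(1+X^{x,0}_t)\bigr]<\infty$ (valid for any $\rho>0$ and drifted Brownian motion), then yields \eqref{eq:integrability}. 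The same linear growth bound, combined with $\E_{x,i}\bigl[e^{-\rho T}(1+X^{x,0}_T)\bigr]\to 0$ as $T\uparrow\infty$, provides \eqref{eq:transversality} via dominated convergence.

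For the pointwise optimality of $\D^{\star}$: by \eqref{eq:SkcondOC-1} the controlled process lies in $\mathcal{C}$, $\P_{x,i}\otimes \d s$-a.e.\ on $\{s<\tau_0^{\D^\star}\}$, where $v$ satisfies the homogeneous ODE \eqref{eq:FBP1}, and since $\Pi\equiv 0$ this gives \eqref{eq:Skh1}. For \eqref{eq:Skh2}, the continuous part of $\D^{\star}$ is supported on $\{X^{\D^\star}_{t}=b(I^{\D^\star}_t)\}$ (the Skorokhod reflection), and its jump part only occurs at $t=0$ when $(x,i)\in \mathcal{D}$; in both scenarios the process, after the action, sits on the free boundary where $v_x = e^{-qi}=f$ by \eqref{eq:smothfit1}, so the integrand in \eqref{eq:Skh2} vanishes $\d \D^{\star}$-a.e., using exactly \eqref{eq:SkcondOC-2} to handle the jump contribution. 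Theorem \ref{thm:verification} then identifies $v\equiv V$ and proclaims $\D^{\star}$ optimal. The main obstacle in this programme is the $C^{1}$-regularity of $v$ in the $i$-variable across $\{i=i^{\star}\}$: it is the one place where the global structure of the free boundary (namely the exact value of $b'(i^{\star})$ dictated by the Cauchy problem \eqref{eq:ODEb}--\eqref{flow}) feeds into the verification in a non-trivial way.
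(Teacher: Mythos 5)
Your proposal follows the same verification-theorem route as the paper's own proof and reaches the same conclusion, so it is essentially correct. The one genuinely valuable addition is your explicit treatment of the $C^{1}$-regularity of $v$ in the $i$-variable across $\{i=i^{\star}\}$, which the paper passes over silently. Matching the one-sided $i$-derivatives of the $\mathcal{D}_1$- and $\mathcal{D}_2$-expressions in \eqref{eq:v} does indeed reduce to the single scalar condition $b'(i^{\star})=-q\mu/\rho$; and evaluating \eqref{flow} at $(b,i)=(i^{\star},i^{\star})$ gives
\begin{equation*}
\mathcal{B}(i^{\star},i^{\star})=\frac{q}{\alpha\beta}\cdot\frac{\alpha^2-\beta^2}{\beta-\alpha}=-\frac{q(\alpha+\beta)}{\alpha\beta}=-\frac{q\mu}{\rho},
\end{equation*}
using the Vi\`ete relations for the roots of $\tfrac12\sigma^2\theta^2+\mu\theta-\rho=0$, exactly as you observe. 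This confirms that the required matching is automatic from the Cauchy problem \eqref{eq:ODEb}--\eqref{flow}, and it makes the asserted membership $v\in C^{2,1}(\mathcal{M}^0)$ in Theorem~\ref{thm:verification} fully rigorous rather than implicit. The remaining steps (linear growth bound $|v|\le C(1+x)$, pathwise domination $X^{\D}_t\le X^0_t$, and the two Skorokhod identities via Proposition~\ref{prop:OC}) coincide with the paper's argument. In short: same route, but your proof fills a small but non-obvious regularity gap that the paper's proof leaves to the reader.
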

\begin{proof}
In order to prove this result, we rely on the application of the Theorem \ref{thm:verification}.

To that end, since $v$ solves the free-boundary problem \eqref{eq:FBP1}--\eqref{eq:FBP6} thanks to Theorem \ref{thm:vsolvesHJB}, then it immediately verifies the system \eqref{eq:HJB}--\eqref{eq:bdconds2}. 

In order to see the validity of \eqref{eq:integrability}, notice that 
$$
\E_{x,i}\bigg[\sup_{t \in [0,\tau^{\D}_a]} e^{-\int_0^{t} r(X^{\D}_s,I^{\D}_s) \, \d s} \, \big|v\big(X^{\D}_{t},I^{\D}_{t}\big)\big|\bigg] 
\leq \E_{x,i} \Big[\sup_{t \geq 0}  e^{-rt} \, C \big(1 + \big|X^{0}_{t} \big| \big) \Big] 
< + \infty , 
$$
where we used the bound $|v(x,i)| \leq C(1+x)$ from \eqref{eq:v} and the property that $X^\D_t \leq X^0_t$, for all $t\geq 0$ by definition \eqref{XI}. 
Furthermore, similar estimates lead to the verification of \eqref{eq:transversality} as well. 

Finally, given that the process $\D^\star$ satisfies the properties in Proposition \ref{prop:OC}, it is immediate that \eqref{eq:Skh1} and \eqref{eq:Skh2} hold true, which then completes the proof.
\end{proof}

As depicted in Figure \ref{Fig2}, the optimal dividend policy $\D^\star$ defined by \eqref{OC-time0}--\eqref{eq:candidateOC} shows a behaviour which is remarkably different from that of the classical dividend problem with $q=0$, where the optimal policy is triggered by the \emph{constant barrier} $b_o$ and pays just enough dividends to prevent the surplus from entering the region $\{x \in \R :\, x>b_o\}$ (see Section \ref{sec:q=0} for details).
According to \eqref{OC-time0}--\eqref{eq:candidateOC}, when the decision maker has a time-preference rate $q>0$, the optimal dividend payout is instead triggered by the \emph{infimum-dependent} barrier strategy $b(i)$. This critical level is dynamically updated according to the current level of the surplus process' running infimum, whose value in turn depends on the policy in action.
In particular, on the one hand, when the surplus process starts in region $\mathcal{D}_1$, it is optimal to pay a lump-sum dividend of a relatively small size $x-b(i)$ at time $0$ and then pay continuously in a manner such that the process $X^{x,\D}$ is reflected downwards on $b(I^{i,\D})$ according to a Skorokhod reflection (cf.\ \eqref{eq:candidateOC}). 
On the other hand, if $x\in \mathcal{D}_2$, it is optimal to pay a lump-sum dividend of a relatively large size $x-b(i^*)$ at time $0$, thus $(X^{x,\D}, I^{i,\D})$ enters 
the region $\mathcal{C} \cup \mathcal{D}_1$ and proceeds as above.

Nevertheless, it turns out that the solution to the problem with multiplicative time-preference impact (i.e.\ with $q>0$) converges to the one of the classical De Finetti's optimal dividend problem (see Section \ref{sec:q=0}) as $q\downarrow 0$. This is proved in the next proposition, 
where we denote by $b(\cdot)=b(\cdot;q)$ the solution the ODE \eqref{eq:ODEb} and by $V(\cdot) = V(\cdot;q)$ the value functions \eqref{eq:valuefctmuneg} and \eqref{eq:v} to stress their dependence on the parameter $q$.

\begin{proposition}
Recall the definition \eqref{eq:bo} of $b_\circ$ and the value function expressions \eqref{eq:V0muneg} and \eqref{eq:sol-DF0} of $V_0(\cdot)$ when $q=0$. 
For any $q>0$, recall also the ODE \eqref{eq:ODEb} satisfied by the function $b(\cdot;q)$ and the value function expressions \eqref{eq:valuefctmuneg} and \eqref{eq:v} of $V(\cdot,\cdot;q)$. 
Then, we have 
\begin{align} \label{eq:stabilityb}
\lim_{q \downarrow 0}b(i;q) &= b_o, \quad \forall \; i\geq 0, \\ \label{eq:stabilityV} 
\lim_{q \downarrow 0}V(x,i;q) &= V_0(x), \quad \forall \; (x,i) \in \mathcal{M}^0.
\end{align}
\end{proposition}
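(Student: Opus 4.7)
My strategy is to prove \eqref{eq:stabilityb} via continuous dependence of ODE solutions on the parameter $q$, then to deduce the auxiliary fact $i^\star(q) \to b_\circ$, and finally to obtain \eqref{eq:stabilityV} by a case-by-case passage to the limit in the explicit formulas \eqref{eq:valuefctmuneg} and \eqref{eq:v}. Nothing more delicate than the classical continuity theorem for ODEs and elementary Taylor expansions should be needed.

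\textbf{Step 1: convergence of the boundary.} View the right-hand side $\mathcal{B}(b,i;q)$ of \eqref{flow} as depending on the parameter $q\geq 0$. On any compact subset of $D_b$ the denominator $\beta e^{\alpha(i-b)}-\alpha e^{\beta(i-b)}$ stays bounded away from zero, so $(b,i,q)\mapsto \mathcal{B}(b,i;q)$ is jointly $C^1$ there, with $\mathcal{B}(b,i;0)\equiv 0$. Because the initial datum $b(0)=b_\circ$ in \eqref{eq:ODEb} does not depend on $q$, the degenerate problem $q=0$ has the constant solution $b(\,\cdot\,;0)\equiv b_\circ$. Standard continuous dependence on parameters for ODEs (e.g.~\cite[Theorem 3.2]{Hartman}, in the spirit of the references already invoked in Theorem~\ref{thm:ODEb}) then yields $b(i;q)\to b_\circ$ as $q\downarrow 0$, uniformly on compact intervals $[0,N]$, which is \eqref{eq:stabilityb}.

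\textbf{Step 2: convergence of $i^\star(q)$.} By Lemma~\ref{lem:i^*}, $i^\star(q)\in(0,b_\circ)$ and $b(i;q)>i$ iff $i<i^\star(q)$. Fix $\varepsilon\in(0,b_\circ)$. By Step~1, $b(b_\circ-\varepsilon;q)\to b_\circ>b_\circ-\varepsilon$, so $b(b_\circ-\varepsilon;q)>b_\circ-\varepsilon$ for all $q$ sufficiently small, forcing $i^\star(q)>b_\circ-\varepsilon$. Combined with $i^\star(q)<b_\circ$, we obtain $i^\star(q)\to b_\circ$.

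\textbf{Step 3: convergence of $V$.} When $\mu\leq 0$, rewrite \eqref{eq:valuefctmuneg} as
\begin{equation*}
V(x,i;q)=e^{-qi}(x-i)+\frac{1-e^{-qi}}{q}\xrightarrow[q\downarrow 0]{} (x-i)+i=x=V_0(x).
\end{equation*}
When $\mu>0$, fix $(x,i)\in \mathcal{M}^0$ and identify its eventual region using Steps~1--2. If $x<b_\circ$, then $(x,i)\in\mathcal{C}(q)$ for small $q$ and the first line of \eqref{eq:v}, together with $e^{-qi}\to 1$ and $b(i;q)\to b_\circ$, produces exactly the top branch of \eqref{eq:sol-DF0}. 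If $x\geq b_\circ$ and $i<b_\circ$, then $i<i^\star(q)$ for small $q$ by Step~2, so $(x,i)\in\mathcal{D}_1(q)$ and the second line of \eqref{eq:v} gives $V(x,i;q)\to x-b_\circ+\mu/\rho=V_0(x)$. If $i\geq b_\circ$, then $i\geq i^\star(q)$ for all small $q$, so $(x,i)\in\mathcal{D}_2(q)$, and rewriting the third line of \eqref{eq:v} as
\begin{equation*}
V(x,i;q)=e^{-qi}(x-i)+\frac{e^{-qi^\star(q)}-e^{-qi}}{q}+\frac{\mu}{\rho}e^{-qi^\star(q)},
\end{equation*}
the mean value theorem represents the middle quotient as $e^{-q\xi_q}(i-i^\star(q))$ with $\xi_q\in(i^\star(q),i)$, which tends to $i-b_\circ$ by Step~2, so the full expression converges to $(x-i)+(i-b_\circ)+\mu/\rho=V_0(x)$. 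The boundary $x=b_\circ$ is handled by the continuity of $V_0$ and of $V(\,\cdot\,,\,\cdot\,;q)$ across the free boundary (which is built into \eqref{eq:smothfit1}--\eqref{eq:smothfit2}).

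The only genuinely delicate step is the $\mathcal{D}_2$ case in Step~3, where the apparent singularity $1/q$ is cured by combining the mean value expansion with the critical property $i^\star(q)\to b_\circ$ established in Step~2; everything else is direct substitution using \eqref{eq:stabilityb}.
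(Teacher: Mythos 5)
Your proof is correct and follows essentially the same route as the paper: continuous dependence of ODE solutions on the parameter $q$ for \eqref{eq:stabilityb}, the auxiliary limit $i^\star(q)\to b_\circ$, and then direct passage to the limit in the explicit formulas region by region. The main difference is that you spell out the details the paper leaves implicit as "standard limit arguments", notably the monotonicity argument for $i^\star(q)\to b_\circ$ and the mean value theorem step that tames the apparent $1/q$ singularity in the $\mathcal{D}_2$ branch; this is a useful unpacking rather than a new method.
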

\begin{proof}
The limit in \eqref{eq:stabilityb} follows by classical results on continuous dependence of unique solutions to ODEs with respect to a parameter; see, e.g., \cite[Chapter~V, Theorem~2.1]{Hartman}.

In the case of $\mu\leq0$, the result in \eqref{eq:stabilityV} is readily obtained by taking limits as $q \downarrow 0$ in \eqref{eq:valuefctmuneg} to obtain \eqref{eq:V0muneg}.

In the case of $\mu>0$, when taking limits in \eqref{eq:v} as $q \downarrow 0$, one first notices that $\mathcal{D}_1 \cup \mathcal{D}_2$ converges to the region $\{x\in\R:\, x \geq b_o\}$ due to \eqref{eq:stabilityb}. 
Also, the critical value $i^{\star}=i^{\star}(q)$ obtained in Lemma \ref{lem:i^*} satisfies $\lim_{q\downarrow 0} i^{\star}(q) = b_\circ$ again due to  \eqref{eq:stabilityb}. 
Hence, it is a matter of standard limit arguments to show that both of the expressions of $V$ in $\mathcal{D}_1$ and $\mathcal{D}_2$ (cf.\ \eqref{eq:v} and Theorem \ref{thm:final}) converge to $x-b_o + \frac{\mu}{\rho}$, which is indeed the expression of $V_0$ in the action region $\{x\in\R:\, x \geq b_o\}$ in \eqref{eq:sol-DF0}. 
Finally, the convergence of $V$ to $V_0$ as $q \downarrow 0$ within the region $\mathcal{C}$ is obvious, again due to \eqref{eq:stabilityb}.
\end{proof}

\begin{remark}
\label{rem:extension}
Notice that, there is a plethora of applications that would fit the general framework of Section \ref{sec:problem}. 
We present below another variant of the classical dividend problem, inspired by \cite{Schmidli}. 
According to \cite{Schmidli}, the company pays dividends to shareholders, discounts costs and profits at a deterministic and constant rate $\rho$, and even though the company is not forced into bankruptcy when its surplus becomes negative, it incurs running penalty payments over time, depending on the current level of the surplus. These penalty payments prevent losses from growing without bound and can be interpreted as a preference measure or as costs associated with negative capital.

A possible extension of this formulation would be the introduction of an additional, transitional cost for diminishing the past worst performance $I^D$ of the company. This cost would penalise the company even when the surplus process is positive, by being activated whenever the past worst performance $I^D$ of the company reaches a new minimum. 
This would represent an additional precautionary, preference measure, with  penalty payments that would prevent the surplus process from entering the negative region.
In this formulation, the company aims to solve
\begin{equation*}
\sup_{\D \in \mathcal{D}(x,i)}\E_{x,i}\bigg[\int_{0}^{\infty} e^{-\rho t} \Big(\d \D_t -  \phi(X^{\D}_t) \d t + h(I^{\D}_t) \  {\scriptstyle{\Box}}\ \d I^{\D}_t \Big) \bigg],
\end{equation*}
where 
\begin{eqnarray*}
& \hspace{0.2cm}\mathcal{D}(x,i) := 
\bigg\{ \D \in \mathcal{S}:\, X^{x,\D}_{t} \in \mathcal{O},\,\, \displaystyle \E_{x,i}\bigg[\int_0^{\infty} e^{-\rho t} \Big(\big|\phi\big(X^{\D}_t)\big| \, \d t +  \d \D_t + \big|h\big(I^{\D}_t\big)\big| \, {\scriptstyle{\Box}}\, \d I^{\D}_t\Big)\bigg] < \infty \bigg\}, \nonumber
\end{eqnarray*}
$\phi$ is a non-increasing function that converges to zero as the argument diverges to infinity (as in \cite{Schmidli}), and $h$ is a suitable non-negative Borel-measurable cost function.
This variant of the dividend problem fits in the general formulation of the stochastic control problem in Section \ref{problem} and also involves the integral with respect to the infimum process introduced in \eqref{defintegral-I}.
This novel problem can also be studied using the methodology presented in this paper; its rigorous treatment is left for future research.
\end{remark}


\section{Integrals for state-spaces involving the running supremum process}
\label{sec:sup}

In this section, we aim at presenting the novel framework of this paper in the setup where the underlying two-dimensional process is given by a diffusion and its running supremum. 
The aim is to show how the integrals with respect to the controlled version of the diffusion and with respect to the controlled running maximum process of the diffusion can be obtained via the Definitions \ref{int:D}--\ref{int:I}. 
These are the novel integral operators that are consistent with the Hamilton-Jacobi-Bellman equation and could be used in the study of two-dimensional singular control problems involving the controlled diffusion and its controlled running maximum process (in the spirit of Section \ref{problem}).

We consider on $(\Omega,\mathcal{F},\P)$ the process 
\begin{equation}
\label{defY}
Y_t^{y,D} := -X_t^{x,D}, \quad \text{for} \quad y:=-x \quad \text{and all} \quad  t \geq 0, 
\end{equation}
satisfying in view of \eqref{stateX} the SDE 
\begin{equation}
\label{stateY}
\d Y_t^{y,D} = \tilde b(Y_t^{y,D}) \d t + \tilde \ss (Y_t^{y,D}) \d B_t + \d\D_t, \quad Y_{0^-}^{y,D}  = y \in \mathcal{I} := (\underline y, \overline y),
\end{equation}
where $- \infty \leq \underline y := - \overline x < \overline y := - \underline x \leq + \infty$ and $\tilde b(y) := - b(-y)$ and $\tilde \ss(y) := - \ss(-y)$ which clearly satisfy Assumption \ref{A1}, so that for any $y \in \mathcal{I}$ and any $\D \in \mathcal{S}$, there exists a unique strong solution $Y^{y,\D}$ to \eqref{stateY}.
Note that, by exerting control via the process $\D$, the decision maker increases the level of $Y^{y,\D}$.

Using now the definition \eqref{defY} of $Y^{y,\D}$ in the definition \eqref{eq:infim} of the running infimum $I^{i,\D}$ of $X^{x,\D}$, we notice that 
\begin{equation} \label{sup=inf}
I^{i,\D}_t 
= i \wedge \inf_{0\leq u \leq t} (-Y^{y,\D}_u)
= - \big( -i \vee \sup_{0\leq u \leq t} Y^{y,\D}_u \big)
= - S^{s,\D}_t, 
\quad \text{for} \quad s:=-i \quad \text{and} \quad t \geq 0,
\end{equation}
where we define for any given $y, s \in \mathcal{I}$ such that $y \leq s$ and $\D \in \mathcal{S}$, the running supremum $S^{s,\D}$ of $Y^{y,\D}$ (also controlled by $\D$) by 
\begin{equation} \label{eq:sup}
S^{s,\D}_t 
:= s \vee \sup_{0\leq u \leq t} Y^{y,\D}_u, \quad \text{for} \quad t \geq 0.
\end{equation}

In light of the definition \eqref{defY} and the equation \eqref{sup=inf}, the two-dimensional controlled process $(Y^{y,\D}, S^{s,\D})$ therefore satisfies
\begin{equation} \label{eq:YS}
(Y^{y,\D}_t, S^{s,\D}_t) = (- X^{x,\D}_t, - I^{i,\D}_t)
\quad \forall\;  t\geq 0, \quad \P-\text{a.s.},
\end{equation}
and its associated state-space is consequently given by
\begin{equation}
\label{eq:setN}
{\mathcal{N}} := \{(y,s) \in \mathcal{I} \times \mathcal{I}:\, y \leq s\}.
\end{equation}
Notice that $S^{s,\D}$ is a non-decreasing process, which is increasing on the diagonal of its state-space 
$$
\partial \mathcal{N} 
:= \{(y,s) \in \mathcal{I} \times \mathcal{I} :\, y = s\} 
$$
and remains constant while the process $(Y^{y,\D}, S^{s,\D})$ is away from the diagonal, namely in 
$\mathcal{N} \setminus \partial \mathcal{N}$. 
Combining the above with Scenarios (a)--(c) in Section \ref{sec:processsetting} yields that the two-dimensional process $(Y^{y,\D}, S^{s,\D})$ can move 
{\it upwards in the $y$-axis} while maintaining the same coordinate value in the $s$-axis, 
or 
{\it upwards in a 45$^\circ$-angle direction along the diagonal $\partial \mathcal{N}$}, i.e.~in the north-east direction,  or 
{\it upwards in an `inverse hockey-stick' direction, firstly in the $y$-axis and subsequently in a 45$^\circ$-angle along the diagonal $\partial \mathcal{N}$}, as a combination of the former two movements.

It is therefore a direct consequence of using \eqref{eq:YS} in Definitions \ref{int:D}--\ref{int:I} to obtain the following
expressions for the integral operators that address the aforementioned movements of $(Y^{y,\D}, S^{s,\D})$ induced by a control process $\D$ and are consistent with the Hamilton-Jacobi-Bellman equation. 
For the stochastic process $(Y^{y,\D}, S^{s,\D})$ defined by \eqref{stateY} and \eqref{eq:sup} and its state-space $\mathcal{N}$ given by \eqref{eq:setN}, for any $T>0$ and generic functions $r, g: \mathcal{N} \to \R$ (such that the following quantities are well-defined), we have 
\begin{align} \label{defintegralY}
\begin{split}
&\int_0^{T} e^{-\int_0^t r(Y^{\D}_z, S^{\D}_z)\d z} \, g\big(Y^{\D}_t, S^{\D}_t\big) \ {\diamond}\ \d \D_t \\
& =\int_{0}^{T} e^{-\int_0^t r(Y^{\D}_z, S^{\D}_z)\d z} \, g\big(Y^{\D}_t, S^{\D}_t\big)\ \d \D^c_t \\
& \hspace{0.5cm} 
+ \sum_{t \leq T:\,\, \Delta \D_t \neq 0} e^{-\int_0^t r(Y^{\D}_z, S^{\D}_z)\d z} \int_0^{(S^{\D}_{t^-} - Y^{\D}_{t^-}) \wedge \Delta \D_t} g\big(Y^{\D}_{t^-}+u, S^{\D}_{t^-}\big)\ \d u \\
& \hspace{0.5cm} 
+ \sum_{t \leq T:\,\, \Delta \D_t \neq 0} e^{-\int_0^t r(X^{\D}_z, I^{\D}_z)\d z} \int_{S^{\D}_{t^-} - Y^{\D}_{t^-}}^{\Delta \D_t} \mathds{1}_{\{\Delta \D_t > S^{\D}_{t^-} - Y^{\D}_{t^-}\}} \, g\big(Y^{\D}_{t^-}+u, S^{\D}_{t^-}+u\big)\ \d u. 
\end{split}
\end{align}     
and 
\begin{align}
\label{defintegral-S}
\begin{split}
& \int_0^{T} e^{-\int_0^t r(Y^{\D}_z, S^{\D}_z)\d z} \, g\big(Y^{\D}_t, S^{\D}_t\big) \ {\scriptstyle{\Box}}\ \d S^{\D}_t \\
&= \int_{0^-}^{T} e^{-\int_0^t r(Y^{\D}_z, S^{\D}_z)\d z} \, g\big(Y^{\D}_t, S^{\D}_t\big)\ \d S^{\D,c}_t \\
& \hspace{0.5cm} + \sum_{t \leq T:\,\, \Delta \D_t \neq 0} e^{-\int_0^t r(Y^{\D}_z, S^{\D}_z)\d z} 
\int_{S^{\D}_{t^-} - Y^{\D}_{t^-}}^{\Delta \D_t} \mathds{1}_{\{\Delta \D_t > S^{\D}_{t^-} - Y^{\D}_{t^-}\}} \, g\big(Y^{\D}_{t^-}+u, S^{\D}_{t^-}+u\big)\ \d u,
\end{split}
\end{align}
where $S^{\D,c}$ is the continuous part of the non-decreasing process $S^{\D}$ with the induced (random) measure $\d S^{\D,c}_\cdot(\omega)$ having support on $\{t\geq0 : Y^{\D}_t(\omega) = S^{\D}_t(\omega)\}$, $\omega \in \Omega$.


\vspace{0.4cm}

\textbf{Acknowledgements.} Supported  by the Deutsche Forschungsgemeinschaft (DFG, German Research Foundation) - Project-ID 317210226 - SFB 1283. We thank the anonymous associate editor and referees for their valuable comments and suggestions on an earlier version of this paper.



\begin{thebibliography}{99}

\bibitem{AlMotairiZervos} Al Motairi, H., \& Zervos, M.\ (2017). Irreversible capital accumulation with economic impact. {\it Applied Mathematics \& Optimization}, \textbf{75}, 525--551.

\bibitem{AsmussenTaksar} Asmussen, S., \& Taksar, M.\ (1997). Controlled diffusion models for optimal dividend pay-out. {\it Insurance: Mathematics \& Economics}, \textbf{20}(1), 1--15.

\bibitem{Bandini-etal} Bandini, E., De Angelis, T., Ferrari, G., \& Gozzi, F.\ (2022). Optimal dividend payout under stochastic discounting. {\it Mathematical Finance}, \textbf{32}(2), 627--677.

\bibitem{Belarevetal1}
Becherer, D., Bilarev, T. \& Frentrup, P.\ (2018). Optimal asset liquidation with multiplicative transient price impact. {\it Applied Mathematics \& Optimization}, {\bf 78}(3), 643--676.

\bibitem{Belarevetal2} 
Becherer, D., Bilarev, T., \& Frentrup, P.\ (2019). Stability for gains from large investors’ strategies in $M_1/J_1$ topologies. {\it Bernoulli}, \textbf{25}(2), 1105--1140.

\bibitem{Bovo-etal} Bovo, A., De Angelis, T., \& Issoglio, E.\ (2025). Variational inequalities on unbounded domains for zero-sum singular-controller vs.\ stopper games. {\it Mathematics of Operations Research}, \textbf{50(1)}, 277--312.

\bibitem{DGM24}
D\'ecamps, J.P., Gensbittel, F., \& Mariotti, T.\ (2024).
Investment timing and technological breakthroughs.
{\it Mathematics of Operations Research}, \textbf{50}(2), 1478--1513.

\bibitem{Cadenillas} Sotomayor, L.R., \& Cadenillas, A.\ (2011). Classical and singular stochastic control for the optimal dividend policy when there is regime switching. {\it Insurance: Mathematics \& Economics}, \textbf{48}(3), 344--354.

\bibitem{DammannFerrari} Dammann, F., \& Ferrari, G.\ (2023). Optimal execution with multiplicative price impact and incomplete information on the return. {\it Finance \& Stochastics}, \textbf{27}(3), 713--768.

\bibitem{Dammannetal} Dammann, F., Rodosthenous, N., \& Villeneuve, S.\ (2024). A stochastic non-zero-sum game of controlling the debt-to-GDP ratio. {\it Applied Mathematics \& Optimization}, \textbf{90}(3), 1--42.

\bibitem{DeAngelisEkstrom} De Angelis, T., \& Ekstr\"om, E.\ (2017). The dividend problem with a finite horizon. {\it The Annals of Applied Probability}, \textbf{27}(6), 3525--3546.

\bibitem{DeAngelisEkstromOlofsson} De Angelis, T., Ekstr\"om, E., \& Olofsson, M.\ (2024). The maximality principle in singular control with absorption and its applications to the dividend problem. {\it SIAM Journal on Control and Optimization}, \textbf{62(1)}, 91--117.

\bibitem{DellacherieMeyer} Dellacherie, C., \& Meyer, P.A.\ (1982). {\it Probabilities and Potential B}. North-Holland Mathematics Studies.

\bibitem{DenisOsobov} Denis, D.J., \& Osobov, I.\ (2008). Why do firms pay dividends? International evidence on the determinants of dividend policy. {\it Journal of Financial Economics}, \textbf{89}(1), 62--82.

\bibitem{DianettiFerrari} Dianetti, J., \& Ferrari, G.\ (2023). Multidimensional singular control and related Skorokhod problem: Sufficient conditions for the characterization of optimal controls. {\it Stochastic Processes and their Applications}, \textbf{162}, 547--592.

\bibitem{Ekstrom-etal} Ekstr\"om, E., Kitapbayev, Y., Milazzo, A., \& Tolonen-Weckstr\"om, T.\ (2024). An irreversible investment problem with a learning-by-doing feature. ArXiv preprint arXiv:2406.16493.


\bibitem{FamaFrench} Fama, E.F., \& French, K.R.\ (2001). Disappearing dividends: changing firm characteristics or lower propensity to pay? {\it Journal of Financial Economics}, \textbf{60}(1), 3--43.

\bibitem{Ferrari} Ferrari, G.\ (2019). On a class of singular stochastic control problems for reflected diffusions. {\it Journal of Mathematical Analysis \& Applications}, \textbf{473}(2), 952--979.

\bibitem{FerrariSchuhmann} Ferrari, G., \& Schuhmann, P.\ (2019). An optimal dividend problem with capital injections over a finite horizon. {\it SIAM Journal on Control \& Optimization}, \textbf{57}(4), 2686--2719.

\bibitem{Ferrari-etal} Ferrari, G., Schuhmann, P., \& Zhu, S.\ (2022). Optimal dividends under Markov-modulated bankruptcy level. {\it Insurance: Mathematics \& Economics}, \textbf{106}, 146--172.

\bibitem{Fleming-Soner} Fleming, W., \& Soner H.M.\ (2006). {\it Controlled Markov Processes and Viscosity Solutions}, 2nd ed. Stoch.\ Model.\ Appl.\ Probab.\ \textbf{25}. Springer, New York, 2006.

\bibitem{GapRod14} 
Gapeev, P.V., \& Rodosthenous, N.\ (2014). Optimal stopping problems in diffusion-type models with running maxima and drawdowns. {\it Journal of Applied Probability}, {\bf 51}(3), 799--817.

\bibitem{GapRod16a} 
Gapeev, P.V., \& Rodosthenous, N.\ (2016). Perpetual American options in diffusion-type models with running maxima and drawdowns. {\it Stochastic Processes \& their Applications}, {\bf 126}(7), 2038--2061.

\bibitem{GapRod16b} 
Gapeev, P.V., \& Rodosthenous, N.\ (2016). On the drawdowns and drawups in diffusion-type models with running maxima and minima. {\it Journal of Mathematical Analysis \& Applications}, {\bf 434}(1), 413--431.

\bibitem{GHP13}
Glover, K., Hulley, H., \& Peskir, G.\ (2013).
Three-dimensional Brownian motion and the golden ratio rule.
{\it Annals of Applied Probability}, \textbf{23}, 895--922.

\bibitem{GuoZerv10} 
Guo, X., \& Zervos, M.\ (2010). $\pi$ options. {\it Stochastic Processes \& their Applications}, {\bf 120}(7), 1033--1059.

\bibitem{GZ15} 
Guo, X., \& Zervos, M.\ (2015). Optimal execution with multiplicative price impact. {\it SIAM Journal on Financial Mathematics} {\bf 6},
281--306.

\bibitem{Hartman}
Hartman, P.\ (2002). {\it Ordinary differential equations}. SIAM. 

\bibitem{HMP19} Hern\'andez‐Hern\'andez, D., Moreno‐Franco, H.A. \& P\'erez, J.L.\ (2019). Periodic strategies in optimal execution with multiplicative price impact. {\it Mathematical Finance}, {\bf 29}(4), 1039--1065.

\bibitem{JeanblancShyriaev}
Jeanblanc-Picqu\`e, M., \& Shiryaev, A.N.\ (1995). Optimization of the flow of dividends. {\it Matematicheskikh Nauk}, {\bf 50}(2), 25-46.

\bibitem{Kelbert-Moreno} Kelbert, M., \& Moreno-Franco, H.A.\ (2019). HJB equations with gradient constraints associated to controlled jump-diffusion processes. {\it SIAM Journal on Control and Optimization}, \textbf{57(3)}, 2185--2213.

\bibitem{Pistorius} 
Jiang, Z., \& Pistorius, M.\ (2012). Optimal dividend distribution under Markov regime switching. {\it Finance \& Stochastics}, \textbf{16}, 449--476.

\bibitem{KyprOtt14}
Kyprianou, A., \& Ott, C.\ (2014). A capped optimal stopping problem for the maximum process. {\it Acta Applicandae Mathematicae}, {\bf 129}, 147--174.

\bibitem{Liu-etal} Liu, Z., Liang, G., \& Zervos, M.\ (2025). Ergodic singular stochastic control motivated by the optimal sustainable exploitation of an ecosystem. {\it SIAM Journal on Control and Optimization}, {\bf 63}(3), 2029--2052.

\bibitem{LokkaZervos}
L{\o}kka, A. \& Zervos, M.\ (2008). Optimal dividend and issuance of equity policies in the presence of proportional costs. {\it Insurance: Mathematics \& Economics} {\bf 42}(3),
954--961.

\bibitem{LonZervos} Lon, P.C., \& Zervos, M.\ (2011). A model for optimally advertising and launching a product. {\it Mathematics of Operations Research}, {\bf 36}(2), 363--376.

\bibitem{MehriZervos} Mehri, A., \& Zervos, M.\ (2007). A model for reversible investment capacity expansion. {\it SIAM Journal on Control and Optimization}, \textbf{46(3)}, 839--876.

\bibitem{Ott13}
Ott, C.\ (2013). Optimal stopping problems for the maximum process with upper and lower caps. {\it Annals of Applied Probability}, {\bf 23}(6), 2327--2356.

\bibitem{Piccinini}
Piccinini, L.C., Stampacchia, G. \& Vidossich, G.\ (1984). {\it Ordinary differential equations in $\mathbb{R}^n$}. Springer, New York.

\bibitem{Peskir98}
Peskir, G.\ (1998). Optimal stopping of the maximum process: The maximality principle. {\it The Annals of Probability}, {\bf 26}(4), 1614--1640.

\bibitem{Protter} Protter, P. E.\ (2005). {\it Stochastic integration and differential equations}. Springer Berlin-Heidelberg.

\bibitem{RadnerShepp} 
Radner, R., \& Shepp, L.\ (1996). Risk vs.\ profit potential: A model for corporate strategy. {\it Journal of Economic Dynamics \& Control}, \textbf{20}(8), 1373--1393.

\bibitem{RodZerv17}
Rodosthenous, N., \& Zervos, M.\ (2017). Watermark options. {\it Finance \& Stochastics}, {\bf 21}, 157--186.

\bibitem{RZ20}
Rodosthenous, N., \& Zhang, H.\ (2020).
When to sell an asset amid anxiety about drawdowns. {\it Mathematical Finance}, \textbf{30}, 1422--1460.

\bibitem{SheppShir93} 
Shepp, L., \& Shiryaev, A.N.\ (1993). The Russian option: reduced regret. {\it Annals of Applied Probability}, {\bf 3}, 631--640. 

\bibitem{Schmidli} Vierk\"otter, M., \& Schmildi, H.\ (2017). Onoptimal dividends with exponential and linear penalty payments. {\it Insurance: Mathematics and Economics}, \textbf{72}, 265--270.

\bibitem{Zhu} Zhu, H.\ (1992). Generalized solution in singular stochastic control: the nondegenerate problem. {\it Applied Mathematics \& Optimization}, \textbf{25}, 225--245.

\end{thebibliography}
\end{document}